\documentclass{article}
\usepackage[noadjust]{cite}
\usepackage[title]{appendix}
\usepackage{etex}
\usepackage{xcolor}
\usepackage{amsthm}
\usepackage{amsfonts}
\usepackage{amssymb}
\usepackage{amsgen}
\usepackage{amsmath}
\usepackage{amsopn}
\usepackage{verbatim}
\usepackage{xypic}
\usepackage{pgf}
\usepackage{xspace}
\usepackage{multicol}
\usepackage{makeidx}
\usepackage{eepic}
\usepackage{upref}
\usepackage{pgf}
\usepackage{tikz}
\usepackage[normalem]{ulem}
\usepackage{shuffle,yfonts}
\DeclareFontFamily{U}{shuffle}{}
\DeclareFontShape{U}{shuffle}{m}{n}{ <-8>shuffle7 <8->shuffle10}{}

\allowdisplaybreaks

\newcommand{\FES}{\mathsf {FES}}
\newcommand{\FMZV}{\mathsf {FMZ}}

\newcommand{\ES}{\mathsf {ES}}
\newcommand{\MZV}{\mathsf {MZ}}

\newcommand{\sha}{\shuffle}
\newcommand{\fkeqv}{{\text{``$\equiv$''}}}
\newcommand{\fkeq}{{\text{``$=$''}}}

\newcommand{\Sy}{{\mathcal S}}

\newcommand{\gb}{\beta}

\newcommand\ta{{\texttt{a}}}
\newcommand\tb{{\texttt{b}}}
\newcommand\tc{{\texttt{c}}}
\newcommand{\tq}{{\texttt{q}}}

\newcommand\eps{{\varepsilon}}

\newcommand{\bfs}{{\boldsymbol{\sl{s}}}}

\newcommand\bfeps{{\boldsymbol \varepsilon}}

\newcommand{\calA}{\mathcal{A}}

\newcommand{\calP}{\mathcal{P}}
\newcommand{\eff}{\text{\rm{ef}}}

\allowdisplaybreaks

\newcommand{\N}{\mathbb{N}}
\newcommand{\Z}{\mathbb{Z}}
\newcommand{\Q}{\mathbb{Q}}

\newcommand{\ol}{\overline}

\def\ppmod#1{{\ (\rm{mod}\ 2)}}

\theoremstyle{plain}
\newtheorem{thm}{Theorem}[section]
\newtheorem{lem}[thm]{Lemma}
\newtheorem{ques}[thm]{Question}
\newtheorem{conj}[thm]{Conjecture}

\newtheorem{cor}[thm]{Corollary}
\newtheorem{prop}[thm]{Proposition}

\theoremstyle{definition}
\newtheorem{defn}{Definition}[section]
\newtheorem{conj-defn}{Definition-Conjecture}[section]
\newtheorem{rem}[thm]{Remark}
\newtheorem{eg}[thm]{Example}

\newtheorem{KeyHyp}[thm]{Key Hypothesis}

\begin{document}
\title{Effective Euler Sums}

\author{Jianqiang Zhao\footnote{Email: zhaoj@ihes.fr.}\\
\ \\
Department of Mathematics, The Bishop's School, La Jolla, CA 92037, USA}

\date{}

\maketitle

\medskip
\noindent
\textbf{Abstract.} Euler sums, also called alternating multiple zeta values (MZVs), have been shown to play important roles in many research areas in mathematics and theoretical physics. Motivated by a similar conjecture for MZVs by Kaneko and Zagier, the author proposed a conjectural isomorphism between the space of finite Euler sums and the space of classical ones modulo $\zeta(2)$-products. In this paper, we explicitly construct those finite elements (for which we call effective Euler sums) that correspond to the classical Euler sums under this conjecture in depth one and two. In the appendix, we offer another group of plausible candidates of effective Euler sums when depth is two and weight is even, by extending the heuristic argument of Kaneko and Zagier for the double zeta case. Kina recently obtained independently some the same results in the MZV setting via a completely different approach. Using one of his results we prove in the appendix that Kaneko and Zagier's heuristically defined effective double zeta values coincides with ours.

\medskip
\noindent
\textbf{Keywords.}
(finite) Euler sums; (finite) multiple zeta values; parity principle.

\medskip
\noindent
\textbf{2020 Mathematics Subject Classification.} 11M32; 11B68; 68W30.

\section{Introduction, Motivation, and Terminology}
\subsection{Multiple zeta values.}
Multiple zeta values (MZVs) occupy a remarkable position at the intersection of number theory, combinatorics, algebra, and mathematical physics. For $d$-tuples of positive integers $\bfs=(s_1,\dots,s_d)\in\N^d$, with $s_1\geq 2$, the \emph{multiple zeta value}
\begin{equation}\label{equ:MZVdefn}
\zeta(\bfs)=\sum_{n_1>\cdots>n_d>0} \frac{1}{n_1^{s_1}\cdots n_d^{s_d}}
\end{equation}
is a seemingly simple nested sum whose algebraic structure is nonetheless extraordinarily rich. Kontsevitch observed that they were even equipped with
iterated integral expressions
\begin{equation}\label{equ:MZVintegral}
\zeta(\bfs)=\int_0^1 \ta^{s_1-1} \tb\cdots \ta^{s_d-1} \tb,
\end{equation}
where the 1-forms
\begin{equation*}
    \ta=\frac{dt}{t},\quad   \tb=\frac{dt}{1-t}.
\end{equation*}
The \emph{weight} $s_1+\cdots+s_d$ and the \emph{depth} $d$ provide a natural bigrading, while the two fundamental products, namely the stuffle product arising from \eqref{equ:MZVdefn} and the shuffle product arising from \eqref{equ:MZVintegral} via Chen's theory iterated integrals explained in \cite{KTChen1971,KTChen1977}, impose a large and highly nontrivial system of relations among these numbers. The study of these relations has led to a rich theory involving mixed Tate motives, periods, associators, modular forms, and Galois actions. See \cite{Brown2012,GanglKaZa2006,Glanois2015,Kemp2026,Racinet2002,Zagier1994}.

\subsection{Euler sums.}
A natural enlargement of the classical MZV algebra is obtained by allowing alternating signs in the summand. The resulting numbers are usually called Euler sums or alternating MZVs. More precisely, for $(s_1,\dots,s_d)\in\N^d$ and $(\eps_1,\dots,\eps_d)\in\{\pm 1\}^d$, we define the \emph{Euler sums} by
\begin{equation*}
\zeta(s_1,\dots,s_d;\eps_1,\dots,\eps_d):=\sum_{n_1>\cdots>n_d>0}\frac{\eps_1^{n_1}\cdots\eps_d^{n_d}}{n_1^{s_1}\cdots n_d^{s_d}}.
\end{equation*}
Already at depth one, these include the alternating zeta values
\begin{equation}\label{equ:ESdepth1}
\zeta(\bar{s})=\zeta(s;-1)=\sum_{n\geq1}\frac{(-1)^{n}}{n^s}=(2^{1-s}-1)\zeta(s).
\end{equation}
Here and throughout the paper, we suppress the signs in Euler sums by putting a bar on top of the corresponding $s_j$ if $\eps_j=-1$.

Note, however, at higher depth the situation becomes considerably subtler and an Euler sum usually cannot be expressed as a $\Q$-linear combination of MZVs, namely, Euler sums form a substantially larger space than the classical MZVs. Moreover, their relations already reflect the richer geometry of periods at general $N$-th roots of unity (the level $N=2$ for Euler sums) and a striking amount of structure survives for general level $N$: products, dualities, distribution relations, and parity phenomena continue to govern the algebra.

Euler sums have also acquired independent significance in mathematical physics. They occur naturally in the evaluation of Feynman integrals, in perturbative quantum field theory, and in the study of periods associated with graphs and iterated integrals (see, for e.g., \cite{BlumleinBrVe2010,BorweinBrBr1997,Broadhurst1996a,BroadhurstKr1995}). This has made explicit reduction formulas not merely an exercise in symbolic manipulation, but an important problem in the systematic understanding of the period algebra underlying these calculations.

\subsection{Finite multiple zeta values.}
A particularly intriguing development in the subject is the appearance of a parallel theory over finite fields (see \cite{KanekoZa2026,Hoffman2004,Zhao2008a,Zhao2011c}). Let $\calP$ be the set of all primes and put
\begin{equation*}
\calA:= \frac{\prod_{p\in\calP} \Z/p\Z}{\oplus_{p\in\calP} \Z/p\Z}.
\end{equation*}
Thus, $(a_p)_{p\in\calP}=(a_p)_{p\in\calP}$ in $\calA$ if $a_p=b_p$ for all sufficiently large primes $p$. We then define the \emph{finite multiple zeta values} by
\begin{equation*}
\zeta_\calA(\bfs):=\left( \sum_{p>n_1>\dots>n_d>0}  \frac{1}{n_1^{s_1}\cdots n_d^{s_d}} \pmod{p}\right)_{p: \calP}\in \calA.
\end{equation*}

The central philosophy behind the theory of finite MZVs was articulated most prominently by Kaneko and Zagier in \cite[Main Conjecture]{KanekoZa2026}. Let $\MZV$ denote the algebra of classical MZVs and $\FMZV$ the algebra of finite MZVs. Then they predict in \cite{KanekoZa2026} that there is an isomorphism
$$
\FMZV \cong \MZV/\zeta(2)\MZV.
$$
The significance of this conjecture is not simply that it predicts numerical identities. It suggests that the finite and classical theories are two manifestations of a common algebraic structure. In particular, a classical MZV should have a finite counterpart, and relations among classical MZVs should have finite analogues. Part of this correspondence (more precisely, the surjection from the left to the right assuming the map is well-defined) is guaranteed by Yasuda's result \cite{Yasuda2016}. However, the following seemingly naive-looking, yet fundamental, and in fact very difficult question does not have a definite answer yet:
\begin{ques}
Is there a nonzero element in $\FMZV$?
\end{ques}

\subsection{Finite Euler sum and the key conjecture.}
The same philosophy above naturally raises the question for Euler sums. We define the \emph{finite Euler sums} $\zeta_\calA(\bfs;\bfeps)$ as follows. For any $\bfs\in\N^d$ and $\bfeps\in\{\pm 1\}^d$,
\begin{equation}\label{equ:FMZdepth2}
\zeta_\calA(\bfs;\bfeps):=\left( \sum_{p>n_1>\dots>n_d>0}  \frac{\eps_1^{n_1}\cdots \eps_d^{n_d}}{n_1^{s_1}\cdots n_d^{s_d}} \pmod{p}\right)_{p: \calP}\in \calA.
\end{equation}
These infinite tuples then form a $\Q$-vector space denoted by $\FES$. For example, \cite[Thm. 3.1]{Zhao2008a} states that
\begin{equation}\label{equ:FMZVdepth2}
\zeta_\calA(s,t)=\left( (-1)^s \binom{s+t}{s} \frac{B_{p-s-t}}{s+t} \pmod{p} \right)_{p\in\calP,p>s+t}\in \calA.
\end{equation}
The author proposed the following key conjecture
(see \cite[Conjecture~8.6.9]{Zhao2016}) by extending the above conjecture of Kakeko and Zagier from MZVs to Euler sums.

\begin{conj}\label{conj:KanekoZagierAltVersion}
For any $w\in\N$, let $\FES_{w}$ (resp.\ $\ES_w$) be the $\Q$-vector space generated by
all finite Euler sums (resp.\ Euler sums) of weight $w$. Then, there is an isomorphism:
\begin{align}
f_\ES: \FES_{w} & \longrightarrow \frac{\ES_w}{\zeta(2)\ES_{w-2}} , \notag\\
 \zeta_{\calA}(\bfs;\bfeps) & \longmapsto \zeta_\ast^\Sy(\bfs;\bfeps)  \pmod{\zeta(2)}   \label{equ:defn_f_ES}
\end{align}
where
\begin{align*}
\zeta_\ast^\Sy(\bfs;\bfeps):=&\sum_{i=0}^d
\left(\prod_{j=1}^i (-1)^{s_j} \eps_j \right) \zeta_\ast(s_i,\dots,s_1;\eps_i,\dots,\eps_1) \zeta_\ast(s_{i+1},\dots,s_d;\eps_{i+1},\dots,\eps_d)
\end{align*}
where $\zeta_\ast(\bfs;\bfeps)$ is the stuffle-regularized value (which is a non-constant polynomial if $s_1=\eps_1=1$ and is equal to $\zeta(\bfs;\bfeps)$ otherwise).
\end{conj}
We remark that the symmetric Euler sum $\zeta_\ast^\Sy(\bfs;\bfeps)$ is always a constant real number. Moreover,
one can replace $\zeta_\ast^\Sy(\bfs;\bfeps)$ by the shuffle regularized value $\zeta_\sha^\Sy(\bfs;\bfeps)$\footnote{See \cite[\S 13.3.1]{Zhao2016} for the definition of the two different ways of regularizations.}
since the difference always lies in $\zeta(2)\ES_{w-2}$.

The formulation of this conjecture is deceptively simple. The alternating setting introduces new signs, new distribution relations, and a substantially larger space of periods. In particular, there is no canonical reason for a classical Euler sum to determine a unique finite Euler sum. Even after passing to the quotient by $\zeta(2)$-products, one must choose representatives compatible with the various algebraic structures. Thus, the conjectural isomorphism should not merely be regarded as an abstract vector-space correspondence. One would like to construct the correspondence explicitly.

This is the problem addressed in the present paper.

\subsection{Inverse images of Euler sums under $f_\ES$: effective Euler sums.}
An informative and effective way to approach the correspondence in Conjecture~\ref{conj:KanekoZagierAltVersion} is to reverse the usual question. We start with a classical Euler sum and seek an explicit finite element that should correspond to it. We call this element an ``\emph{effective Euler sum}''. The terminology is intended to emphasize that these elements are not merely existential representatives: they are explicit expressions designed to make the conjectural correspondence computable. However, \emph{we must assume Hypothesis~\ref{Hyp:Key} throughout this paper}, without which all the results in this paper become nonsense:
\begin{KeyHyp}\label{Hyp:Key}
The map $f_\ES$ in \eqref{equ:defn_f_ES} is well-defined.
\end{KeyHyp}

Moreover, we often say ``the'' effective Euler sums because such objects are unique by Conjecture~\ref{conj:KanekoZagierAltVersion}. There are several reasons why their construction is worthwhile. First, it provides concrete evidence for the conjectural isomorphism at the level of individual elements. Second, it makes it possible to compare the algebraic structures on the finite and classical sides directly. Third, effective representatives can be used to formulate and test identities that would otherwise remain inaccessible on the finite side. Finally, explicit formulas often reveal phenomena that are invisible from an abstract dimension argument. In this sense, the construction of effective Euler sums may be viewed as a finite analogue of choosing a particularly useful basis of periods.

The first batch of nontrivial cases occur in low depth. At depth one, the alternating situation is relatively transparent. The classical Euler sums $\zeta(\ol{s})$ are essentially ordinary zeta values by \eqref{equ:ESdepth1}, up to the familiar factor $2^{1-s}-1$, and the finite analogues can be described explicitly. Depth two is the first setting in which the genuinely nested nature of the sums becomes essential. It is also the first setting in which different algebraic structures such as stuffle multiplication, shuffle-type relations, duality, and parity interact in a substantial way. Consequently, depth two provides an ideal testing ground for the conjectural correspondence.

In this paper we construct effective Euler sums explicitly in depths one and two. The construction is designed to respect the natural weight and depth filtrations and, as far as possible, the algebraic relations satisfied by the corresponding classical Euler sums. Section~\ref{sec:depthOne} treats the depth one case. In depth two, the formulas exhibit a characteristic dependence on the parity of the weight and on the signs occurring in the Euler sum. This is not accidental. Parity has long played a central role in the reduction theory of Euler sums and MZVs: certain combinations of weight and depth force a reduction to lower-depth objects, while the complementary parity cases retain genuinely new information. The finite theory reflects this distinction, and it provides useful guidance for determining which classical expressions should serve as effective representatives.

Hence, when the depth is two but the weight is odd, we can reduce all such double Euler sums to depth one and we deal with this easy case in Section~\ref{sec:depthTwoOddWt}. However, when depth is two and the weight is even, we have to go up to depth three by first extending the parity result of Panzer from \cite{Panzer2017} to the non-admissible cases in Section~\ref{sec:Parity}. Then we will construct these (non-reducible) effective Euler sums using this information in Section~\ref{sec:depthTwoEvenWt}. In the appendix, motivated by a similar argument of Kaneko and Zagier for double zetas, we present an heuristic approach to defining effective double Euler sums of even weight and provide numerical evidence for its validity.

\medskip
\textbf{Acknowledgement.} The author thanks Prof. Huilin Zhu for his invitation to visit the Tianyuan Mathematical Center in Southeast China in July, 2026, supported by the funding number 12526102. The main results of this paper were presented at The 5th Workshop on Multiple Zeta Values and Related Fields, held in Changsha, China, in July 2026.

\section{Effective Finite Alternating Riemann Zeta Values}\label{sec:depthOne}
Recall that if $\bfs$ is non-admissible, then there are two ways to regularize $\zeta(\bfs)$ to get two possible polynomials (generally different): $\zeta_*(\bfs;T)$ by stuffle relations and $\zeta_\sha(\bfs;T)$ by shuffle relations. Therefore, the first guiding principle for finding the effect Euler sums is that there are two possible candidates $Z_{\calA,*}^\eff(\bfs)$ and $Z_{\calA,\sha}^\eff(\bfs)$ such that
\begin{equation*}
     f_\ES Z_{\calA,*}^\eff(\bfs)=\zeta_*(\bfs;0),\quad   f_\ES Z_{\calA,\sha}^\eff(\bfs)=\zeta_\sha(\bfs;0).
\end{equation*}

However, in depth one and two cases, $\zeta_*(\bfs;T)=\zeta_\sha(\bfs;T)$ except for $\bfs=(1,1)$.
But $\zeta_*(1,1;T)=(T^2-\zeta(2))/2\equiv T^2/2=\zeta_\sha(1,1;T)$ mod $\zeta(2)$. Thus, we only have one candidate $Z_\calA^\eff(\bfs)$ in these cases. In particular, as $\zeta_\sha(1;T)=\zeta_*(1;T)=T$ we can define $Z_\calA^\eff(1)=0$.

We now recall a well-known result.  Let $w>1$ be a positive odd number. Then we have (cf. \cite{KanekoZa2026})
\begin{equation}\label{equ:effRiemann}
    Z_\calA^\eff(w)=\big(Z_p^\eff(w)\big)_{p\in\calP,p>w}:=\left(  \frac{B_{p-w}}{w} \pmod{p} \right)_{p\in\calP,p>w}\in \calA.
\end{equation}
In fact, under Conjecture~\ref{conj:KanekoZagierAltVersion} we see easily from \eqref{equ:FMZVdepth2} that
\begin{align*}
    f_\ES( w Z_\calA^\eff(w))= f_\ES( \zeta_\calA(w-1,1) )= &\,\zeta_\ast^\Sy(w-1,1).
\end{align*}
If $w>2$ is odd then
\begin{align*}
    f_\ES( w Z_\calA^\eff(w))=&\, \zeta_\ast(w-1,1)+\zeta_\ast(w-1)\zeta_\ast(1)-\zeta_\ast(1,w-1) \\
    =&\,2\zeta(w-1,1) +\zeta(w)   \qquad \text{(by stuffle relation)}\\
    \equiv&\, n\zeta(w) \pmod{\zeta(2)}
\end{align*}
by the parity formula for double zeta values (see \cite[(75)]{BorweinBrBr1997} or \cite[Cor.~4.2]{XuZhao2023Oct}).
If $w$ is even then $Z_\calA^\eff(w)=0$ while
\begin{align*}
   \zeta_\ast^\Sy(w-1,1)=\zeta_\ast(w-1,1)-\zeta_\ast(w-1)\zeta_\ast(1)+\zeta_\ast(1,w-1)
   = -\zeta(w) \equiv 0 \pmod{\zeta(2)}.
\end{align*}
This shows that $Z_\calA^\eff(w)$ is indeed the correct \emph{effective} Riemann zeta value.

We now turn to the alternating version of \eqref{equ:effRiemann}.
\begin{prop}
For all $w\in\N$ the \emph{effective} finite alternating Riemann zeta value is
\begin{equation*}
    Z_\calA^\eff(\ol{w})=\big(Z_p^\eff(\ol{w})\big)_{p\in\calP,p>w+1}:=
\left\{
  \begin{array}{ll}
   \displaystyle   \left( (2^{1-w}-1) \frac{B_{p-w}}{w} \pmod{p} \right)_{p\in\calP,p>w+1} , & \hbox{if $w>1$;} \\
  \displaystyle   \emph{\tq}_2:=\left( \frac{2^{p-1}-1}{p} \pmod{p} \right)_{p\in\calP,p>2} , & \hbox{if $w=1$.}
  \end{array}
\right.
\end{equation*}
\end{prop}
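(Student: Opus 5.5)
Since $f_\ES$ is well defined by Hypothesis~\ref{Hyp:Key} and linear, it suffices to exhibit, for each $w$, an element of $\FES_w$ that equals the displayed tuple in $\calA$ and whose image under $f_\ES$ is $\zeta(\ol{w})$ modulo $\zeta(2)$. I will argue the two cases $w>1$ and $w=1$ separately, using only depth one and two finite Euler sums that can be evaluated directly modulo $p$.

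\emph{Case $w>1$.} By \eqref{equ:ESdepth1}, $\zeta(\ol{w})=(2^{1-w}-1)\zeta(w)$. For odd $w$, the discussion preceding the proposition gives $f_\ES(Z_\calA^\eff(w))\equiv\zeta(w)$. Multiplying by the rational scalar $2^{1-w}-1$ then gives
\[
f_\ES\bigl((2^{1-w}-1)Z_\calA^\eff(w)\bigr)\equiv\zeta(\ol{w}) \pmod{\zeta(2)}.
\]
For even $w$, the right-hand side $\zeta(\ol{w})$ is a rational multiple of $\zeta(2)^{w/2}$, so it is $\equiv0$. On the finite side, $B_{p-w}=0$ because $p-w$ is odd and greater than $1$, so the tuple is zero as well. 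This closes the case $w>1$, provided one accepts \eqref{equ:effRiemann} as an element of $\FES$ (via \eqref{equ:FMZVdepth2}, it equals $\zeta_\calA(w-1,1)/w$ for odd $w$). As a cross-check, one can compute $\zeta_\calA(\ol{w})$ modulo $p$ directly. Splitting $\sum_{n<p}(-1)^n n^{-w}$ into even and odd $n$ gives $2^{1-w}H_{(p-1)/2}(w)-H_{p-1}(w)$. The classical congruence $H_{(p-1)/2}(w)\equiv (2^{w}-2)\,B_{p-w}/w$ (for odd $w$) then identifies this with a rational multiple of the claimed expression. That confirms the tuple lies in $\FES_w$ and that its normalization is consistent.

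\emph{Case $w=1$.} Here $\zeta(\ol1)=-\log 2$. I would look for a finite element mapping to $-\log2$ modulo $\zeta(2)$. The natural candidate is $\zeta_\calA(\ol1)$ itself. Its image under $f_\ES$ is
\[
\zeta_\ast^\Sy(\ol1)=\zeta(\ol1)+(-1)(-1)\zeta(\ol1)=2\zeta(\ol1)=-2\log 2.
\]
On the finite side I need the classical Eisenstein-type congruence $\sum_{n=1}^{p-1}(-1)^n/n\equiv -2\,\tq_2$ (mod $p$). It is proved by expanding $\bigl((1+1)^p-1-1\bigr)/p=\sum_{k=1}^{p-1}\binom pk/p$ and using $\binom pk/p\equiv(-1)^{k-1}/k$. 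This gives $2\tq_2\equiv\sum(-1)^{k-1}/k$, hence $\zeta_\calA(\ol1)=-2\tq_2$. Combining, $f_\ES(-2\tq_2)=-2\log2$, so $f_\ES(\tq_2)=\log 2$.

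The main obstacle is the sign and normalization bookkeeping: comparing $\log 2$ with $\zeta(\ol1)=-\log2$ suggests the effective value should be $-\tq_2$, in tension with the stated $\tq_2$. I would settle this by rechecking the sign convention $\zeta(\ol s)=\sum(-1)^n n^{-s}$ and the factor $\prod(-1)^{s_j}\eps_j$ in $\zeta^\Sy_\ast$ with the same care, and adjust the final sign accordingly. The parallel check in the case $w>1$ is to make sure the constant from the even/odd splitting agrees with $2^{1-w}-1$.
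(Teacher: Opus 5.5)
For $w>1$ your argument is correct, but it is not the paper's route. The paper handles every $w$ in one line: it cites \cite[Thm.~8.2.7]{Zhao2016} for $\zeta_\calA(\ol{w})=2Z_\calA^\eff(\ol{w})$ and notes that $\zeta_\ast^\Sy(\ol{w})\equiv 2\zeta(\ol{w})$. You instead transport the classical relation \eqref{equ:ESdepth1} by linearity, using $Z_\calA^\eff(w)=\zeta_\calA(w-1,1)/w$ and the effective Riemann zeta computation already in the text. This means you never need to evaluate $\zeta_\calA(\ol{w})$. In exchange, the paper's route exhibits the more natural representative $\tfrac12\zeta_\calA(\ol{w})$, which mirrors $\zeta^\Sy_\ast(\ol w)=2\zeta(\ol w)$.

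Your cross-check has a sign slip. For odd $w\ge 3$ the correct congruence is $H_{(p-1)/2}(w)\equiv (2-2^{w})B_{p-w}/w \pmod p$. For example, with $p=7$ and $w=3$ one has $H_3(3)\equiv 1$, whereas your version gives $(2^3-2)B_4/3=-1/15\equiv -1$. With the correct sign you get $\zeta_\calA(\ol{w})\equiv 2(2^{1-w}-1)B_{p-w}/w=2Z_\calA^\eff(\ol{w})$. That is exactly the identity the paper cites, and its image $2\zeta(\ol w)$ agrees with your main line. With your sign the check would have yielded $f_\ES(Z_\calA^\eff(\ol w))=-\zeta(\ol w)$, a contradiction rather than a confirmation.

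At $w=1$ your computation is right. It follows the same route as the paper, but you actually prove the congruence for $\zeta_\calA(\ol1)$ instead of citing it. The tension you found is genuine, and no rechecking of conventions will remove it.

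With $\zeta(\ol{s})=\sum_n(-1)^n n^{-s}$ and the factor $(-1)^{1}\eps_1=1$ in $\zeta_\ast^\Sy$, one has $\zeta_\calA(\ol1)=-2\tq_2$; for $p=5$ the sum is $\equiv 4\equiv -2\cdot 3$. Also $f_\ES(\zeta_\calA(\ol1))=2\zeta(\ol1)=-2\log 2$. Hence $f_\ES(\tq_2)=\log 2=-\zeta(\ol1)$, and the effective counterpart of $\zeta(\ol1)$ is $-\tq_2$. The printed $w=1$ case is therefore off by a sign.

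The paper itself says, right after the proposition, that $\tq_2$ corresponds to $-\zeta(\ol1)=\log 2$. Its proof hides the discrepancy because the cited identity $\zeta_\calA(\ol w)=2Z_\calA^\eff(\ol w)$ holds only for $w>1$; at $w=1$ the left side is $-2\tq_2$. The sign matters later: in \eqref{equ:KZrel3SMES(1,-1)}, $Z_\calA^\eff(\ol1)$ stands in for the $\zeta(\ol1)$ of \eqref{equ:KZrel2SMES(-1,1)version}. So state your conclusion as $Z_\calA^\eff(\ol1)=-\tq_2$ (equivalently, $\tq_2$ is the effective counterpart of $\log 2$) rather than leaving the sign open.
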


\begin{proof}
By \cite[Thm. 8.2.7]{Zhao2016} for all $w\in\N$
\begin{align*}
    f_\ES\big( 2 Z_\calA^\eff(\ol{w})\big)=&\, f_\ES\big( \zeta_\calA(\ol{w})\big)= \zeta_\ast^\Sy(\ol{w})=2\zeta(\ol{w}).
\end{align*}
This verifies that $Z_\calA^\eff(\ol{w})$ is the correct effective alternating Riemann zeta value.
\end{proof}

In particular, $\tq_2$ corresponds to $-\zeta(\ol{1})=\log 2$ and, for all $w>1$
\begin{equation}\label{equ:ZbarByZ}
     Z_\calA^\eff(\ol{w})=(2^{1-w}-1)Z_\calA^\eff(w) \quad \text{v.s.}\quad \zeta(\ol{w})=(2^{1-w}-1)\zeta(w).
\end{equation}

As another evidence, by \cite[(8.52)]{Zhao2016} if $w>2$ is odd
\begin{align*}
    f_\ES( Z_\calA^\eff(\ol{w}))=&\, f_\ES( -\zeta_\calA(\ol{w-1},1) )= -\zeta_\ast^\Sy(\ol{w-1},1)\\
    =&\, -\zeta_\ast(\ol{w-1},1)+\zeta_\ast(\ol{w-1})\zeta_\ast(1)-\zeta_\ast(1,\ol{w-1}) = \zeta(\ol{w})
\end{align*}
by \cite[Thm. 4.2]{BCJXXZhao2020c}.

\section{Effective Finite Double Euler Sums of Odd Weight}\label{sec:depthTwoOddWt}
In this section, we will define all the \emph{effective} finite objects corresponding to double Euler sums of odd weight under $f_\ES$. We first recall a parity result on double Euler sums. Its motivic version can be found in \cite[Prop. 4.1]{XuZhao2023Oct} or \cite[Cor.~4.2.4]{Glanois2015}.

\begin{prop}\label{prop:DBESreduce}
Let $\eps_1,\eps_2=\pm1$, $a,b\in\N$ with $a+b=w$ odd. Then modulo $\zeta(2)$
\begin{align*}
\zeta_\ast(a,b;\eps_1,\eps_2)\equiv &\, -\frac12\zeta(w;\eps_1\eps_2)+\frac{(-1)^a}2\bigg[\binom{w-1}{a-1}\zeta(w;\eps_1)+\binom{w-1}{b-1}\zeta(w;\eps_2)\bigg].
\end{align*}
\end{prop}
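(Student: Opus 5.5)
The plan is to derive the admissible cases from the known motivic parity result and then to reach the non-admissible cases through stuffle regularization. For admissible $(a,b;\eps_1,\eps_2)$, meaning $(a,\eps_1)\neq(1,1)$, I would start from the motivic parity formula for double Euler sums of odd weight (\cite[Prop.~4.1]{XuZhao2023Oct}, or \cite[Cor.~4.2.4]{Glanois2015}) and apply the period map. In odd weight that formula writes $\zeta(a,b;\eps_1,\eps_2)$ as a $\Q$-linear combination of two kinds of terms.
\begin{itemize}
\item Depth-one values of weight $w$: $\zeta(w;\eps_1\eps_2)$, $\zeta(w;\eps_1)$ and $\zeta(w;\eps_2)$.
\item Products $\zeta(j;\eta)\zeta(w-j;\eta')$ with $1\le j\le w-1$.
\end{itemize}
If one wants to avoid the motivic citation, the same shape follows from the classical explicit evaluation (Borwein--Borwein--Girgensohn / Flajolet--Salvy, cf.\ \cite[(75)]{BorweinBrBr1997}). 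That evaluation comes from comparing the stuffle and shuffle products of two depth-one Euler sums, and it gives exactly this shape with explicit binomial coefficients.

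The second step is to kill the product terms modulo $\zeta(2)$. Since $w$ is odd, in every product $\zeta(j;\eta)\zeta(w-j;\eta')$ exactly one of $j$ and $w-j$ is even.
\begin{itemize}
\item For even $2k\ge2$, both $\zeta(2k)$ and $\zeta(\ol{2k})=(2^{1-2k}-1)\zeta(2k)$ are rational multiples of $\zeta(2)^k$. Such products therefore lie in $\zeta(2)\ES_{w-2}$.
\item The only possible exception is a factor $\zeta(\ol 1)=-\log 2$. Its partner has weight $w-1$, which is even and at least $2$, so that partner is again a rational multiple of a power of $\zeta(2)$.
\end{itemize}
Hence only the depth-one weight-$w$ terms survive. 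The remaining task is to read off their coefficients and match them with
\[
-\tfrac12,\qquad \tfrac{(-1)^a}{2}\binom{w-1}{a-1},\qquad \tfrac{(-1)^a}{2}\binom{w-1}{b-1}.
\]
As a sanity check, for $\eps_1=\eps_2=1$ the formula should reproduce the classical parity result used in Section~\ref{sec:depthOne}. There it gives $2\zeta(w-1,1)+\zeta(w)\equiv w\zeta(w)$.

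For the non-admissible case $a=1$, $\eps_1=1$ (so $b=w-1$ is even), I would use the stuffle relation with $\zeta_\ast(1)=T$ specialized to $0$:
\[
\zeta_\ast(1,b;1,\eps_2)=-\zeta(b,1;\eps_2,1)-\zeta(w;\eps_2).
\]
The value $\zeta(b,1;\eps_2,1)$ is admissible, because $b\ge2$, or $b=\ldots$ with $\eps_2=-1$. Applying the admissible case with $(-1)^b=1$ to it, and simplifying, gives
\[
-\tfrac12\zeta(w;\eps_2)-\tfrac12\Big[\tbinom{w-1}{b-1}\zeta(w;\eps_2)+\zeta(w)\Big].
\]
This is exactly the claimed right-hand side at $a=1$, $\eps_1=1$, since $\binom{w-1}{0}=1$ and $(-1)^a=-1$.

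I expect the main obstacle to be the bookkeeping in the first step. The binomial coefficients and signs $(-1)^a$ in the cited formula must be tracked carefully, along with the sign conventions for $\eps$ there versus here. One must also confirm that no surviving depth-one term is hidden inside a product, for instance via $\zeta(\ol1)$ paired with $\zeta(w-1;\cdot)$, which the parity argument above excludes.
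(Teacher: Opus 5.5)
Your proposal is correct and follows essentially the same route as the paper, which gives no argument of its own: it simply recalls this parity result, points to the motivic versions in \cite[Prop.~4.1]{XuZhao2023Oct} and \cite[Cor.~4.2.4]{Glanois2015}, and implicitly drops the product terms modulo $\zeta(2)$; like the paper, you take the exact coefficients on trust from the citation. Your explicit stuffle reduction of the non-admissible case $a=1$, $\eps_1=1$ to the admissible value $\zeta(b,1;\eps_2,1)$ is correct (it checks out, e.g.\ $\zeta_\ast(1,\ol{2})\equiv\frac58\zeta(3)$) and is a useful addition that the paper leaves implicit.
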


We immediately obtain the following corollary by specialization.
\begin{cor}\label{cor:DBESreduce}
We have
\begin{align*}
\zeta_\ast(a,b)\equiv&\, -\frac12\zeta(w)+\frac{(-1)^a}{2}\binom{w}{a} \zeta(w),\\
\zeta_\ast(\ol{a},\ol{b})\equiv &\, -\frac12\zeta(w)+\frac{(-1)^a}2\binom{w}{a}\zeta(\ol{w}),\\
\zeta_\ast(\ol{a},b)\equiv &\, -\frac12\zeta(\ol{w})+\frac{(-1)^a}2\bigg[\binom{w-1}{a-1}\zeta(\ol{w})+\binom{w-1}{b-1}\zeta(w)\bigg],\\
\zeta_\ast(a,\ol{b})\equiv &\,-\frac12\zeta(\ol{w})+\frac{(-1)^a}2\bigg[\binom{w-1}{a-1}\zeta(w)+\binom{w-1}{b-1}\zeta(\ol{w})\bigg].
\end{align*}
\end{cor}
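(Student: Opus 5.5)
The four formulas of Corollary~\ref{cor:DBESreduce} follow from Proposition~\ref{prop:DBESreduce} by substituting the four sign choices $(\eps_1,\eps_2)\in\{(1,1),(-1,-1),(-1,1),(1,-1)\}$. In each case I will record which of $\zeta(w;\eps_1\eps_2)$, $\zeta(w;\eps_1)$, $\zeta(w;\eps_2)$ equals $\zeta(w)$ and which equals $\zeta(\ol{w})$, using the bar convention $\zeta(w;-1)=\zeta(\ol{w})$. Throughout, $w=a+b$ is odd, which is the standing hypothesis of Proposition~\ref{prop:DBESreduce}.

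\emph{Mixed signs.} For $(\eps_1,\eps_2)=(-1,1)$ we have $\eps_1\eps_2=-1$, so the first term is $-\frac12\zeta(\ol{w})$. The bracket becomes $\binom{w-1}{a-1}\zeta(\ol{w})+\binom{w-1}{b-1}\zeta(w)$, which is the third formula. The case $(1,-1)$ is symmetric: only the roles of $\zeta(w)$ and $\zeta(\ol{w})$ in the bracket are exchanged, giving the fourth formula. No simplification of binomials is needed in these two cases.

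\emph{Equal signs.} When $\eps_1=\eps_2$, both bracket terms carry the same zeta value: $\zeta(w)$ for $(1,1)$ and $\zeta(\ol{w})$ for $(-1,-1)$. The bracket therefore collapses by Pascal's rule with $b=w-a$:
\[
\binom{w-1}{a-1}+\binom{w-1}{w-a-1}=\binom{w-1}{a-1}+\binom{w-1}{a}=\binom{w}{a}.
\]
Since $\eps_1\eps_2=1$ in both cases, the leading term is always $-\frac12\zeta(w)$. This gives the first formula for $(1,1)$ and the second, with $\binom{w}{a}\zeta(\ol{w})$, for $(-1,-1)$.

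The only step needing care is the edge cases of Pascal's rule, for example $a=1$ or $b=1$, where one binomial is $\binom{w-1}{0}$ and the identity still holds. The regularization $\zeta_\ast$ on the left-hand side is inherited unchanged from the proposition, so the non-admissible cases such as $(1,b)$ need no separate treatment.
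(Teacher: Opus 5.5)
Your proposal is correct and matches the paper, which obtains the corollary ``immediately by specialization'' of Proposition~\ref{prop:DBESreduce} to the four sign patterns. The Pascal's-rule collapse $\binom{w-1}{a-1}+\binom{w-1}{b-1}=\binom{w}{a}$ is the only simplification needed, and it is needed exactly in the two equal-sign cases, as you noted.
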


\begin{cor}\label{cor:ESWtSumFormula}
\label{conj:WtSumFormula}
For all positive integers $w\ge 3$, we have
\begin{align}
  \sum_{\substack{a\ge 2,b\ge 1\\ a+b=w}} 2^a \zeta(a,b)=&\, (w+1)\zeta(w), \notag\\
  \sum_{\substack{a\ge 2,b\ge 1\\ a+b=w}} 2^a \zeta(\ol{a},b)=&\, 2\zeta(\ol{w}) + (w-1)\zeta(w),  \notag\\
  \sum_{\substack{a\ge 2,b\ge 1\\ a+b=w}} 2^a \Big[\zeta(\ol{a},\ol{b})+\zeta(a,\ol{b})\Big]= &\, 2w\zeta(\ol{w})+2\zeta(w).
  \label{equ:WtSumES(a,barb)AND(a,barb)}.
\end{align}
Further, if $w$ is an odd integer and $w\ge 3$, then
\begin{align*}
  \sum_{\substack{a\ge 2,b\ge 1\\ a+b=w}} 2^a \zeta(\ol{a},\ol{b})\equiv &\, (w-1)\zeta(\ol{w})+(3-2^w)\zeta(w)  \pmod{\zeta(2)},\\
  \sum_{\substack{a\ge 2,b\ge 1\\ a+b=w}} 2^a \zeta(a,\ol{b})\equiv &\, (w+1)\zeta(\ol{w})+(2^w-1)\zeta(w)\pmod{\zeta(2)}.
\end{align*}
\end{cor}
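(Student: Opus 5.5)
The plan is to prove the three exact identities, valid for every $w\ge3$, by double shuffle. The two congruences for odd $w$ then come almost mechanically from Corollary~\ref{cor:DBESreduce}.

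\textbf{Exact identities.} Fix signs $\eps_1,\eps_2$ and a weight $w$.
\begin{enumerate}
\item Form the symmetric sum
\begin{equation*}
\Sigma(\eps_1,\eps_2)=\sum_{a+b=w,\ a,b\ge1}\zeta_\sha(a;\eps_1)\,\zeta_\sha(b;\eps_2),
\end{equation*}
with the shuffle-regularized products (so $\zeta_\sha(1)=T$).
\item Expand $\Sigma$ via the shuffle product of the level-$2$ iterated integrals. The alphabet is $\ta=dt/t$ together with $dt/(1-t)$ and $dt/(1+t)$.
\item For the words $\ta^{a-1}x$ and $\ta^{b-1}y$, the shuffle contributes
\begin{equation*}
\binom{j-1}{a-1}+\binom{j-1}{b-1}
\end{equation*}
times a depth-two integral whose first letter block has length $j$.
\item Summing $\binom{j-1}{a-1}$ over $a$ gives $2^{j-1}$. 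So the shuffle side becomes $\sum_j 2^{j}\zeta(j,w-j;\cdot,\cdot)$, with sign pattern determined by $(\eps_1,\eps_2)$.
\item Translating the letters back to signed sums, the product-of-signs rule is $\eta_1=\eps_1\eps_2$, $\eta_2=\eps_2$ (or $\eps_1$ for the other term).
\item For $(\eps_1,\eps_2)=(-1,-1)$ the two pieces land on $(\ol a,\ol b)$-type and $(a,\ol b)$-type words. This is exactly why the third formula only controls $\zeta(\ol a,\ol b)+\zeta(a,\ol b)$. The mixed case $(\eps_1,\eps_2)=(-1,1)$ produces the $(\ol a,b)$ sum together with terms already known.
\item Compute $\Sigma$ again via the stuffle product:
\begin{equation*}
\zeta(a;\eps_1)\zeta(b;\eps_2)=\zeta(a,b;\eps_1,\eps_2)+\zeta(b,a;\eps_2,\eps_1)+\zeta(w;\eps_1\eps_2).
\end{equation*}
Summed over $a+b=w$, this gives the full (unweighted) double sum.
\item Evaluate that double sum with the sum formula for Euler sums in depth two, i.e.\ $\sum\zeta(a,b)=\zeta(w)$ and its alternating analogues. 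Obtain those from the same shuffle–stuffle comparison applied to $\zeta(1)\zeta(w-1)$.
\item Equate the two expressions. The $T$-dependent terms from $a=1$ or $b=1$ must cancel, since both regularizations agree in depth $\le2$ except at $(1,1)$, which does not occur for $w\ge3$. What remains is the weighted sums $\sum 2^a\zeta(a,b;\cdot)$ equal to explicit multiples of $\zeta(w)$ and $\zeta(\ol w)$.
\end{enumerate}

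\textbf{Congruences for odd $w$.} Insert Corollary~\ref{cor:DBESreduce} into $\sum_{a\ge2}2^a\zeta(\ol a,\ol b)$. The needed binomial sums are
\begin{align*}
\sum_{a=0}^{w}\binom{w}{a}(-2)^a&=(-1)^w=-1,\\
\sum_{a}2^a&=2^w-4 \qquad (2\le a\le w-1),\\
\sum_a(-2)^a\binom{w-1}{a-1}&=-2(-1)^{w-1}=-2,\\
\sum_a(-2)^a\binom{w-1}{b-1}&=\sum_a(-2)^a\binom{w-1}{a}=(-1)^{w-1}=1.
\end{align*}
In each case subtract the boundary terms at $a=0$ and $a=1$ (and at $a=w$ where relevant). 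This should give
\begin{equation*}
(w-1)\zeta(\ol w)+(3-2^w)\zeta(w) \pmod{\zeta(2)}.
\end{equation*}
The $(a,\ol b)$ congruence then follows by subtracting from \eqref{equ:WtSumES(a,barb)AND(a,barb)}; the coefficients indeed add to $2w\zeta(\ol w)+2\zeta(w)$. As a consistency check, the same substitution into the $(\ol a,b)$ line of Corollary~\ref{cor:DBESreduce} must reproduce the exact second formula modulo $\zeta(2)$.

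\textbf{Main obstacle.} The hard step is the level-$2$ shuffle bookkeeping. One must track exactly how the letters $dt/(1\mp t)$ recombine into sign patterns, and handle the boundary/regularized terms when $a=1$ or $b=1$. Here $\zeta(\ol1)=-\log2$ is finite but $\zeta(1)=T$ is not, and this is the source of the extra $2\zeta(\ol w)$ and $2\zeta(w)$ terms. I would first verify the resulting identities numerically for $w=3,4$ before writing the general coefficient extraction.
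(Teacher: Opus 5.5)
Your strategy is sound and more self-contained than the paper's. You prove the exact identities by regularized double shuffle and the congruences via Corollary~\ref{cor:DBESreduce}. The paper instead quotes the three exact identities from \cite{BCJXXZhao2020c} and derives \emph{both} congruences from Corollary~\ref{cor:DBESreduce} and \eqref{equ:CombId1}--\eqref{equ:CombId4}.

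Getting the $(a,\ol b)$ congruence by subtracting the $(\ol a,\ol b)$ one from the third exact identity is a valid shortcut. To reach the stated form you still need $\zeta(\ol w)=(2^{1-w}-1)\zeta(w)$, because the raw sum is congruent to $(2^{w-1}+w-1)\zeta(\ol w)-(2^{w-1}-2)\zeta(w)$.

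However, steps 5--6, which you rightly call the crux, are wrong as written. Put $\omega(1)=\tb$ and $\omega(-1)=-\tc$. Then the word $\ta^{j-1}\omega(\alpha)\ta^{w-j-1}\omega(\beta)$ equals $\zeta(j,w-j;\alpha,\alpha\beta)$. The outer sign is that of the first $\omega$-letter and the inner sign is the product, which is the reverse of your rule. So for $(\eps_1,\eps_2)=(-1,-1)$, where $\eps_1\eps_2=1$, both shuffle families consist of $\zeta(\ol j,w-j)$. This product can never produce $(\ol a,\ol b)$ words; it is $\sum_{a+b=w}\zeta(\ol a)\zeta(\ol b)=\sum_j 2^j\zeta(\ol j,w-j)$ that yields the \emph{second} identity. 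The combination in the \emph{third} identity comes from the mixed product:
\[
\sum_{a+b=w}\zeta(\ol a)\zeta_\sha(b)=\sum_{j=1}^{w-1}2^{j-1}\big[\zeta(\ol j,\ol{w-j})+\zeta_\sha(j,\ol{w-j})\big].
\]

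Second, step 9 only disposes of $T$-dependent terms. The convergent boundary terms at $j=1$ do not cancel, and the extra $2\zeta(\ol w)$ and $2\zeta(w)$ are not regularization artifacts.

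In the $(-1,-1)$ case the shuffle side has $2\zeta(\ol1,w-1)$ while the stuffle side has $2\zeta(\ol1,\ol{w-1})$. You need $\sum_{j=1}^{w-1}\zeta(\ol j,\ol{w-j})=\zeta(\ol1,w-1)+\zeta(\ol w)$. This comes from double shuffle of $\zeta(\ol1)\zeta(w-1)$, not $\zeta(1)\zeta(w-1)$, and it then gives $\sum_{j\ge2}2^j\zeta(\ol j,w-j)=2\zeta(\ol w)+(w-1)\zeta(w)$.

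For the third identity you need double shuffle of $\zeta(\ol1)\zeta(\ol{w-1})$ and $\zeta_\sha(1)\zeta(\ol{w-1})$, which give
\begin{align*}
\sum_{j=1}^{w-1}\zeta(\ol j,w-j)&=\zeta(\ol1,\ol{w-1})+\zeta(\ol{w-1},\ol1)-\zeta(\ol{w-1},1)+\zeta(w),\\
\sum_{j=2}^{w-1}\zeta(j,\ol{w-j})&=\zeta(\ol{w-1},1)-\zeta(\ol{w-1},\ol1)+\zeta(\ol w).
\end{align*}
These are not closed forms. The stray depth-two values cancel only in this specific combination:
\begin{itemize}
\item[(i)] the divergent $\zeta_*(1,\ol{w-1})$ has coefficient $1$ on both sides;
\item[(ii)] $\zeta(\ol1,\ol{w-1})$ cancels the $j=1$ shuffle term.
\end{itemize}
What remains is $\sum_{j\ge2}2^{j}\big[\zeta(\ol j,\ol{w-j})+\zeta(j,\ol{w-j})\big]=2w\zeta(\ol w)+2\zeta(w)$. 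With these corrections your argument becomes a complete proof of what the paper only cites.
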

\begin{proof}
The first three identities are given at the beginning of the proof of \cite[Theorem 4.4]{BCJXXZhao2020c}.
The last two follow easily from the identities
\begin{align}
  \sum_{\substack{a\ge 2,b\ge 1\\ a+b=w}}  2^a  =&\, 2^w-4,    \label{equ:CombId1}\\
  \sum_{\substack{a\ge 2,b\ge 1\\ a+b=w}}  (-2)^a \binom{w}{a}=&\, (1-2)^w-1+2w-(-2)^w=2^w+2w-2,  \label{equ:CombId2}\\
  \sum_{\substack{a\ge 2,b\ge 1\\ a+b=w}}  (-2)^a \binom{w-1}{a}=&\, (1-2)^{w-1}-1+2(w-1)=2w-2,  \label{equ:CombId3}\\
  \sum_{\substack{a\ge 2,b\ge 1\\ a+b=w}}  (-2)^a \binom{w-1}{a-1}=&\, -2\big((1-2)^{w-1}-1-(-2)^{w-1}\big)=2^w.  \label{equ:CombId4}
\end{align}
We remark that it is possible to obtain exact equations involving product of $\zeta(2)$ in these last two formulas because Prop.~\ref{prop:DBESreduce} can be lifted to an exact equation. We leave these to the interested reader.
\end{proof}

\begin{thm} \label{thm:effDBESoddWt}
Suppose $a\ge 2$ and $b\ge 1$ such that $w=a+b$ is odd. Then
\begin{align*}
Z_\calA^\eff(a,b)= &\, \frac12 \left[(-1)^a \binom{w}{a}-1\right]Z_\calA^\eff(w),\\ 
Z_\calA^\eff(\ol{a},\ol{b})= &\, \frac12 \left[(-1)^a (2^{1-w}-1)\binom{w}{a}-1\right]Z_\calA^\eff(w),\\ 
Z_\calA^\eff(\ol{a},b)= &\, \frac12\bigg\{1-2^{1-w}+(-1)^a \bigg[\binom{w-1}{a-1}(2^{1-w}-1) +\binom{w-1}{b-1}\bigg]\bigg\}Z_\calA^\eff(w),  \\ 
Z_\calA^\eff(a,\ol{b})= &\,\frac12\bigg\{1-2^{1-w} +(-1)^a \bigg[\binom{w-1}{a-1}+\binom{w-1}{b-1}(2^{1-w}-1)\bigg]\bigg\}Z_\calA^\eff(w). 
\end{align*}
\end{thm}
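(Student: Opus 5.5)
The plan is to verify, under Hypothesis~\ref{Hyp:Key}, that $f_\ES$ sends each right-hand side to the corresponding double Euler sum modulo $\zeta(2)$. By the guiding principle of Section~\ref{sec:depthOne}, this is exactly what it means for the right-hand side to be the effective finite double Euler sum. Since $w$ is odd and $a\ge2$, the pairs $(a,b;\eps_1,\eps_2)$ are admissible, so $\zeta_\ast(a,b;\eps_1,\eps_2)=\zeta(a,b;\eps_1,\eps_2)$. Moreover $f_\ES$ is $\Q$-linear, so it suffices to compute the image of the depth-one element $Z_\calA^\eff(w)$.

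From Section~\ref{sec:depthOne}, for odd $w>1$ we have
\[
f_\ES\big(Z_\calA^\eff(w)\big)\equiv \zeta(w) \pmod{\zeta(2)}.
\]
By \eqref{equ:ZbarByZ} and the proposition on $Z_\calA^\eff(\ol{w})$, we also have
\[
f_\ES\big(Z_\calA^\eff(\ol{w})\big)\equiv\zeta(\ol{w})=(2^{1-w}-1)\zeta(w).
\]
So every depth-one value appearing below is a rational multiple of $\zeta(w)$ modulo $\zeta(2)$, and it is enough to show that each coefficient in the theorem is the coefficient of $\zeta(w)$ in the reduction of the double sum.

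That reduction is Corollary~\ref{cor:DBESreduce}. In each of its four formulas I substitute $\zeta(\ol{w})=(2^{1-w}-1)\zeta(w)$ and collect terms.

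\textbf{Case $(a,b)$.} The formula gives $\tfrac12\big[(-1)^a\binom{w}{a}-1\big]\zeta(w)$ directly.

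\textbf{Case $(\ol a,\ol b)$.} The formula gives $-\tfrac12\zeta(w)+\tfrac{(-1)^a}{2}\binom{w}{a}(2^{1-w}-1)\zeta(w)$, which matches the stated coefficient.

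\textbf{Case $(\ol a,b)$.} The term $-\tfrac12\zeta(\ol w)$ becomes $\tfrac12(1-2^{1-w})\zeta(w)$. The bracket becomes $\binom{w-1}{a-1}(2^{1-w}-1)+\binom{w-1}{b-1}$. Together these give exactly the stated expression.

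\textbf{Case $(a,\ol b)$.} This is the same computation with the roles of $\binom{w-1}{a-1}$ and $\binom{w-1}{b-1}$ exchanged.

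Applying linearity of $f_\ES$ and these identities, each $f_\ES$ of a right-hand side is congruent to the corresponding $\zeta_\ast(a,b;\eps_1,\eps_2)$ modulo $\zeta(2)$. This establishes the theorem; uniqueness of the representative is by the conjectural isomorphism, as stated in the introduction.

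The argument has no real obstacle. The only points needing care are the sign conventions: keeping the correct $(-1)^a$ in Proposition~\ref{prop:DBESreduce}, and using that $f_\ES(Z_\calA^\eff(w))\equiv\zeta(w)$ holds only for odd $w$, which is guaranteed here. As an optional sanity check, summing the resulting formulas against $2^a$ with the combinatorial identities \eqref{equ:CombId1}--\eqref{equ:CombId4} should reproduce Corollary~\ref{cor:ESWtSumFormula}.
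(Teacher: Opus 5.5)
Your verification is correct, but it runs in the opposite direction from the paper's proof.

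You start from the stated right-hand sides and note that each is $c\,Z_\calA^\eff(w)$ for a rational $c$. You then use linearity of $f_\ES$ together with $f_\ES(Z_\calA^\eff(w))\equiv\zeta(w)$ from Section~\ref{sec:depthOne}. The theorem thereby reduces to reading off the coefficient of $\zeta(w)$ in Corollary~\ref{cor:DBESreduce} after substituting $\zeta(\ol{w})=(2^{1-w}-1)\zeta(w)$. Your four coefficient checks are right; in the first two cases they go through $\binom{w-1}{a-1}+\binom{w-1}{b-1}=\binom{w}{a}$. The value of $f_\ES(Z_\calA^\eff(\ol{w}))$ is in fact never needed.

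The paper instead computes the symmetric sums $\zeta_\ast^\Sy(a,b;\eps_1,\eps_2)$ modulo $\zeta(2)$, using $\zeta_\ast(a;\eps_1)\zeta_\ast(b;\eps_2)\equiv 0$ and Corollary~\ref{cor:DBESreduce}. It inverts these to write each effective sum as the finite double Euler sum of the same signature corrected by depth-one terms, e.g.\ $Z_\calA^\eff(a,b)=\frac12\big(\zeta_\calA(a,b)-Z_\calA^\eff(w)\big)$. The closed forms then follow from the Bernoulli-number evaluations of $\zeta_\calA(a,b;\eps_1,\eps_2)$ in \cite[Thm.~8.6.4]{Zhao2016}.

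Your route is shorter and needs no finite-side evaluation. However, it only certifies that $c\,Z_\calA^\eff(w)$ is \emph{a} preimage of the double Euler sum. The paper's route derives the formula and shows, unconditionally in $\calA$, that it coincides with the natural finite expression built from $\zeta_\calA(a,b;\eps_1,\eps_2)$. With your argument, that identification would require injectivity of $f_\ES$. As you note, the paper assumes this anyway through Conjecture~\ref{conj:KanekoZagierAltVersion} when it speaks of ``the'' effective sum.
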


\begin{proof}
First, we observe that either $a$ or $b$ is even so that
\begin{equation*}
\zeta_\ast(a)\zeta_\ast(b)\equiv  \zeta_\ast(\ol{a})\zeta_\ast(\ol{b}) \equiv  \zeta_\ast(\ol{a})\zeta_\ast(b) \equiv  \zeta_\ast(a)\zeta_\ast(\ol{b}) \equiv  0 \pmod{\zeta(2)}.
\end{equation*}
By definition and Cor.~\ref{cor:DBESreduce}, modulo $\zeta(2)$
\begin{align*}
    \zeta_\ast^\Sy(a,b)=&\, \zeta_\ast(a,b)+(-1)^a \zeta_\ast(a)\zeta_\ast(b)-\zeta_\ast(b,a)\equiv2\zeta(a,b)+\zeta(w),\\
    \zeta_\ast^\Sy(\ol{a},\ol{b})=&\, \zeta_\ast(\ol{a},\ol{b})-(-1)^a \zeta_\ast(\ol{a})\zeta_\ast(\ol{b})-\zeta_\ast(\ol{b},\ol{a})\equiv2\zeta(\ol{a},\ol{b})+\zeta(w),\\
    \zeta_\ast^\Sy(\ol{a},b)=&\, \zeta_\ast(\ol{a},b)-(-1)^a \zeta_\ast(\ol{a})\zeta_\ast(b)+\zeta_\ast(b,\ol{a})\\
    \equiv &\, -\zeta(\ol{w})
    \equiv 2\zeta_\ast(\ol{a},b)-(-1)^a \bigg[\binom{w-1}{a-1}\zeta(\ol{w})+\binom{w-1}{b-1}\zeta(w)\bigg],,\\
    \zeta_\ast^\Sy(a,\ol{b})=&\, \zeta_\ast(a,\ol{b})+(-1)^a \zeta_\ast(a)\zeta_\ast(\ol{b})+\zeta_\ast(\ol{b},a)\\
   \equiv &\,  -\zeta(\ol{w})
    \equiv 2\zeta_\ast(a,\ol{b})- (-1)^a \bigg[\binom{w-1}{a-1}\zeta(w)+\binom{w-1}{b-1}\zeta(\ol{w})\bigg].
\end{align*}
We see that
\begin{align*}
Z_\calA^\eff(a,b)= &\, \frac12 \Big( \zeta_\calA(a,b)-Z_\calA^\eff(w)\Big),\\
Z_\calA^\eff(\ol{a},\ol{b})= &\, \frac12 \Big( \zeta_\calA(\ol{a},\ol{b})-Z_\calA^\eff(w) \Big)\\
Z_\calA^\eff(\ol{a},b)= &\,\frac12\bigg\{\zeta_\calA(\ol{a},b)+(-1)^a \bigg[\binom{w-1}{a-1}Z_\calA^\eff(\ol{w})+\binom{w-1}{b-1}Z_\calA^\eff(w)\bigg]\bigg\},\\
Z_\calA^\eff(a,\ol{b})= &\,\frac12\bigg\{\zeta_\calA(a,\ol{b})+(-1)^a \bigg[\binom{w-1}{a-1}Z_\calA^\eff(w)+\binom{w-1}{b-1}Z_\calA^\eff(\ol{w})\bigg]\bigg\}.
\end{align*}
The proposition follows from \cite[Thm. 8.6.4]{Zhao2016} at once.
\end{proof}

\begin{prop} \label{prop:StuffleRel} \emph{(Stuffle relations)}
For all admissible $(a,b)$ with odd weight $a+b$, we have the following relations:
\begin{align*}
  Z_\calA^\eff(a)Z_\calA^\eff(b)=&\, Z_\calA^\eff(a,b)+ Z_\calA^\eff(b,a)+ Z_\calA^\eff(a+b),\\
  Z_\calA^\eff(\ol{a})Z_\calA^\eff(b)=&\, Z_\calA^\eff(\ol{a},b)+ Z_\calA^\eff(b,\ol{a})+ Z_\calA^\eff(\ol{a+b}),\\
  Z_\calA^\eff(\ol{a})Z_\calA^\eff(\ol{b})=&\, Z_\calA^\eff(\ol{a},\ol{b})+ Z_\calA^\eff(\ol{b},\ol{a})+ Z_\calA^\eff(a+b).
\end{align*}
\end{prop}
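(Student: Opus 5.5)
The plan is to check that both sides vanish: the left-hand sides do so because of parity, and the right-hand sides do so because the coefficients given by Theorem~\ref{thm:effDBESoddWt} cancel. Write $w=a+b$, which is odd. Then exactly one of $a,b$ is even. The two depth-one effective values therefore fall into one of two cases.

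\emph{Case 1: neither argument equals $1$.} Then one factor is $Z_\calA^\eff(m)$ or $Z_\calA^\eff(\ol{m})$ with $m\ge2$ even. By \eqref{equ:effRiemann} and \eqref{equ:ZbarByZ}, this factor is a multiple of $B_{p-m}$. Since $p-m$ is odd and $>1$, we have $B_{p-m}=0$, so the factor is $0$.

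\emph{Case 2: one argument equals $1$.} Then the other argument is even. The product is $Z_\calA^\eff(1)\cdot(\cdots)=0$, or $\tq_2\cdot Z_\calA^\eff(\ol{m})$ with $m$ even, or $\tq_2\cdot Z_\calA^\eff(m)$ with $m$ even, and each of these is $0$.

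Hence all three left-hand sides are $0$. This matches the classical side, where $\zeta(m)$ and $\zeta(\ol{m})$ vanish modulo $\zeta(2)$ for even $m$.

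For the right-hand sides, I would substitute the closed formulas of Theorem~\ref{thm:effDBESoddWt}, expressing everything as a rational multiple of $Z_\calA^\eff(w)$. I would also use $Z_\calA^\eff(\ol{w})=(2^{1-w}-1)Z_\calA^\eff(w)$. The key observations are that $(-1)^b=-(-1)^a$ and that $\binom{w}{a}=\binom{w}{b}$. For example, in the first relation the coefficient is
\[
\tfrac12\big[(-1)^a\tbinom{w}{a}-1\big]+\tfrac12\big[(-1)^b\tbinom{w}{b}-1\big]+1=0 .
\]
In the second relation, write $Z_\calA^\eff(b,\ol{a})$ using the fourth formula of the theorem with the roles of $a$ and $b$ swapped. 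The $\pm$ bracket terms then cancel against those of $Z_\calA^\eff(\ol{a},b)$. What remains is
\[
(1-2^{1-w})+(2^{1-w}-1)=0 .
\]
The third relation works the same way via the second formula: the terms $(-1)^a(2^{1-w}-1)\binom{w}{a}$ cancel, and the two $-\tfrac12$'s cancel against $+Z_\calA^\eff(w)$.

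The main obstacle is that Theorem~\ref{thm:effDBESoddWt} is stated only for first argument $\ge2$. However, the swapped terms $Z_\calA^\eff(b,a)$, etc., may have $b=1$. To handle this, I would observe that the proof of the theorem used only Corollary~\ref{cor:DBESreduce}, which is stated for the stuffle-regularized $\zeta_\ast$ with arbitrary $a,b\in\N$. I would also use the fact that in depth two $\zeta_\ast$ and $\zeta_\sha$ agree modulo $\zeta(2)$. Hence the same formulas define (and characterize, under Hypothesis~\ref{Hyp:Key}) the effective values with first argument $1$. With this extension in place, the coefficient cancellation above goes through uniformly.

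As a consistency check, I would also verify the relations on the classical side under $f_\ES$, using the stuffle relation for $\zeta_\ast$ together with Corollary~\ref{cor:DBESreduce}.
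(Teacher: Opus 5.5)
Your proposal is correct and follows essentially the same route as the paper. The paper's one-line proof also substitutes the closed forms of Theorem~\ref{thm:effDBESoddWt} and uses \eqref{equ:ZbarByZ} together with $(-1)^a+(-1)^b=0$ to make every right-hand side vanish, matching your coefficient checks; it leaves the parity vanishing of the left-hand sides implicit, whereas you spell it out. You also make explicit a point the paper leaves unstated: the swapped terms with first argument $1$ (e.g.\ $Z_\calA^\eff(1,a)$ or $Z_\calA^\eff(\ol{1},b)$) need the theorem's formulas extended via Corollary~\ref{cor:DBESreduce}, and this extension is valid (strictly, Hypothesis~\ref{Hyp:Key} only gives existence of such representatives, and ``characterize'' would need the injectivity in Conjecture~\ref{conj:KanekoZagierAltVersion}, but this does not affect the argument).
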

\begin{proof}
These are straight-forward by \eqref{equ:ZbarByZ} and the fact that $(-1)^a+(-1)^b=0$.
\end{proof}

\begin{prop}\label{prop:SumFormula}   \emph{(Sum formulas})
For all positive odd integer $w\ge 3$, setting $v=w-1$ then we have
\begin{align*}
  \sum_{\substack{a\ge 2,b\ge 1\\ a+b=w}}  Z_\calA^\eff(a,b)=&\, Z_\calA^\eff(w),\\
  \sum_{\substack{a\ge 2,b\ge 1\\ a+b=w}}  Z_\calA^\eff(\ol{a},\ol{b})=&\,  Z_\calA^\eff(\ol{1},v)- Z_\calA^\eff(\ol{1},\ol{v})+Z_\calA^\eff(\ol{w}), \\
  \sum_{\substack{a\ge 2,b\ge 1\\ a+b=w}}  Z_\calA^\eff(\ol{a},b)=&\,  Z_\calA^\eff(\ol{v},\ol{1})+ Z_\calA^\eff(\ol{1},\ol{v})-Z_\calA^\eff(\ol{v},1)-Z_\calA^\eff(\ol{1},v)+Z_\calA^\eff(w), \\
  \sum_{\substack{a\ge 2,b\ge 1\\ a+b=w}}  Z_\calA^\eff(a,\ol{b})=&\, Z_\calA^\eff(\ol{v},1)-Z_\calA^\eff(\ol{v},\ol{1})+Z_\calA^\eff(\ol{w}).
\end{align*}
\end{prop}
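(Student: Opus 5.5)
The plan is to reduce every term in Proposition~\ref{prop:SumFormula} to a rational multiple of $Z_\calA^\eff(w)$ and then compare coefficients. By Theorem~\ref{thm:effDBESoddWt} each summand $Z_\calA^\eff(a,b)$, $Z_\calA^\eff(\ol a,\ol b)$, $Z_\calA^\eff(\ol a,b)$, $Z_\calA^\eff(a,\ol b)$ with $a\ge2$ is an explicit multiple of $Z_\calA^\eff(w)$. By \eqref{equ:ZbarByZ} we also have $Z_\calA^\eff(\ol w)=(2^{1-w}-1)Z_\calA^\eff(w)$. Both sides of each identity are then numbers times $Z_\calA^\eff(w)$, and each identity becomes a binomial-sum identity.

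\textbf{Step 1: the boundary terms.} The right-hand sides involve $(\ol1,v)$, $(\ol1,\ol v)$, $(\ol v,1)$ and $(\ol v,\ol 1)$. The last two have $a=v\ge2$, so Theorem~\ref{thm:effDBESoddWt} covers them directly. The first two have first entry $\ol1$, which Theorem~\ref{thm:effDBESoddWt} excludes. However, Proposition~\ref{prop:DBESreduce} is stated for all $a,b\in\N$ with stuffle regularization, and $(\ol1,\cdot)$ is convergent anyway. So I would rerun the proof of Theorem~\ref{thm:effDBESoddWt} verbatim with $a=1$ in the barred cases. The product terms still vanish modulo $\zeta(2)$ because $v$ is even and $\zeta(v),\zeta(\ol v)\in\zeta(2)\ES$. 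This shows that the formulas for $Z_\calA^\eff(\ol a,\ol b)$ and $Z_\calA^\eff(\ol a,b)$ remain valid at $a=1$. Making this extension legitimate is the one conceptual point; I expect it to go through since \cite[Thm.~8.6.4]{Zhao2016} gives $\zeta_\calA(\ol1,v)$ and $\zeta_\calA(\ol1,\ol v)$ in the same Bernoulli form.

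\textbf{Step 2: the binomial sums.} With $w$ odd, the needed sums over $2\le a\le w-1$ come from expanding $(1-1)^w=0$ and $(1-1)^{w-1}=0$ and removing boundary terms:
\begin{equation*}
\sum_{a=2}^{w-1}(-1)^a\binom{w}{a}=w,\qquad
\sum_{a=2}^{w-1}(-1)^a\binom{w-1}{a-1}=1,\qquad
\sum_{a=2}^{w-1}(-1)^a\binom{w-1}{b-1}=-(w-2).
\end{equation*}
These play the role here that \eqref{equ:CombId1}--\eqref{equ:CombId4} played in Corollary~\ref{cor:ESWtSumFormula}, now without the weights $2^a$. For example, the first identity becomes $\tfrac12\big[w-(w-2)\big]Z_\calA^\eff(w)=Z_\calA^\eff(w)$, which checks immediately. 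For the barred identities I would put $c=2^{1-w}-1$, so that $Z_\calA^\eff(\ol w)=cZ_\calA^\eff(w)$. I would then write both sides as $(\alpha+\beta c)Z_\calA^\eff(w)$ and match $\alpha$ and $\beta$ separately.

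\textbf{Main obstacle.} The main difficulty is bookkeeping. The binomials $\binom{w-1}{a-1}$ and $\binom{w-1}{b-1}$ swap roles between the $(\ol a,b)$ and $(a,\ol b)$ types. The signs $(-1)^a$ at the boundary values $a=1$ and $a=v$ (odd versus even) also have to be tracked carefully. As a safeguard, I would apply $f_\ES$ to both sides and check the result against Corollary~\ref{cor:DBESreduce} in a small case such as $w=3$ or $w=5$. This catches any sign slip before writing out the general coefficient comparison.
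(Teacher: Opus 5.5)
Your strategy is the paper's: write every term as a rational multiple of $Z_\calA^\eff(w)$ using Theorem~\ref{thm:effDBESoddWt} and \eqref{equ:ZbarByZ}, then compare coefficients. Your Step~1 is a real gain in rigor. The paper's proof silently evaluates $Z_\calA^\eff(\ol1,v)$ and $Z_\calA^\eff(\ol1,\ol v)$ with the formulas of Theorem~\ref{thm:effDBESoddWt}, whose hypothesis is $a\ge2$. You do not need \cite[Thm.~8.6.4]{Zhao2016} to justify this. Proposition~\ref{prop:DBESreduce} holds at $a=1$, so the effective value is simply the same linear combination of $Z_\calA^\eff(w)$ and $Z_\calA^\eff(\ol w)$. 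With your $c=2^{1-w}-1$ this gives $Z_\calA^\eff(\ol1,v)=-\frac12(2c+w-1)Z_\calA^\eff(w)$ and $Z_\calA^\eff(\ol1,\ol v)=-\frac12(cw+1)Z_\calA^\eff(w)$.

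The problem is Step~2: two of your three binomial sums are wrong. Since $w-1$ is even,
\begin{equation*}
\sum_{a=2}^{w-1}(-1)^a\binom{w-1}{a-1}=-\sum_{k=1}^{w-2}(-1)^k\binom{w-1}{k}=2,\qquad \sum_{a=2}^{w-1}(-1)^a\binom{w-1}{b-1}=\sum_{a=2}^{w-1}(-1)^a\binom{w-1}{a}=w-2,
\end{equation*}
not $1$ and $-(w-2)$. Two checks expose this. By $\binom{w-1}{a-1}+\binom{w-1}{a}=\binom{w}{a}$, the two sums must add to your correct first sum $w$, but yours add to $3-w$. At $w=3$ the only term is $a=2$, which gives $2$ and $1$.

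With your values the third and fourth identities do not close. For example, the left side of the $(a,\ol b)$ formula would come out as $\big(\frac12-(w-2)c\big)Z_\calA^\eff(w)$, while the right side is $Z_\calA^\eff(w)$. With the correct values everything matches. You also need $Z_\calA^\eff(\ol v,1)=\frac12\big((w-2)c+1\big)Z_\calA^\eff(w)$ and $Z_\calA^\eff(\ol v,\ol1)=\frac12(cw-1)Z_\calA^\eff(w)$. Then:
\begin{itemize}
\item both sides of the $(\ol a,\ol b)$ formula equal $\frac12(cw-w+2)Z_\calA^\eff(w)$;
\item both sides of the $(\ol a,b)$ formula equal $\frac12\big(w-2-(w-4)c\big)Z_\calA^\eff(w)$;
\item both sides of the $(a,\ol b)$ formula equal $Z_\calA^\eff(w)$.
\end{itemize}
So the argument is sound once these two sums are corrected.
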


\begin{rem}
One should compare these with the sum formulas of double Euler sums given in \cite[Theorem 4.2]{BCJXXZhao2020c} which have exactly the same form.
\end{rem}

\begin{proof}
By the binomial expansion
\begin{align*}
  \sum_{\substack{a\ge 2,b\ge 1\\ a+b=w}}  (-1)^a \binom{w}{a} =
  (1-1)^w-\binom{w}{0}+\binom{w}{1}-(-1)^w \binom{w}{w} =w
\end{align*}
we see that
\begin{align*}
  \sum_{\substack{a\ge 2,b\ge 1\\ a+b=w}}  Z_\calA^\eff(a,b)
  =&\, \frac12 Z_\calA^\eff(w)\sum_{\substack{a\ge 2,b\ge 1\\ a+b=w}}  \left[(-1)^a \binom{w}{a}-1\right]
  =Z_\calA^\eff(w).
\end{align*}
Similarly,
\begin{align*}
  \sum_{\substack{a\ge 2,b\ge 1\\ a+b=w}}  Z_\calA^\eff(\ol{a},\ol{b})
  =&\, \frac12 Z_\calA^\eff(w)\sum_{\substack{a\ge 2,b\ge 1\\ a+b=w}}  \left[(-1)^a (2^{1-w}-1)\binom{w}{a}-1 \right] \\
  =&\,  \frac12 Z_\calA^\eff(w)\left( w(2^{1-w}-1)-(w-2)  \right)
\end{align*}
which implies the sum formula for $Z_\calA^\eff(\ol{a},\ol{b})$ by Theorem~\ref{thm:effDBESoddWt}. The proof of the other two formulas are similar and are thus omitted.
\end{proof}

\begin{prop}\label{prop:WtSumFormula} \emph{(Weighted sum formulas)}
For all positive odd integer $w\ge 3$,
\begin{align}
  \sum_{\substack{a\ge 2,b\ge 1\\ a+b=w}} 2^a  Z_\calA^\eff(a,b)=&\, (w+1) Z_\calA^\eff(w), \label{equ:OddWtSum(a,b)}\\
  \sum_{\substack{a\ge 2,b\ge 1\\ a+b=w}} 2^a Z_\calA^\eff(\ol{a},\ol{b})=&\, (w-1)Z_\calA^\eff(\ol{w})+(3-2^w)Z_\calA^\eff(w),\label{equ:WtSum(bara,barb)}\\
  \sum_{\substack{a\ge 2,b\ge 1\\ a+b=w}} 2^a Z_\calA^\eff(\ol{a},b)=&\, 2Z_\calA^\eff(\ol{w}) + (w-1)Z_\calA^\eff(w),\notag \\
  \sum_{\substack{a\ge 2,b\ge 1\\ a+b=w}} 2^a Z_\calA^\eff(a,\ol{b})=&\, (w+1)Z_\calA^\eff(\ol{w})+(2^w-1)Z_\calA^\eff(w).\label{equ:WtSum(a,barb)}
\end{align}
Consequently,
\begin{align}\label{equ:WtSum(a,barb)AND(a,barb)}
\sum_{\substack{a\ge 2,b\ge 1\\ a+b=w}}  2^a \Big(Z_\calA^\eff(\ol{a},\ol{b})+ Z_\calA^\eff(a,\ol{b})\Big)
=&\,2 Z_\calA^\eff(w)+ 2w Z_\calA^\eff(\ol{w}).
\end{align}
\end{prop}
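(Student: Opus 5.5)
The plan is a direct substitution. By Theorem~\ref{thm:effDBESoddWt}, each effective double Euler sum of odd weight $w$ is an explicit rational multiple of $Z_\calA^\eff(w)$. By \eqref{equ:ZbarByZ}, $Z_\calA^\eff(\ol{w})=(2^{1-w}-1)Z_\calA^\eff(w)$. Consequently both sides of each of the four identities are rational multiples of $Z_\calA^\eff(w)$, and it suffices to compare the coefficients. Each left-hand side is $\tfrac12 Z_\calA^\eff(w)$ times a weighted sum over $a$. That sum splits into a term $\sum 2^a$, possibly times $(1-2^{1-w})$, plus terms of the form $\sum (-2)^a\binom{w}{a}$, $\sum (-2)^a\binom{w-1}{a-1}$ or $\sum(-2)^a\binom{w-1}{b-1}$. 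We will use $(-1)^a2^a=(-2)^a$ throughout.

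These four sums are evaluated by the combinatorial identities \eqref{equ:CombId1}--\eqref{equ:CombId4} from the proof of Corollary~\ref{cor:ESWtSumFormula}. The sum with $\binom{w-1}{b-1}=\binom{w-1}{a}$ is exactly \eqref{equ:CombId3}, and $\binom{w}{a}$ is handled by \eqref{equ:CombId2}. For $(a,b)$, for example, the coefficient is
\[
\tfrac12\big[(2^w+2w-2)-(2^w-4)\big]=w+1,
\]
which gives \eqref{equ:OddWtSum(a,b)}. For $(\ol a,\ol b)$ the coefficient is $\tfrac12[(2^{1-w}-1)(2^w+2w-2)-(2^w-4)]$. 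We will check that this equals $(w-1)(2^{1-w}-1)+3-2^w$. The mixed cases go the same way, using $2^w$ from \eqref{equ:CombId4} and $2w-2$ from \eqref{equ:CombId3}, and each result is rewritten in terms of $Z_\calA^\eff(\ol w)$ and $Z_\calA^\eff(w)$.

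A cleaner alternative, and a good cross-check, runs through Corollary~\ref{cor:ESWtSumFormula}. The right-hand sides there are exactly the images under $f_\ES$ of the claimed right-hand sides, modulo $\zeta(2)$, and $f_\ES$ sends $Z^\eff$ to $\zeta$. Conversely, all the quantities here lie in the one-dimensional span of $Z_\calA^\eff(w)$. So the coefficient computation is the rigorous route, with the corollary serving as the consistency check.

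Finally, \eqref{equ:WtSum(a,barb)AND(a,barb)} follows by adding \eqref{equ:WtSum(bara,barb)} and \eqref{equ:WtSum(a,barb)}. This gives $2wZ_\calA^\eff(\ol w)+2Z_\calA^\eff(w)$, since the terms $(3-2^w)$ and $(2^w-1)$ combine to $2$.

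The main obstacle is bookkeeping: keeping track of signs and of the factor $2^{1-w}$ in the mixed-sign cases. For these I will first verify the identities numerically at $w=3$ and $w=5$ before writing the general algebra.
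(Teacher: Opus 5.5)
Your proposal is correct and is essentially the paper's proof. You substitute the closed forms from Theorem~\ref{thm:effDBESoddWt}, use $(-1)^a2^a=(-2)^a$ and \eqref{equ:ZbarByZ}, and evaluate the coefficients with \eqref{equ:CombId1}--\eqref{equ:CombId4}; your observation $\binom{w-1}{b-1}=\binom{w-1}{a}$ is exactly what makes \eqref{equ:CombId3} apply. Adding the $(\ol{a},\ol{b})$ and $(a,\ol{b})$ identities then gives the final formula. You are also right to use Corollary~\ref{cor:ESWtSumFormula} only as a consistency check, since turning it into a proof would require the conjectural injectivity of $f_\ES$ (or $\zeta(w)\not\equiv 0 \pmod{\zeta(2)}$).
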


\begin{rem}
One should compare these with the weighted sum formulas of double Euler sums given in Cor.~\ref{cor:ESWtSumFormula}. Note that the corresponding formulas for \eqref{equ:WtSum(bara,barb)} and \eqref{equ:WtSum(a,barb)} hold modulo $\zeta(2)$ there but the sum of these two as given in \eqref{equ:WtSum(a,barb)AND(a,barb)} has an exact counterpart for double Euler sums as given by \eqref{equ:WtSumES(a,barb)AND(a,barb)}.
\end{rem}

\begin{proof}
By the binomial expansion identity \eqref{equ:CombId1} and  \eqref{equ:CombId2},
\begin{align*}
  \sum_{\substack{a\ge 2,b\ge 1\\ a+b=w}} 2^a Z_\calA^\eff(a,b)
  =&\, \frac12 Z_\calA^\eff(w)\sum_{\substack{a\ge 2,b\ge 1\\ a+b=w}}   2^a \left[(-1)^a \binom{w}{a}-1\right] \\
  =&\, \frac12 Z_\calA^\eff(w) \Big[2^w+2w-2-(2^w-4)\Big]
  =(w+1)Z_\calA^\eff(w).
\end{align*}
Similarly,
\begin{align*}
  \sum_{\substack{a\ge 2,b\ge 1\\ a+b=w}} 2^a Z_\calA^\eff(\ol{a},\ol{b})
  =&\, \frac12 Z_\calA^\eff(w)\sum_{\substack{a\ge 2,b\ge 1\\ a+b=w}}   2^a \Big[(-1)^a (2^{1-w}-1)\binom{w}{a}-1 \Big] \\
  =&\, \frac12 Z_\calA^\eff(w) \Big[(2^{1-w}-1)(2^w+2w-2)-(2^w-4)\Big]
\end{align*}
which implies the second formula in the proposition.

Next,  \eqref{equ:CombId1}, \eqref{equ:CombId3} and  \eqref{equ:CombId4} yield that
\begin{align*}
  \sum_{\substack{a\ge 2,b\ge 1\\ a+b=w}} 2^a Z_\calA^\eff(\ol{a},b)
  =&\, \frac12 Z_\calA^\eff(w)\sum_{\substack{a\ge 2,b\ge 1\\ a+b=w}}   2^a \bigg\{1-2^{1-w}+(-1)^a \bigg[\binom{w-1}{a-1}(2^{1-w}-1)+\binom{w-1}{b-1}\bigg]\bigg\} \\
  =&\, \frac12 Z_\calA^\eff(w) \Big[(1-2^{1-w})(2^w-4)+2^w(2^{1-w}-1) +2(w-1)\Big]\\
  =&\,  Z_\calA^\eff(w) \Big[2^{2-w}-3+w\Big]\\
  =&\, 2Z_\calA^\eff(\ol{w}) + (w-1)Z_\calA^\eff(w).
\end{align*}
In the same way, \eqref{equ:CombId1}, \eqref{equ:CombId3} and  \eqref{equ:CombId4} yield that
\begin{align*}
  \sum_{\substack{a\ge 2,b\ge 1\\ a+b=w}} 2^a Z_\calA^\eff(a,\ol{b})
  =&\, \frac12 Z_\calA^\eff(w)\sum_{\substack{a\ge 2,b\ge 1\\ a+b=w}}   2^a\bigg\{1-2^{1-w} +(-1)^a \bigg[\binom{w-1}{a-1}+\binom{w-1}{b-1}(2^{1-w}-1)\bigg]\bigg\}\\
  =&\, \frac12 Z_\calA^\eff(w) \Big[(1-2^{1-w})(2^w-4)+2^{w}+ 2 (w-1)(2^{1-w}-1) \Big]\\
  =&\,  Z_\calA^\eff(w) \Big[2^w-1+(2^{1-w}-1) (w+1)\Big]
\end{align*}
which implies \eqref{equ:WtSum(a,barb)}. Finally, \eqref{equ:WtSum(a,barb)AND(a,barb)} quickly follows from \eqref{equ:WtSum(bara,barb)} and \eqref{equ:WtSum(a,barb)}. This completes the proof of the proposition.
\end{proof}

\section{Parity Principle For Regularized Triple Euler Sums of Even Weight}\label{sec:Parity}
In this section, we generalize the parity result of Panzer for triple Euler sums to the divergent cases. In this section and throughout the rest of the paper, we always use $\eps_j$'s to represent  $\pm 1$ if we do not mention otherwise.

\begin{thm}\label{thm:PanzerDivES}
Let $a,b,c\in\N$ with $w=a+b+c$ even. Then modulo $\zeta(2)$ we have
\begin{align*}
&\,  2 \zeta_*(n_1,n_2,n_3;\eps_1,\eps_2,\eps_3) \\
\equiv &\,2\zeta_*(n_3;\eps_3)\zeta_*(n_1,n_2;\eps_1,\eps_2)-\zeta_*(n_1,n_2+n_3;\eps_1,\eps_2\eps_3)+\zeta_*(n_3,n_1+n_2;\eps_3,\eps_1\eps_2) \\
&\, +(-1)^{n_2} \sum_{\substack{\mu+\nu=w\\ \mu\ge n_1,\nu\ge n_3}} \binom{\mu-1}{n_1-1} \binom{\nu-1}{n_3-1} \zeta_*(\mu;\eps_1) \zeta_*(\nu;\eps_3) \\
&\, -(-1)^{n_1}\sum_{\substack{\mu+\nu=w\\ \mu\ge n_2,\nu\ge n_3}}  \binom{\mu-1}{n_2-1} \binom{\nu-1}{n_3-1} \zeta_*(\nu,\mu;\eps_3,\eps_2)    \\
&\, -(-1)^{n_3}\sum_{\substack{\mu+\nu=w\\ \mu\ge n_2,\nu\ge n_1}}  \binom{\mu-1}{n_2-1} \binom{\nu-1}{n_1-1} \zeta_*(\nu,\mu;\eps_1,\eps_2) .
\end{align*}
\end{thm}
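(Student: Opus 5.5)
The plan is to reduce the divergent cases to Panzer's parity theorem for admissible triple Euler sums from \cite{Panzer2017}, using stuffle regularization. Panzer's result says that when $w=n_1+n_2+n_3$ is even and $(n_1,n_2,n_3;\eps_1,\eps_2,\eps_3)$ is admissible, $\zeta(n_1,n_2,n_3;\eps_1,\eps_2,\eps_3)$ is, modulo products and modulo $\zeta(2)$, a combination of double and single sums. I would first restate that theorem in exactly the shape displayed above. This means working modulo $\zeta(2)$ but keeping the product term $\zeta(n_3;\eps_3)\zeta(n_1,n_2;\eps_1,\eps_2)$ and the products $\zeta(\mu;\eps_1)\zeta(\nu;\eps_3)$. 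I would then check that the displayed right-hand side agrees with Panzer's formula after applying the (exact) depth-two stuffle relations. This settles the admissible case, $(n_1,\eps_1)\ne(1,1)$.

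For the non-admissible case $n_1=1$, $\eps_1=1$, the key observation is that both sides are images under the stuffle-regularization map $\mathrm{reg}_*$ ($T\mapsto 0$) of formal expressions in the stuffle algebra. I would extend the identity to a polynomial identity in $T$ by the standard device: write every quantity as a formal sum over truncated (finite $N$) nested sums. The approach is to prove that the relation, read as an identity in the quasi-shuffle algebra $\mathfrak{H}^1$ of words in letters $z_{n,\eps}$, holds modulo the ideal generated by $\zeta(2)$ and by the kernel of evaluation on admissible words. Concretely, I would use the decomposition
\begin{equation*}
\zeta_*(1,n_2,n_3;1,\eps_2,\eps_3)=\zeta_*(1)\zeta_*(n_2,n_3;\eps_2,\eps_3)-\zeta_*(n_2,1,n_3;\eps_2,1,\eps_3)-\zeta_*(n_2,n_3,1;\eps_2,\eps_3,1)-\zeta_*(n_2+1,n_3;\eps_2,\eps_3)-\zeta_*(n_2,n_3+1;\eps_2,\eps_3)
\end{equation*}
and set $\zeta_*(1)=T=0$. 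Each triple sum on the right is admissible unless $n_2=\eps_2=1$; in that case I iterate once more, which also handles $(1,1,n_3)$ and $(1,1,1)$. After this, Panzer's admissible result applies to each admissible triple term. The remaining double and single terms are combined using Proposition~\ref{prop:DBESreduce} where the weight is odd, and using exact depth-two stuffle relations otherwise.

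The main obstacle is the bookkeeping in this last step. The right-hand side of the theorem also contains divergent pieces, for example $\zeta_*(\mu;\eps_1)$ with $\mu=1$, the term $\zeta_*(n_1,n_2+n_3)$, and $\zeta_*(\nu,\mu;\eps_1,\eps_2)$ with $\nu=1$. I have to show that the binomial sums recombine exactly with the terms produced by the stuffle expansion. I would first try to verify this at the level of generating functions: multiply by $x^{n_1-1}y^{n_2-1}z^{n_3-1}$ and sum, so that $\sum\binom{\mu-1}{n_1-1}\binom{\nu-1}{n_3-1}$ becomes a substitution $(x,z)\mapsto$ a single-variable generating series. The identity then reduces to a functional equation which I expect to be symmetric under the regularization shift. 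I would check numerically, in low weight such as $w=4,6$, that the constant terms in $T$ match. A cleaner alternative, if it works, is to argue that both sides define $T$-polynomials satisfying the same stuffle derivation rule $\partial_T$, which lowers depth. Since they agree on admissible arguments, and the lower-depth statements hold by Proposition~\ref{prop:DBESreduce} and depth one, the identity then follows at $T=0$.
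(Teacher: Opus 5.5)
Your proposal is essentially the paper's proof: Panzer's formula for $n_1>1$, then the stuffle expansion of $\zeta_*(1;\eps_1)\zeta_*(n_2,n_3;\eps_2,\eps_3)$ to trade $(1,n_2,n_3)$ for the admissible $(n_2,1,n_3)$ and $(n_2,n_3,1)$, Panzer applied to these, and a direct recombination (the paper notes that the leftover $\zeta_*(n_3;\eps_3)\zeta_*(1;\eps_1)\zeta_*(n_2;\eps_2)$ vanishes modulo $\zeta(2)$ because $n_2$ or $n_3$ is even), with $(1,1,n_3)$ pushed back to a settled case by one more stuffle step; the one difference is that the paper runs this reduction for every $\eps_1$, so unlike you it never invokes Panzer with a leading $\ol{1}$. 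Your suggested shortcut via a common $\partial_T$-rule cannot replace that recombination, because equal $T$-derivatives only make the two sides differ by a constant, and that constant at $T=0$ is exactly what the theorem asserts, so the explicit (routine) bookkeeping carried out in the paper is the proof.
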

\begin{proof}
When $a>1$, this is a level two case of \cite[(4.3)]{Panzer2017} proved by Panzer. Hence, we only need to deal with the divergent cases of the theorem.

When $(n_1,n_2,n_3)=(1,b,c)$ and $b>1$, by the stuffle relation
\begin{align*}
 \zeta_*(1,b,c;\eps_1,\eps_2,\eps_3)=\zeta_*(1;\eps_1)\zeta_*(b,c;\eps_2,\eps_3) -\zeta(b,1,c;\eps_2,\eps_1,\eps_3)-\zeta(b,c,1;\eps_2,\eps_3,\eps_1)\\
 -\zeta(b+1,c;\eps_1\eps_2,\eps_3)-\zeta(b,c+1;\eps_1,\eps_2\eps_3).
\end{align*}
The two triple zeta values on the right-hand side are both admissible and therefore can be dealt with using
Panzer's original formula. By a routine though a little tedious computation we can see that it suffices to
prove the following three expressions add up to
$2\zeta_*(1;\eps_1)\zeta_*(b,c;\eps_2,\eps_3)-2\zeta(b+1,c;\eps_1\eps_2,\eps_3)-2\zeta(b,c+1;\eps_2,\eps_1\eps_3)$ modulo $\zeta(2)$ (where $w=b+c+1$):
\begin{align*}
2\zeta_*(1,b,c;\eps_1,\eps_2,\eps_3)
\overset{?}{\equiv} &\, 2\zeta_*(c;\eps_3)\zeta_*(1,b;\eps_1,\eps_2)-\zeta_*(1,b+c;\eps_1,\eps_2\eps_3)+\zeta_*(c,b+1;\eps_3,\eps_1\eps_2) \\
&\, +(-1)^{b} \sum_{\mu+\nu=w}  \binom{\nu-1}{c-1} \zeta_*(\mu;\eps_1) \zeta_*(\nu;\eps_3)
    +(-1)^{b}\sum_{\mu+\nu=w}  \binom{\mu-1}{b-1}  \zeta_*(\nu,\mu;\eps_1,\eps_2)\\
&\,+c \zeta(c+1,b;\eps_3,\eps_2) +b\zeta_*(c,b+1;\eps_3,\eps_2)   ,\\
2\zeta(b,1,c;\eps_2,\eps_1,\eps_3)
\equiv &\, 2\zeta_*(c;\eps_3)\zeta_*(b,1;\eps_2,\eps_1)-\zeta_*(b,c+1;\eps_2,\eps_1\eps_3)+\zeta_*(c,b+1;\eps_3,\eps_2\eps_1) \\
&\, -c\zeta_*(b;\eps_2) \zeta_*(c+1;\eps_3)-b\zeta_*(b+1;\eps_2) \zeta_*(c;\eps_3) \\
&\, -(-1)^{b}\sum_{\mu+\nu=w}  \binom{\nu-1}{c-1} \zeta_*(\nu,\mu;\eps_3,\eps_1)
     +(-1)^{b}\sum_{\mu+\nu=w}  \binom{\nu-1}{b-1} \zeta_*(\nu,\mu;\eps_2,\eps_1),\\
2\zeta_*(b,c,1;\eps_2,\eps_3,\eps_1))
\equiv &\, 2\zeta_*(1;\eps_1)\zeta_*(b,c;\eps_2,\eps_3)-\zeta_*(b,c+1;\eps_2,\eps_1\eps_3)+\zeta_*(1,b+c;\eps_1,\eps_2\eps_3) \\
&\, -(-1)^{b} \sum_{\mu+\nu=w} \binom{\mu-1}{b-1} \zeta_*(\mu;\eps_2) \zeta_*(\nu;\eps_1)
      -(-1)^{b}\sum_{\mu+\nu=w}  \binom{\mu-1}{c-1} \zeta_*(\nu,\mu;\eps_1,\eps_3)    \\
&\, +b\zeta_*(b+1,c;\eps_2,\eps_3)+c\zeta_*(b,c+1;\eps_2,\eps_3) .
\end{align*}
Hence, the sum of the right-hand side of the above three equations divided by 2 is equal to
\begin{multline*}
 \zeta_*(c;\eps_3)\zeta_*(1,b;\eps_1,\eps_2)
 +\zeta_*(c;\eps_3)\zeta_*(b,1;\eps_2,\eps_1)-\zeta_*(b,c+1;\eps_2,\eps_1\eps_3) \\
   +\zeta_*(c,b+1;\eps_3,\eps_1\eps_2)+\zeta_*(1;\eps_1)\zeta_*(b,c;\eps_2,\eps_3) \\
\equiv   \zeta_*(1;\eps_1)\zeta_*(b,c;\eps_2,\eps_3)+ \zeta_*(c;\eps_3)\zeta_*(1;\eps_1)\zeta_*(b;\eps_2)- \zeta_*(c;\eps_3)\zeta_*(b+1;\eps_1\eps_2)-\zeta_*(b,c+1;\eps_2,\eps_1\eps_3).
\end{multline*}
But the term $\zeta_*(c;\eps_3)\zeta_*(1;\eps_1)\zeta_*(b;\eps_2)$ vanishes since either $b$ or $c$ is even. This proves the theorem when $(n_1,n_2,n_3)=(1,b,c)$ and $b>1$.

When $(n_1,n_2,n_3)=(1,1,a)$, we can use stuffle relations to get
\begin{align*}
 &\, \zeta_*(1,1,a;\eps_1,\eps_2,\eps_3)\equiv
\zeta(a;\eps_3)\zeta_*(1,1;\eps_1,\eps_2)-\zeta_*(1,a,1;\eps_1,\eps_3,\eps_2)\\
&\, \hskip4cm -\zeta(a+1,1;\eps_1\eps_3,\eps_2)-\zeta_*(1,a+1;\eps_1,\eps_2\eps_3)-\zeta_*(a,1,1;\eps_3,\eps_1,\eps_2)\\
\equiv &\, -\zeta_*(1;\eps_1)\zeta_*(a,1;\eps_3,\eps_2)
+\zeta_*(a,2;\eps_3,\eps_1\eps_2) +\zeta_*(a,1,1;\eps_3,\eps_2,\eps_1)
-\zeta_*(1,a+1;\eps_1,\eps_2\eps_3).
\end{align*}
Using the expression of $\zeta(a,1,1;\eps_3,\eps_2,\eps_1)$ proved already
\begin{align*}
 &\,  2 \zeta(a,1,1;\eps_3,\eps_2,\eps_1)
\equiv 2\zeta_*(1;\eps_1)\zeta_*(a,1;\eps_3,\eps_2)-\zeta_*(a,2;\eps_3,\eps_2\eps_1)+\zeta_*(1,a+1;\eps_1,\eps_3\eps_2) \\
&\, -a \zeta_*(1;\eps_1) \zeta_*(a+1;\eps_3)
 -\sum_{\mu+\nu=w}   \zeta_*(\nu,\mu;\eps_1,\eps_2)
+\zeta_*(a,2;\eps_3,\eps_2)  +a\zeta_*(a+1,1;\eps_3,\eps_2),
\end{align*}
we get
\begin{align}
&\, 2\zeta_*(1,1,a;\eps_1,\eps_2,\eps_3)
\equiv \zeta_*(a,2;\eps_3,\eps_1\eps_2)
-\zeta_*(1,a+1;\eps_1,\eps_2\eps_3) \notag\\
&\, -a \zeta_*(1;\eps_3) \zeta_*(a+1;\eps_1)
 -\sum_{\mu+\nu=w}   \zeta_*(\nu,\mu;\eps_1,\eps_2)
+\zeta_*(a,2;\eps_3,\eps_2)  +a\zeta_*(a+1,1;\eps_3,\eps_2)   \label{equ:zeta(1,1,a)}
\end{align}
which is exactly the right-hand side of the equation in Theorem~\ref{thm:PanzerDivES} when $(n_1,n_2,n_3)=(1,1,a)$
since $\zeta_*(a;\eps_3)\zeta_*(1,1;\eps_1,\eps_2)\equiv 0 \pmod{\zeta(2)}$.
This completes the proof of the theorem.
\end{proof}

\begin{cor}\label{cor:zeta(a,1,b-1);e1,e2,e3}
Suppose $a,b\in\N$ and $w=a+b\ge 4$ is even. Then
\begin{align}
&\,  2 \zeta_*(a,1,b-1;\eps_1,\eps_2,\eps_3) \equiv
 \zeta_*(b-1;\eps_3)\zeta(a+1; \eps_2)-\zeta(a+1,b-1; \eps_1\eps_2,\eps_3)-\zeta_*(a,b;\eps_1,\eps_2\eps_3) \notag\\
&\, -(b-1) \zeta_*(a;\eps_1) \zeta_*(b;\eps_3)
  -(-1)^{a}\sum_{s=1}^{w-1}\left[ \binom{s-1}{b-2} \zeta_*(s,w-s;\eps_3,\eps_2) -\binom{s-1}{a-1}\zeta_*(s,w-s;\eps_1,\eps_2)\right],  \label{equ:a1b-1ES}
\end{align}
and for even $a\ge 2$ and $w=a+2$
\begin{align}
  2 \zeta(a,1,1;\eps_1,\eps_2,\eps_3)
\equiv &\,\zeta(a,2;\eps_1,\eps_2)-\zeta(a,2;\eps_1,\eps_2\eps_3) -\zeta(a+1,1;\eps_1\eps_2,\eps_3) \notag \\
 &\,+a\zeta(a+1,1;\eps_1,\eps_2)+\zeta(a+1,1;\eps_2,\eps_3)  -\sum_{s=2}^{w-1}\zeta(s,w-s;\eps_3,\eps_2) . \label{equ:a11ES}
\end{align}
\end{cor}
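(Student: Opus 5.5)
This corollary is a direct specialization of Theorem~\ref{thm:PanzerDivES}. We take $(n_1,n_2,n_3)=(a,1,b-1)$, so the weight is $w=a+b$, which is even. With $n_2=1$ and $n_1=a$, the signs in the last three lines of the theorem become $(-1)^{n_2}=-1$, $-(-1)^{n_1}=-(-1)^a$ and $-(-1)^{n_3}=-(-1)^{b-1}=(-1)^{a}$. The last equality holds because $a\equiv b\pmod 2$ when $w$ is even.

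\textbf{Step 1: the product term.} The two $\binom{\mu-1}{n_2-1}$ factors are identically $1$. In the first double sum, $\binom{\mu-1}{a-1}\binom{\nu-1}{b-2}\zeta_*(\mu;\eps_1)\zeta_*(\nu;\eps_3)$, every product with $\mu,\nu$ both even is $\equiv0 \pmod{\zeta(2)}$. Since $\mu+\nu=w$ is even, only terms with $\mu,\nu$ both odd survive. Modulo $\zeta(2)$, the depth-one Euler sums of odd weight $\ge 3$ are rational multiples of $\zeta(\text{odd})$ or $\zeta(\ol{\text{odd}})$. So a product of two of them is not obviously zero, and I need a further reduction.

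I would rewrite these products using the stuffle product,
\[
\zeta(\mu)\zeta(\nu)=\zeta(\mu,\nu)+\zeta(\nu,\mu)+\zeta(\mu+\nu),
\]
and combine the result with the remaining double sums. Then I would compare the total with the stated form. The target has a single product $-(b-1)\zeta_*(a;\eps_1)\zeta_*(b;\eps_3)$ together with the two clean sums $\mp(-1)^a\sum_s\binom{s-1}{\cdot}\zeta_*(s,w-s;\cdot,\cdot)$. So I expect the product sum to collapse to the boundary term $\mu=a$, $\nu=b$, whose coefficient is $\binom{b-1}{b-2}=b-1$. The remaining terms should be absorbed into the other displayed terms, namely $\zeta_*(b-1;\eps_3)\zeta(a+1;\eps_2)$ and $-\zeta(a+1,b-1;\eps_1\eps_2,\eps_3)$.

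\textbf{Step 2: the first-line terms.} Here I would use stuffle to rewrite $2\zeta_*(b-1;\eps_3)\zeta_*(a,1;\eps_1,\eps_2)$ and $\zeta_*(b-1,a+1;\eps_3,\eps_1\eps_2)$. Proposition~\ref{prop:DBESreduce} is not available for these, since it applies only to odd weight, and $w$ is even here. Concretely, the stuffle product of $\zeta(b-1)$ with $\zeta(a,1)$ produces triple sums. I would instead try to treat $\zeta_*(a,1)\zeta_*(b-1)$ by expanding $\zeta_*(a,1)$ through the odd-weight reduction: $a+1$ is odd precisely when $a$ is even. I would then match the result term by term against the target expression.

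\textbf{Step 3: the case $(a,1,1)$.} For \eqref{equ:a11ES} I would set $b=2$ in \eqref{equ:a1b-1ES}. With $b=2$ we have $\binom{s-1}{0}=1$, so the first sum becomes $\sum_s\zeta(s,w-s;\eps_3,\eps_2)$. The products $\zeta(1)\zeta(a+1)$, which are regularized, would be expanded by stuffle into $\zeta(a+1,1)$-type terms. That expansion should produce the terms $a\zeta(a+1,1;\eps_1,\eps_2)$ and $\zeta(a+1,1;\eps_2,\eps_3)$. Alternatively, one can read off this case from the identity \eqref{equ:zeta(1,1,a)} established in the proof of Theorem~\ref{thm:PanzerDivES}.

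\textbf{Main obstacle.} The hard step will be the bookkeeping in Step 1 and Step 2: showing that the double sum of products collapses to the single term $-(b-1)\zeta_*(a;\eps_1)\zeta_*(b;\eps_3)$ modulo $\zeta(2)$. The difficulty is tracking the signs and the binomial ranges, including the boundary indices $s=1$ and $s=w-1$ where regularization enters. I would first test the bookkeeping numerically in small weights, $w=4$ and $w=6$, to fix the signs before writing the general argument.
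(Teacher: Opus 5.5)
Your outline follows the paper's route: specialize Theorem~\ref{thm:PanzerDivES} to $(n_1,n_2,n_3)=(a,1,b-1)$, reduce $\zeta(a,1;\eps_1,\eps_2)$ with Proposition~\ref{prop:DBESreduce}, then put $b=2$. The gap is in Step~1. It misreads the first double sum, so the ``main obstacle'' you isolate does not exist, and the cancellation that actually proves \eqref{equ:a1b-1ES} is never identified.

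That sum is restricted to $\mu\ge n_1=a$, $\nu\ge n_3=b-1$, $\mu+\nu=a+b$. So only $(\mu,\nu)=(a,b)$ and $(a+1,b-1)$ occur, and with $(-1)^{n_2}=-1$ it equals exactly
\[
-(b-1)\,\zeta_*(a;\eps_1)\zeta_*(b;\eps_3)-a\,\zeta(a+1;\eps_1)\zeta_*(b-1;\eps_3).
\]
No collapse is needed. The mechanism you propose could not produce one anyway: stuffle only trades each product $\zeta(\mu;\eps_1)\zeta(\nu;\eps_3)$ for double sums. It cannot make products of two odd single values disappear modulo $\zeta(2)$. So if the sum really had many odd--odd terms, your plan would stall.

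The real task is to remove the second term $-a\,\zeta(a+1;\eps_1)\zeta_*(b-1;\eps_3)$. This is done by combining it with $2\zeta_*(b-1;\eps_3)\zeta_*(a,1;\eps_1,\eps_2)$, not by stuffle.
\begin{itemize}
\item If $a$ is even, the weight $a+1$ is odd, so Proposition~\ref{prop:DBESreduce} applies (as you eventually note) and gives $2\zeta(a,1;\eps_1,\eps_2)\equiv a\zeta(a+1;\eps_1)+\zeta(a+1;\eps_2)-\zeta(a+1;\eps_1\eps_2)$.
\item If $a$ is odd, then $b-1\ge 2$ is even, and every term containing $\zeta_*(b-1;\eps_3)$ is $\equiv 0$. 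Your proposal does not treat this case.
\end{itemize}
Finally, the depth-two stuffle
\[
\zeta_*(b-1,a+1;\eps_3,\eps_1\eps_2)\equiv\zeta_*(b-1;\eps_3)\zeta(a+1;\eps_1\eps_2)-\zeta(a+1,b-1;\eps_1\eps_2,\eps_3)
\]
holds because the term $\zeta(w;\eps_1\eps_2\eps_3)$ vanishes for even $w$. It cancels the $-\zeta(a+1;\eps_1\eps_2)$ piece and leaves $\zeta_*(b-1;\eps_3)\zeta(a+1;\eps_2)-\zeta(a+1,b-1;\eps_1\eps_2,\eps_3)$, which gives \eqref{equ:a1b-1ES}. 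No triple sums ever arise.

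In Step~3 the terms are attributed to the wrong sources.
\begin{itemize}
\item $\zeta(a,2;\eps_1,\eps_2)+a\,\zeta(a+1,1;\eps_1,\eps_2)$ are the $s=a$ and $s=a+1$ terms of the second binomial sum, not products of stuffle.
\item The stuffle $\zeta_*(1;\eps_3)\zeta(a+1;\eps_2)\equiv\zeta_*(1,a+1;\eps_3,\eps_2)+\zeta(a+1,1;\eps_2,\eps_3)$ supplies $\zeta(a+1,1;\eps_2,\eps_3)$. Its other term cancels the $s=1$ summand, which is why the sum in \eqref{equ:a11ES} starts at $s=2$.
\item The product $(b-1)\zeta(a;\eps_1)\zeta(2;\eps_3)$ vanishes modulo $\zeta(2)$.
\end{itemize}
Your fallback via \eqref{equ:zeta(1,1,a)} does not work, since that identity concerns $\zeta_*(1,1,a)$, not $\zeta(a,1,1)$.
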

\begin{proof}
These are special cases of Theorem~\ref{thm:PanzerDivES}. First, direct computations yields that
\begin{align*}
 2 \zeta_*(a,1,b-1;&\, \eps_1,\eps_2,\eps_3)
\equiv 2\zeta_*(b-1;\eps_3)\zeta_*(a,1;\eps_1,\eps_2)-\zeta_*(a,b;\eps_1,\eps_2\eps_3) \notag \\
&\, +\zeta_*(b-1,a+1;\eps_3,\eps_1\eps_2)-(b-1) \zeta_*(a;\eps_1) \zeta_*(b;\eps_3)-a\zeta(a+1;\eps_1) \zeta_*(b-1;\eps_3) \\
&\, -(-1)^{a}\sum_{\mu+\nu=w} \binom{\nu-1}{b-2} \zeta_
*(\nu,\mu;\eps_3,\eps_2)
+(-1)^a\sum_{\mu+\nu=w} \binom{\nu-1}{a-1} \zeta_
*(\nu,\mu;\eps_1,\eps_2)
\end{align*}
which can be reduced to \eqref{equ:a1b-1ES} readily using
\begin{equation*}
2\zeta(a,1;\eps_1,\eps_2)=a\zeta(a+1; \eps_1)+\zeta(a+1; \eps_2)-\zeta(a+1; \eps_1\eps_2)
\end{equation*}
from Prop.~\ref{prop:DBESreduce}. If further $b=2$, then  \eqref{equ:a1b-1ES} implies that
\begin{align*}
 2 \zeta_*(a,1,1&\, \eps_1,\eps_2,\eps_3)
\equiv \zeta_*(1;\eps_3)\zeta(a+1; \eps_2)-\zeta(a+1,1; \eps_1\eps_2,\eps_3)-\zeta_*(a,2;\eps_1,\eps_2\eps_3) \notag\\
&\, -\zeta_*(a;\eps_1) \zeta_*(2;\eps_3)
  -\sum_{s=1}^{w-1}\zeta_*(s,w-s;\eps_3,\eps_2) ,
+\zeta(a,2;\eps_1,\eps_2)+a\zeta(a+1,1;\eps_1,\eps_2).
\end{align*}
Therefore, \eqref{equ:a11ES} follows easily from the stuffle relation
\begin{equation*}
    \zeta_*(1;\eps_3)\zeta(a+1; \eps_2)\equiv \zeta_*(1,a+1;\eps_3, \eps_2)+ \zeta_*(a+1,1;\eps_2,\eps_3) \pmod{\zeta(2)}
\end{equation*}
and the fact that  $\zeta_*(2;\eps_3)\equiv 0\pmod{\zeta(2)}.$
\end{proof}

We now apply the above to compute the symmetric triple zeta values.
\begin{thm}\label{thm:SMZV(a,1,b-1;e1,e2,e3)}
Suppose $\eps_1, \eps_2,\eps_3 =\pm 1$, $a,b\in\N$ and $w=a+b\ge 4$ is even. Then, modulo $\zeta(2)$
\begin{align*}
2\zeta_\ast^\Sy(a,1,b-1; \eps_1,\eps_2,\eps_3)
 \equiv&\,    + (\eps_1+1)\zeta(a;\eps_1)\zeta(b;\eps_2) + (\eps_1 \eps_2+1)\zeta(a+1; \eps_2)\zeta_*(b-1;\eps_3)  \\
&\,+ (\eps_1-1) \Big[(b-1)\zeta(a;\eps_1)\zeta(b; \eps_3)+\zeta(a;\eps_1)\zeta(b;\eps_2\eps_3)\Big] \\
&\, + (\eps_1 \eps_2-1) \Big[ a \zeta(a+1; \eps_1)\zeta_*(b-1;\eps_3)+\zeta(a+1; \eps_1\eps_2)\zeta_*(b-1;\eps_3)\Big]\\
&\,    -2(-1)^{a}\sum_{s=1}^{w-1}\left[ \binom{s-1}{b-2} \zeta_*(s,w-s;\eps_3,\eps_2) -\binom{s-1}{a-1}\zeta_*(s,w-s;\eps_1,\eps_2)\right] .
\end{align*}
\end{thm}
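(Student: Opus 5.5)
We will expand $\zeta_\ast^\Sy(a,1,b-1;\eps_1,\eps_2,\eps_3)$ from its definition in Conjecture~\ref{conj:KanekoZagierAltVersion} into its four terms ($i=0,1,2,3$) and reduce each modulo $\zeta(2)$. With sign factors $\sigma_1=(-1)^a\eps_1$, $\sigma_2=-\sigma_1\eps_2$ and $\sigma_3=\sigma_2(-1)^{b-1}\eps_3$, and using that $a+b$ is even so $(-1)^{b-1}=-(-1)^a$, we get
\begin{align*}
\zeta_\ast^\Sy(a,1,b-1;\eps_1,\eps_2,\eps_3)=&\,\zeta_*(a,1,b-1;\eps_1,\eps_2,\eps_3)+(-1)^a\eps_1\zeta_*(a;\eps_1)\zeta_*(1,b-1;\eps_2,\eps_3)\\
&\,-(-1)^a\eps_1\eps_2\zeta_*(1,a;\eps_2,\eps_1)\zeta_*(b-1;\eps_3)+\eps_1\eps_2\eps_3\zeta_*(b-1,1,a;\eps_3,\eps_2,\eps_1).
\end{align*}

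\textbf{First step.} Multiply by $2$. For the first triple sum we substitute \eqref{equ:a1b-1ES} of Cor.~\ref{cor:zeta(a,1,b-1);e1,e2,e3}. For the reversed triple sum $\zeta_*(b-1,1,a;\eps_3,\eps_2,\eps_1)$ we apply the same formula with the roles $(a,b)\mapsto(b-1,a+1)$ and $(\eps_1,\eps_3)$ swapped; the weight is unchanged. When $b=2$, the reversed sum $\zeta_*(1,1,a)$ should instead be handled by \eqref{equ:zeta(1,1,a)} from the proof of Theorem~\ref{thm:PanzerDivES}, or equivalently by the $(1,1,a)$ case of that theorem.

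\textbf{Second step.} Rewrite the two product terms. Each contains a double Euler sum paired with a depth-one factor, and these double sums have odd weight ($b$ and $a+1$ have the parity of $w-1$). Prop.~\ref{prop:DBESreduce} therefore reduces them to depth-one values, and after multiplying by the depth-one factor we obtain only products $\zeta(\cdot)\zeta(\cdot)$. Many of these vanish modulo $\zeta(2)$ because one factor has even argument. Where a factor has argument $1$, as in $\zeta_*(b-1;\eps_3)$ with $b=2$, we keep it as the regularized $\zeta_*$, which matches the statement.

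\textbf{Third step.} Collect everything. The double-sum binomial terms $\sum_s\binom{s-1}{\cdot}\zeta_*(s,w-s;\cdot)$ coming from the two triple reductions should combine, after reindexing $s\mapsto w-s$ and applying the stuffle/parity relations for even-weight double sums (products of two odd-argument zetas survive), into the single displayed sum with coefficient $-2(-1)^a$. The leftover terms $\zeta(a+1,b-1;\cdot)$, $\zeta_*(a,b;\cdot)$ and their reversed counterparts should pair up through the stuffle relation $\zeta_*(x)\zeta_*(y)=\zeta_*(x,y)+\zeta_*(y,x)+\zeta_*(x+y)$ into products of depth-one values. Grouping the result by $\eps_1$ and $\eps_1\eps_2$ should then produce the coefficients $(\eps_1\pm1)$ and $(\eps_1\eps_2\pm1)$. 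Throughout, we will use that for $\eps=\pm1$ the sign factors satisfy $\eps^2=1$ and that $\zeta(n;\eps)$ with $n$ even lies in $\zeta(2)\ES$.

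\textbf{Main obstacle.} The hardest part is the bookkeeping in the third step, in particular matching the reversed-triple binomial sums with the forward ones. Our plan there is to treat the cases $\eps_1=\pm1$ and $\eps_1\eps_2=\pm1$ separately and check that the sums cancel or double accordingly. The regularized boundary terms ($s=1$, and $b=2$) also need separate care, since there $\zeta_*(1;\eps)$ is the polynomial variable $T$ when $\eps=1$.
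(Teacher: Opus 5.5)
Your route is the paper's: expand $\zeta_\ast^\Sy$ into its four terms, insert \eqref{equ:a1b-1ES} for both $\zeta_*(a,1,b-1;\eps_1,\eps_2,\eps_3)$ and the reversed $\zeta_*(b-1,1,a;\eps_3,\eps_2,\eps_1)$, and turn the two product terms into depth-one products. You apply Prop.~\ref{prop:DBESreduce} directly to $\zeta_*(1,b-1;\eps_2,\eps_3)$ and $\zeta_*(1,a;\eps_2,\eps_1)$. The paper first uses stuffle and then the parity formula for $\zeta(a,1;\eps_1,\eps_2)$. Both give the same products. There is one slip in Step 2: $b$ and $a+1$ have \emph{opposite} parities, so only one of these double sums has odd weight. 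The other is multiplied by $\zeta(a;\eps_1)$ or $\zeta_*(b-1;\eps_3)$ with even argument $\ge 2$, so it vanishes modulo $\zeta(2)$.

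The real gap is in Step 3. It is controlled by the coefficient $\eps_1\eps_2\eps_3$ of the reversed triple, a sign your case split on $\eps_1$ and $\eps_1\eps_2$ never examines.
\begin{itemize}
\item Since $(-1)^{b-1}=-(-1)^a$, the binomial sum that \eqref{equ:a1b-1ES} produces for the reversed triple is term-by-term \emph{equal} to the one for the forward triple; no reindexing $s\mapsto w-s$ is involved. Together they give $(1+\eps_1\eps_2\eps_3)$ times $-(-1)^a\sum_s[\cdots]$.
\item The leftover even-weight double sums appear as $-\zeta(a+1,b-1;\eps_1\eps_2,\eps_3)-\eps_1\eps_2\eps_3\,\zeta_*(b-1,a+1;\eps_3,\eps_1\eps_2)$ and $-\zeta_*(a,b;\eps_1,\eps_2\eps_3)-\eps_1\eps_2\eps_3\,\zeta(b,a;\eps_2\eps_3,\eps_1)$. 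These collapse to depth-one products by stuffle only when $\eps_1\eps_2\eps_3=1$.
\end{itemize}
So your plan proves the displayed formula exactly when $\eps_1\eps_2\eps_3=1$. The paper's ``after simplification'' step makes the same tacit choice, treating the reversed triple's coefficient as $+1$. That is also the only case used later, namely $(\eps,1,\eps)$ and $(-1,-1,1)$.

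For $\eps_1\eps_2\eps_3=-1$, no bookkeeping can close Step 3, because the identity is false there. Take $(a,b)=(2,2)$ and $(\eps_1,\eps_2,\eps_3)=(1,-1,1)$. The coefficient of $T=\zeta_*(1)$ on the right-hand side is $-2\big(2\zeta(3)+\zeta(\ol{3})\big)-2\zeta(\ol{3})=-\zeta(3)$, while the left-hand side is a constant. You should add the hypothesis $\eps_1\eps_2\eps_3=1$. For product $-1$, the same computation gives a different identity, in which the binomial sums cancel and the differences of double sums remain.
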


\begin{proof}
By definition,
\begin{align*}
 &\, \zeta_\ast^\Sy(a,1,b-1;\eps_1,\eps_2,\eps_3)
\equiv \zeta_\ast(a,1,b-1;\eps_1,\eps_2,\eps_3)+(-1)^a \eps_1\zeta(a;\eps_1)\zeta_*(1,b-1;\eps_2,\eps_3)\\
&\,-(-1)^a \eps_1 \eps_2 \zeta_*(1,a;\eps_2,\eps_1)\zeta_*(b-1;\eps_3)
+\eps_1\eps_2\eps_3\zeta_\ast(b-1,1,a;\eps_3,\eps_2,\eps_1)\\
\equiv &\, \zeta_\ast(a,1,b-1;\eps_1,\eps_2,\eps_3)- \eps_1\zeta(a;\eps_1)\zeta_*(1,b-1;\eps_2,\eps_3)\\
&\,-\eps_1\eps_2\zeta_*(1,a;\eps_2,\eps_1)\zeta_*(b-1;\eps_3)+\eps_1\eps_2\eps_3\zeta_\ast(b-1,1,a;\eps_3,\eps_2,\eps_1)\\
\equiv &\, \zeta_\ast(a,1,b-1;\eps_1,\eps_2,\eps_3)-\eps_1\zeta(a;\eps_1)\zeta_*(1;\eps_2)\zeta_*(b-1;\eps_3)
+\eps_1\zeta(a;\eps_1)\zeta_*(b;\eps_2\eps_3)\\
&\,-\eps_1\zeta(a;\eps_1)\zeta_*(b-1,1;\eps_3,\eps_2)+\eps_1 \eps_2 \zeta(a,1;\eps_1,\eps_2)\zeta_*(b-1;\eps_3)- \eps_1 \eps_2 \zeta_*(1;\eps_2)\zeta(a;\eps_1)\zeta_*(b-1;\eps_3)\\
&\, +\eps_1 \eps_2 \zeta_*(a+1;\eps_1\eps_2)\zeta_*(b-1;\eps_3)
+\eps_1\eps_2\eps_3\zeta_\ast(b-1,1,a;\eps_3,\eps_2,\eps_1)\\
\equiv &\, \zeta_\ast(a,1,b-1;\eps_1,\eps_2,\eps_3)+\eps_1\zeta(a;\eps_1)\zeta_*(b-1,1;\eps_3,\eps_2)
+\eps_1\zeta(a;\eps_1)\zeta_*(b;\eps_2\eps_3)\\
&\,+\eps_1 \eps_2 \zeta(a,1;\eps_1,\eps_2)\zeta_*(b-1;\eps_3)
+\eps_1\eps_2\eps_3\zeta_\ast(b-1,1,a;\eps_3,\eps_2,\eps_1)
+\eps_1 \eps_2 \zeta_*(a+1;\eps_1\eps_2)\zeta_*(b-1;\eps_3).
\end{align*}
Now, plugging the expressions for the two triple Euler sums provided by Cor.~\ref{cor:zeta(a,1,b-1);e1,e2,e3} and using
\begin{equation*}
\zeta(a,1;\eps_1,\eps_2)=\frac12\Big[a\zeta(a+1; \eps_1)+\zeta(a+1; \eps_2)-\zeta(a+1; \eps_1\eps_2)\Big],
\end{equation*}
we get after simplification
 \begin{align*}
 &\, 2\zeta_\ast^\Sy(a,1,b-1;\eps_1,\eps_2,\eps_3)
\equiv
+ \eps_1 \zeta(a;\eps_1)\Big[(b-1)\zeta(b; \eps_3)+\zeta(b;\eps_2)+\zeta(b;\eps_2\eps_3)\Big]\\
&\,+ \eps_1 \eps_2  \zeta_*(b-1;\eps_3)\Big[a\zeta(a+1; \eps_1)+\zeta(a+1; \eps_2)+\zeta(a+1; \eps_1\eps_2)\Big]\\
&\, +\zeta_*(a;\eps_1)\zeta(b; \eps_2)-\zeta(a;\eps_1)\zeta(b;\eps_2\eps_3)-\zeta(a+1;\eps_2\eps_1)\zeta(b-1;\eps_3)\\
&\, -a\zeta(b-1;\eps_3) \zeta(a+1;\eps_1)+\zeta_*(b-1;\eps_3)\zeta(a+1; \eps_2)\\
&\, -(b-1) \zeta(a;\eps_1) \zeta(b;\eps_3)
  -2(-1)^{a}\sum_{s=1}^{w-1}\left[ \binom{s-1}{b-2} \zeta_*(s,w-s;\eps_3,\eps_2) -\binom{s-1}{a-1}\zeta_*(s,w-s;\eps_1,\eps_2)\right].
\end{align*}
This easily leads to the final expression in the theorem.
\end{proof}

The following corollary will be crucial for our computation of effective double Euler sums in the following sections.
\begin{cor}\label{cor:SMZV(a,1,b-1;eps,1,eps)}
Let $\eps=-1$ and $a,b\in\N$ with even $w=a+b\ge 4$. Then, modulo $\zeta(2)$
\begin{align*}
&\, 2\zeta_\ast^\Sy(a,1,b-1; \eps,1,\eps)
\equiv-2\zeta_*(a;\eps)\zeta(b;\eps)
+(\eps+1)\Big[\zeta(a;\eps)\zeta(b)
+\zeta(a+1)\zeta_*(b-1;\eps) \Big]\\
&\,+(\eps-1)\Big[
b\zeta(a;\eps)\zeta(b; \eps) +(a+1)\zeta(a+1; \eps)\zeta_*(b-1;\eps) \Big]
 -2(-1)^{a}\sum_{s=1}^{w-1}\binom{s}{b-1}\zeta_*(s,w-s;\eps,1) .
\end{align*}
\end{cor}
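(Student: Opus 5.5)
This corollary is the specialization $(\eps_1,\eps_2,\eps_3)=(\eps,1,\eps)$ with $\eps=-1$ of Theorem~\ref{thm:SMZV(a,1,b-1;e1,e2,e3)}, so I would substitute these signs there and simplify. With $\eps_2=1$ we have $\eps_1\eps_2=\eps$ and $\eps_2\eps_3=\eps$. The coefficient $\eps_1+1$ becomes $\eps+1$, multiplying $\zeta(a;\eps)\zeta(b)$. The coefficient $\eps_1\eps_2+1=\eps+1$ multiplies $\zeta(a+1)\zeta_*(b-1;\eps)$. These are exactly the two $(\eps+1)$-terms in the statement.

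The $(\eps_1-1)$ bracket becomes
\[
(\eps-1)\big[(b-1)\zeta(a;\eps)\zeta(b;\eps)+\zeta(a;\eps)\zeta(b;\eps)\big]=(\eps-1)\,b\,\zeta(a;\eps)\zeta(b;\eps).
\]
The $(\eps_1\eps_2-1)$ bracket becomes
\[
(\eps-1)\big[a\zeta(a+1;\eps)+\zeta(a+1;\eps)\big]\zeta_*(b-1;\eps)=(\eps-1)(a+1)\zeta(a+1;\eps)\zeta_*(b-1;\eps).
\]
These give the $(\eps-1)$ part of the claim.

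Next I would treat the sum. With $\eps_3=\eps_1=\eps$, both double Euler sums in the bracket are $\zeta_*(s,w-s;\eps,1)$. So the bracket collapses to
\[
\Big[\binom{s-1}{b-2}-\binom{s-1}{a-1}\Big]\zeta_*(s,w-s;\eps,1).
\]
Since $a-1=w-b-1$, we have $\binom{s-1}{a-1}=\binom{s-1}{w-s-b}$. The plan is to rewrite this difference, combined with the sign $-2(-1)^a$, as $\binom{s}{b-1}$ via Pascal's rule $\binom{s}{b-1}=\binom{s-1}{b-2}+\binom{s-1}{b-1}$. To do so I would trade $-\binom{s-1}{a-1}$ for $+\binom{s-1}{b-1}$ by means of a parity relation. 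Reflecting $s\mapsto w-s$ and using the stuffle relation
\[
\zeta_*(s,w-s;\eps,1)+\zeta_*(w-s,s;1,\eps)=\zeta_*(s;\eps)\zeta_*(w-s)-\zeta(\ol{w}),
\]
together with the odd-weight reduction of Prop.~\ref{prop:DBESreduce}, the leftover terms reduce to products of depth-one values modulo $\zeta(2)$.

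That leftover is the extra term $-2\zeta_*(a;\eps)\zeta(b;\eps)$ appearing in the claim, which has no counterpart in the theorem. The depth-one products that remain after the binomial manipulation (the weight-$w$ terms $\zeta(\mu;\eps)\zeta(\nu)$ with $\mu+\nu=w$) should mostly vanish modulo $\zeta(2)$ when a factor has even argument. The surviving terms should assemble to $-2\zeta_*(a;\eps)\zeta(b;\eps)$.

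The main obstacle is exactly this binomial/reflection step: identifying $\sum_s\big[\binom{s-1}{b-2}-\binom{s-1}{a-1}\big]\zeta_*(s,w-s;\eps,1)$ with $\sum_s\binom{s}{b-1}\zeta_*(s,w-s;\eps,1)$ up to $(-1)^a\zeta_*(a;\eps)\zeta(b;\eps)$ and $\zeta(2)$-multiples. I would first test it numerically for $w=4$, $(a,b)=(2,2),(1,3),(3,1)$. If it fails, I would instead check whether the sign combination in Theorem~\ref{thm:SMZV(a,1,b-1;e1,e2,e3)} (its leading $\eps_1$-terms) already produces $\binom{s}{b-1}$ directly after re-deriving that theorem's last simplification with these specific signs. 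The boundary terms $s=1$ and $s=w-1$, where the values are regularized, would need separate care.
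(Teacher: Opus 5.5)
Your specialization of the non-sum terms is correct and matches the paper. With $\eps_2=1$ the two $(\eps-1)$-brackets collapse to $b\,\zeta(a;\eps)\zeta(b;\eps)$ and $(a+1)\zeta(a+1;\eps)\zeta_*(b-1;\eps)$. The sum becomes $\sum_s\bigl[\binom{s-1}{b-2}-\binom{s-1}{a-1}\bigr]\zeta_*(s,w-s;\eps,1)$.

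\textbf{The gap.} It is the step you flag as the main obstacle, and the tool you propose for it cannot work.
\begin{itemize}
\item Prop.~\ref{prop:DBESreduce} is an \emph{odd}-weight statement. Every $\zeta(\ol{s},w-s)$ here has even weight $w$, and such double Euler sums in general do not reduce to depth-one products modulo $\zeta(2)$. That is why the paper passes through triple sums at all.
\item The stuffle relation you wrote trades $\zeta(\ol{s},w-s)$ for $\zeta(w-s,\ol{s})$, a sum of type $(a,\ol{b})$. Reflecting and stuffling therefore moves you out of the family $\zeta(\ol{s},w-s)$ that appears in the target sum, so the manipulation never closes up.
\item Numerical testing would not supply the missing identity either.
\end{itemize}

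\textbf{The missing idea: an exact shuffle identity.} Put $\tc=dt/(1+t)$. Then $\zeta(\ol{a})=-\int_0^1\ta^{a-1}\tc$ and $\zeta(\ol{s},w-s)=\int_0^1\ta^{s-1}\tc\,\ta^{w-s-1}\tc$. Expanding $\ta^{a-1}\tc\sha\ta^{b-1}\tc$ yields
\[
\zeta(\ol{a})\zeta(\ol{b})=\sum_{s=1}^{w-1}\left[\binom{s-1}{a-1}+\binom{s-1}{b-1}\right]\zeta(\ol{s},w-s).
\]
The argument then runs as follows.
\begin{enumerate}
\item Use this identity to replace $-\sum_s\binom{s-1}{a-1}\zeta(\ol{s},w-s)$ by $\sum_s\binom{s-1}{b-1}\zeta(\ol{s},w-s)-\zeta(\ol{a})\zeta(\ol{b})$.
\item Pascal's rule then turns the sum term of Theorem~\ref{thm:SMZV(a,1,b-1;e1,e2,e3)} into $-2(-1)^a\sum_s\binom{s}{b-1}\zeta(\ol{s},w-s)+2(-1)^a\zeta(\ol{a})\zeta(\ol{b})$.
\item Since $w$ is even, $a\equiv b\pmod{2}$. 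If both are even, $\zeta(\ol{a})\zeta(\ol{b})$ is a rational multiple of $\pi^w$ and lies in $\zeta(2)\ES_{w-2}$. If both are odd, $(-1)^a=-1$.
\item Either way $2(-1)^a\zeta(\ol{a})\zeta(\ol{b})\equiv-2\zeta(\ol{a})\zeta(\ol{b})$ modulo $\zeta(2)$, which is exactly the extra term you were trying to produce.
\end{enumerate}
This is the paper's argument. Also, for $\eps=-1$ every $\zeta(\ol{s},w-s)$ with $1\le s\le w-1$ converges, so the boundary terms need no separate care.
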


\begin{proof}
Taking $\eps_1=\eps_3=\eps$ and $\eps_2=1$ in Theorem~\ref{thm:SMZV(a,1,b-1;e1,e2,e3)}, we obtain
\begin{align*}
&\, 2\zeta_\ast^\Sy(a,1,b-1;\eps,1,\eps)
\equiv(\eps-1) (b-1)\zeta(a;\eps)\zeta(b; \eps)
+(\eps+1)\zeta(a;\eps)\zeta(b)
+(\eps-1)\zeta(a;\eps)\zeta(b;\eps)\\
&\,+(\eps-1)a\zeta(a+1; \eps)\zeta_*(b-1;\eps)
+(\eps+1)\zeta(a+1)\zeta_*(b-1;\eps)
+(\eps-1)\zeta(a+1;\eps)\zeta(b-1;\eps) \\
&\, -2(-1)^{a}\sum_{s=1}^{w-1}\left[ \binom{s-1}{b-2} \zeta_*(s,w-s;\eps,1) -\binom{s-1}{a-1}\zeta_*(s,w-s;\eps,1)\right].
\end{align*}
The corollary follows immediately from  the shuffle product formula
\begin{align*}
\zeta_*(a;\eps)\zeta_*(b;\eps)=\zeta_\sha(a;\eps)\zeta_\sha(b;\eps)= &\, \sum_{s=1}^{w-1} \bigg[\binom{s-1}{a-1}+\binom{s-1}{b-1}\bigg] \zeta_*(s,w-s;\eps,1)
\end{align*}
and the binomial identity $\binom{s-1}{b-2}+\binom{s-1}{b-1}=\binom{s}{b-1}$.
\end{proof}

\section{Explicit Effective Double Euler Sums of Even Weight}\label{sec:depthTwoEvenWt}
By using their own heuristic definition of effective double zeta values $Z_\calA(s,w-s)$ in Definitionf~\ref{defn:KZdefn}, Kaneko and Zagier made the following conjecture.

\begin{conj}\label{conj:KZeffDBZs} \emph{(\cite[(65)]{KanekoZa2026})}
Let $a,b\in\N$ such that $w=a+b\ge 4$ is even. Set
\begin{equation*}
\Psi_s:=\zeta_\calA(s,1,w-s-1)-Z_\calA(s)Z_\calA(w-s) \quad (1\le s\le w-1).
\end{equation*}
Then
\begin{align} \label{equ:KZrel1}
&\, (-1)^a \sum_{s=a}^{w-1} \binom{s}{a} Z_\calA(s,w-s) =\Psi_a
     \quad (1\le a\le w-2),\\
&\, Z_\calA(w-1,1)+\sum_{s=1}^{w-1} Z_\calA(s,w-s) =0, \label{equ:KZrel2} \\
&\, Z_\calA(a,b)\equiv(-1)^a \sum_{s=1}^{w-2} \left[ \binom{s}{a}-\frac12\binom{w-1}{a}\right] \Psi_s   \quad (1\le a\le w-1). \label{equ:KZrel3}
\end{align}
\end{conj}

\subsection{A lemma on linear algebra}
Our main idea to tackle Conjecture~\ref{conj:KZeffDBZs} (and more generally on double Euler sums) is motivated by Yasuda's theorem \cite{Yasuda2016} which says that the symmetric MZVs generate the whole space of MZVs. Yasuda's result is extended to higher level analogs, including the Euler sums, by Anzawa \cite{Anzawa2024}.  We will find a system  of linear equations relating double Euler sums and a special type of symmetric triple Euler sums of the form $\zeta_\sha^\Sy(a,1,b-1;\eps_1,\eps_2,\eps_3)$ of even weight. Then we can find a solution for all the double Euler sums  of even weight in terms of such triple values and (products of) single zeta values. In this process, we will need the following linear algebra result.

\begin{lem}\label{lem:linAlg}
Let $w\ge 4$ be an even number. Let $x_s$ ($1\le s\le w-1$) be formal variables satisfying the following system of equations:
\begin{align}\label{equ:FormalRel1}
\sum_{s=a}^{w-1}  \binom{s}{a} x_s=& \, (-1)^a \psi_a \quad (1\le a\le w-2), \\
 -\sum_{s=1}^{w-2} x_s=& \,A x_{w-1}+B, \label{equ:FormalRel2}
\end{align}
where  $A$, $B$ and $\psi_a$ are constants with $A\ne 0$. Then,
\begin{align}\label{equ:FormalRel3}
x_a=& \, (-1)^a \sum_{s=1}^{w-2} \left[ \binom{s}{a}-\frac1{A}\binom{w-1}{a}\right]  \psi_s+(-1)^a \frac{B}{A} \binom{w-1}{a}  \quad (1\le a\le w-2), \\
x_{w-1}=& \, \frac{-1}{A}\left(B+\sum_{s=1}^{w-2} x_s\right). \label{equ:FormalRel4}
\end{align}
Conversely, if \eqref{equ:FormalRel3} and \eqref{equ:FormalRel4} hold then the variables $x_s$ ($1\le s\le w-1$) satisfy both \eqref{equ:FormalRel1} and \eqref{equ:FormalRel2}.
\end{lem}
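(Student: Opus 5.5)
The plan is to treat the system as a triangular binomial system in the unknowns $x_1,\dots,x_{w-2}$, with $x_{w-1}$ as a parameter. We solve it by binomial inversion, and then use \eqref{equ:FormalRel2} to pin down $x_{w-1}$.

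\textbf{Step 1: isolate $x_{w-1}$.} Move the $s=w-1$ term of \eqref{equ:FormalRel1} to the right-hand side:
\begin{equation*}
\sum_{s=a}^{w-2}\binom{s}{a}x_s=(-1)^a\psi_a-\binom{w-1}{a}x_{w-1}=:y_a \quad (1\le a\le w-2).
\end{equation*}
The matrix $\big(\binom{s}{a}\big)_{1\le a,s\le w-2}$ is unipotent upper triangular, and its inverse is $\big((-1)^{s-a}\binom{s}{a}\big)$. This gives
\begin{equation*}
x_a=\sum_{s=a}^{w-2}(-1)^{s-a}\binom{s}{a}y_s .
\end{equation*}

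\textbf{Step 2: evaluate the coefficient of $x_{w-1}$.} This coefficient is $-(-1)^a\sum_{s=a}^{w-2}(-1)^s\binom{w-1}{s}\binom{s}{a}$. Use $\binom{w-1}{s}\binom{s}{a}=\binom{w-1}{a}\binom{w-1-a}{s-a}$. The full sum over $s=a,\dots,w-1$ then vanishes, because $w-1-a\ge1$. Hence the truncated sum equals $-(-1)^{w-1}\binom{w-1}{a}=\binom{w-1}{a}$, using that $w$ is even. Therefore
\begin{equation*}
x_a=(-1)^a\sum_{s=1}^{w-2}\binom{s}{a}\psi_s-(-1)^a\binom{w-1}{a}x_{w-1}\quad(1\le a\le w-2).
\end{equation*}
Here we extended the $s$-sum down to $s=1$, which is harmless since $\binom{s}{a}=0$ for $s<a$.

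\textbf{Step 3: sum over $a$ and determine $x_{w-1}$.} For $s\ge1$ we have $\sum_{a=1}^{s}(-1)^a\binom{s}{a}=-1$. Also $\sum_{a=1}^{w-2}(-1)^a\binom{w-1}{a}=0-1-(-1)^{w-1}=0$, again because $w$ is even. Summing the formula from Step 2 over $a$ therefore gives $\sum_{s=1}^{w-2}x_s=-\sum_{s=1}^{w-2}\psi_s$, independently of $x_{w-1}$. Substituting this into \eqref{equ:FormalRel2} yields $Ax_{w-1}=\sum_s\psi_s-B$. Since $A\ne0$, we get $x_{w-1}=(\sum_s\psi_s-B)/A$, which is just \eqref{equ:FormalRel4}. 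Plugging this value back into the formula of Step 2 gives exactly \eqref{equ:FormalRel3}.

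\textbf{Converse.} Assume \eqref{equ:FormalRel3} and \eqref{equ:FormalRel4}. Summing \eqref{equ:FormalRel3} over $a$ with the same two identities as in Step 3 gives $\sum_{a=1}^{w-2} x_a=-\sum_s\psi_s$. Combined with \eqref{equ:FormalRel4}, this yields $x_{w-1}=(\sum_s\psi_s-B)/A$. Consequently \eqref{equ:FormalRel3} coincides with the inverted formula $x_a=\sum_{s}(-1)^{s-a}\binom{s}{a}y_s$. Applying the matrix $\big(\binom{s}{a}\big)$ then recovers \eqref{equ:FormalRel1}, and \eqref{equ:FormalRel2} is simply \eqref{equ:FormalRel4} rearranged.

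The only delicate point is the evaluation of the two alternating binomial sums, where the parity of $w$ is essential. Everything else is routine triangular inversion.
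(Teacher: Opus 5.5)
Your proof is correct and rests on the same ingredients as the paper's: the binomial inversion identity $\sum_{s}(-1)^s\binom{s}{a}\binom{t}{s}=(-1)^a\delta_{a,t}$, together with the alternating sums $\sum_{a=1}^{s}(-1)^a\binom{s}{a}=-1$ and $\sum_{a=1}^{w-2}(-1)^a\binom{w-1}{a}=0$ (the latter using that $w$ is even). The only differences are that the paper first derives $\sum_s\psi_s=-\sum_t x_t=Ax_{w-1}+B$ directly from the system and then verifies \eqref{equ:FormalRel3} by substitution, whereas you invert the triangular system first, and that you also carry out the converse, which the paper leaves to the reader.
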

\begin{proof}
From \eqref{equ:FormalRel1} we get
\begin{align*}
     \sum_{s=1}^{w-2} \psi_s =&\,  \sum_{s=1}^{w-2} (-1)^s \sum_{t=s}^{w-1}  \binom{t}{s} x_t \\
     =&\, \sum_{s=1}^{w-2}(-1)^s  \binom{w-1}{s} x_{w-1} +  \sum_{t=1}^{w-2}   \left[ \sum_{s=1}^{t}  (-1)^s\binom{s}{a}\binom{t}{s} \right]  x_t
    =  - \sum_{t=1}^{w-2} x_t =A x_{w-1}+B
\end{align*}
by \eqref{equ:FormalRel2}. Hence,
\begin{align*}
&\,\text{RHS of } \eqref{equ:FormalRel3}+  (-1)^a \binom{w-1}{a}   x_{w-1}\\
\equiv &\,
\text{RHS of }  \eqref{equ:FormalRel3}+\frac{(-1)^a}{A} \binom{w-1}{a} \bigg(\sum_{s=1}^{w-2} \psi_s -B\bigg)\\
\equiv &\,(-1)^a \sum_{s=a}^{w-2} \binom{s}{a} \psi_s  \\
\equiv &\,(-1)^a \sum_{s=a}^{w-2}   \binom{s}{a}  (-1)^s \sum_{t=s}^{w-1}  \binom{t}{s} x_t  \\
\equiv &\,(-1)^a \sum_{s=a}^{w-2}  (-1)^s   \binom{s}{a}   \binom{w-1}{s} x_{w-1}
+(-1)^a  \sum_{t=a}^{w-2}  \sum_{s=a}^{t}  (-1)^s \binom{s}{a} \binom{t}{s} x_t.
\end{align*}
Note that
\begin{equation*}
\sum_{s=a}^{t}  (-1)^s\binom{s}{a}\binom{t}{s}
= \frac{t!}{(t-a)!} \sum_{s=0}^{t-a} \frac{(-1)^{s+a} (t-a)!}{(t-a-s)! a!}
= \delta_{a,t} (-1)^a
\end{equation*}
where $\delta_{a,t}$ is the Kronecker symbol. In particular,
\begin{equation*}
\sum_{s=1}^{w-2}  (-1)^s\binom{s}{a}\binom{w-1}{s}
=\binom{w-1}{a}+(-1)^a\delta_{a,w-1}  .
\end{equation*}
Hence,
\begin{align*}
 &\,  \text{RHS of } \eqref{equ:FormalRel3} +(-1)^a \binom{w-1}{a} x_{w-1} \\
= &\,  (-1)^a \binom{w-1}{a} x_{w-1}+ \delta_{a,w-1}  \cdot x_{w-1}+  (1-\delta_{a,w-1}) \cdot x_a  \\
= &\,   (-1)^a \binom{w-1}{a} x_{w-1}+ x_a .
\end{align*}
This yields \eqref{equ:FormalRel3} immediately. The converse statement is a similar exercise on manipulating binomial sums and is thus left to the interested reader.
\end{proof}

\subsection{Effective double zeta values $Z_\calA^\eff(a,b)$}
We now show that Conjecture~\ref{conj:KZeffDBZs} holds if we replace $Z_\calA$ by the effective double zeta values $Z_\calA^\eff$.
We first treat \eqref{equ:KZrel1} and \eqref{equ:KZrel2} in Theorem~\ref{thm:KZeffDBZs} and then derive \eqref{equ:KZrel3}
in Corollary~\ref{cor:KZeffDBZs}.

\begin{thm}\label{thm:KZeffDBZs}
Assume $b\ge 2$ and $w=a+b\ge 4$ is even. Then, modulo $\zeta(2)$ we have
\begin{align}\label{equ:KZrel1SMZversion}
&\zeta_\ast^\Sy(a,1,b-1)-\zeta_*(a)\zeta(b)
\equiv  (-1)^a \sum_{s=a}^{w-1}  \binom{s}{a}  \zeta_*(s,w-s), \\
&\zeta(w-1,1)+\sum_{s=1}^{w-1} \zeta_*(s,w-s)\equiv 0. \label{equ:KZrel2SMZversion}
\end{align}
\end{thm}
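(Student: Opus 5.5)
Both congruences will be derived from the parity results of Section~\ref{sec:Parity}, specialized to the non-alternating case $\eps_1=\eps_2=\eps_3=1$.

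\textbf{First congruence \eqref{equ:KZrel1SMZversion}.} Put $\eps_1=\eps_2=\eps_3=1$ in Theorem~\ref{thm:SMZV(a,1,b-1;e1,e2,e3)}. The terms carrying $(\eps_1-1)$ and $(\eps_1\eps_2-1)$ drop out, and we get
\begin{align*}
2\zeta_\ast^\Sy(a,1,b-1)\equiv&\, 2\zeta_*(a)\zeta(b)+2\zeta(a+1)\zeta_*(b-1)\\
&\, -2(-1)^a\sum_{s=1}^{w-1}\left[\binom{s-1}{b-2}-\binom{s-1}{a-1}\right]\zeta_*(s,w-s).
\end{align*}
Since $w$ is even, $a+1$ and $b-1$ have the same parity, so one of them is even. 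Hence $\zeta(a+1)\zeta_*(b-1)\equiv0$, except when $b=2$, where $\zeta_*(1)=T$ is to be read at $T=0$. The remaining task is the double sum, which should be converted to the form $\sum_{s\ge a}\binom{s}{a}\zeta_*(s,w-s)$. For this we use the shuffle product
\[
\zeta_*(a)\zeta_*(b)=\sum_{s=1}^{w-1}\left[\binom{s-1}{a-1}+\binom{s-1}{b-1}\right]\zeta_*(s,w-s),
\]
which is the case $\eps=1$ of the formula in the proof of Corollary~\ref{cor:SMZV(a,1,b-1;eps,1,eps)}, together with $\binom{s-1}{b-2}+\binom{s-1}{b-1}=\binom{s}{b-1}$. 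These should reduce the sum to
\[
\zeta_*(a)\zeta(b)-(-1)^a\sum_{s}\binom{s}{b-1}\zeta_*(s,w-s)
\]
(up to sign), leaving a sum weighted by $\binom{s}{b-1}$ where we want $\binom{s}{a}$.

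\textbf{Main obstacle.} Passing from $\binom{s}{b-1}$ to $\binom{s}{a}$ needs an extra symmetry. I would first try the duality/parity of double zetas of even weight modulo $\zeta(2)$: use the stuffle relation $\zeta_*(s,w-s)+\zeta_*(w-s,s)\equiv\zeta(s)\zeta(w-s)-\zeta(w)$, which is $\equiv -\zeta(w)\equiv0$ or a product of two odd zetas, then reindex $s\mapsto w-s$. Writing $\binom{s}{b-1}$ via Vandermonde as $\sum_j\binom{w-s}{\cdot}$-type sums should turn the $\binom{s}{b-1}$ sum into the $\binom{s}{a}$ sum. Products of odd single zetas may appear; they must cancel against the $\zeta_*(a)\zeta(b)$ correction, which is why that product sits on the left of \eqref{equ:KZrel1SMZversion}. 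If this reindexing becomes messy, the fallback is to compare coefficients of a basis modulo $\zeta(2)$, namely $\{\zeta(w-1,1)\}\cup\{\zeta(\text{odd})\zeta(\text{odd})\}$, using the known even-weight double zeta reductions.

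\textbf{Second congruence \eqref{equ:KZrel2SMZversion}.} Use the Euler sum formula $\sum_{s=2}^{w-1}\zeta(s,w-s)=\zeta(w)\equiv0$. The $s=1$ term is $\zeta_*(1,w-1)=\zeta_*(1)\zeta(w-1)-\zeta(w-1,1)-\zeta(w)$, which at $T=0$ gives $-\zeta(w-1,1)-\zeta(w)$. Adding $\zeta(w-1,1)$ then yields $\equiv0$ modulo $\zeta(2)$, since $\zeta(w)\in\zeta(2)\MZV$ for even $w$.
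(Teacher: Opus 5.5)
Your proof of \eqref{equ:KZrel2SMZversion} is fine. You use the sum formula plus the stuffle relation for $\zeta_*(1,w-1)$, while the paper uses the shuffle product giving $\zeta_\sha(1)\zeta(w-1)=0$. For \eqref{equ:KZrel1SMZversion}, however, there are two genuine problems.

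First, the parity step is wrong. That $a+1$ and $b-1$ have the same parity does not make one of them even. When $a,b$ are both even, $\zeta(a+1)\zeta_*(b-1)$ is a product of two odd zeta values (e.g.\ $\zeta(3)^2$ for $(a,b)=(2,4)$), and nothing in the parity argument kills it modulo $\zeta(2)$. If you keep it, your Pascal-rule reduction correctly gives
\[
\zeta_\ast^\Sy(a,1,b-1)\equiv \zeta(a+1)\zeta_*(b-1)-(-1)^a\sum_{s=1}^{w-1}\binom{s}{b-1}\zeta_*(s,w-s),
\]
since the leftover $(1+(-1)^a)\zeta_*(a)\zeta(b)$ is $\equiv 0$ in both parities.

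Second, the step you yourself call the main obstacle is never carried out, and the routes you suggest cannot carry it out. The stuffle reflection $\zeta_*(s,w-s)+\zeta_*(w-s,s)\equiv\zeta(s)\zeta(w-s)$ only trades the weight $\binom{s}{b-1}$ for $\binom{w-s}{b-1}$ plus products. Vandermonde is an identity among binomial coefficients, not a relation among double zetas. The fallback ``basis'' $\{\zeta(w-1,1)\}\cup\{\zeta(\mathrm{odd})\zeta(\mathrm{odd})\}$ spans only products, since $\zeta(w-1,1)$ is itself a polynomial in single zetas by Euler. Even-weight double zetas such as $\zeta(5,3)$ are neither known nor expected to reduce to such a set, so there are no ``known reductions'' to compare coefficients with.

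The missing idea is the reversal symmetry of the symmetric sum: since $w$ is even, $\zeta_\ast^\Sy(a,1,b-1)=\zeta_\ast^\Sy(b-1,1,a)$. Apply the displayed formula with $(a,b)$ replaced by $(b-1,a+1)$, which is allowed precisely because $b\ge 2$. The binomial weight becomes $\binom{s}{a}$ and the product term becomes $\zeta(b)\zeta_*(a)$. Then $(-1)^{b-1}=-(-1)^a$ turns the result into \eqref{equ:KZrel1SMZversion} verbatim. This is exactly the paper's proof, via the case $\eps=1$ of Corollary~\ref{cor:SMZV(a,1,b-1;eps,1,eps)}.

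Alternatively, you can close the gap with shuffle rather than stuffle relations. By Pascal, $\binom{s}{a}+\binom{s}{b-1}=\bigl[\binom{s-1}{a-1}+\binom{s-1}{b-1}\bigr]+\bigl[\binom{s-1}{a}+\binom{s-1}{b-2}\bigr]$, so two shuffle products give
\[
\sum_{s=1}^{w-1}\left[\binom{s}{a}+\binom{s}{b-1}\right]\zeta_*(s,w-s)=\zeta_*(a)\zeta(b)+\zeta(a+1)\zeta_*(b-1).
\]
Since $\zeta_*(a)\zeta(b)\equiv 0$ when $a$ is even and $\zeta(a+1)\zeta_*(b-1)\equiv 0$ when $a$ is odd, this is precisely what converts the displayed formula into \eqref{equ:KZrel1SMZversion}. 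Note that the product term you discarded is exactly what this conversion needs.
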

\begin{proof}
First,  the shuffle product
\begin{equation*}
   \tb\sha \ta^{w-2}  \tb=\ta^{w-2} \tb^2 +\sum_{s=1}^{w-1} \ta^{s-1} \tb \ta^{w-s-1} \tb
\end{equation*}
we see that
\begin{equation*}
    0=\zeta_*(1)\zeta(w-1)=\zeta(w-1,1)+\sum_{s=1}^{w-1} \zeta_*(s,w-s)
\end{equation*}
which proves \eqref{equ:KZrel2SMZversion}.

Next, taking $\eps=1$ in Cor.~\ref{cor:SMZV(a,1,b-1;eps,1,eps)} and applying the substitution $(a,b)\to (b-1,a+1)$ we get
\begin{equation}\label{equ:checkb=2}
\zeta_\ast^\Sy(a,1,b-1)-\zeta_*(a)\zeta(b)
\equiv   (-1)^a \sum_{s=a}^{w-1} \binom{s}{a}  \zeta_*(s,w-s)
\end{equation}
since $\zeta_\ast^\Sy(b-1,1,a)=\zeta_\ast^\Sy(a,1,b-1)$ and $a$ and $b$ have the same parity.  This gives \eqref{equ:KZrel1SMZversion} and thus completes the proof of the theorem.
\end{proof}

\begin{cor}\label{cor:KZeffDBZs}
Suppose $a$ and $b$ are positive integers with even $w=a+b\ge 4$. If $1\le a\le w-2$ then we have
\begin{align} \label{equ:KZrel3SMZversion}
&\, \zeta_*(a,b)\equiv (-1)^a \sum_{s=1}^{w-2} \left[ \binom{s}{a}-\frac12\binom{w-1}{a}\right] \Big(\zeta_\ast^\Sy(s,1,w-s-1)-\zeta_*(s)\zeta(w-s)\Big),\\
&\, Z_\calA^\eff(a,b)=(-1)^a \sum_{s=1}^{w-2} \left[ \binom{s}{a}-\frac12\binom{w-1}{a}\right] \Big(\zeta_\calA(s,1,w-s-1)-Z_\calA^\eff(s)Z_\calA^\eff(w-s)\Big). \label{equ:KZrel3FMZversion}
\end{align}
Further,
\begin{align} \label{equ:KZrel3FMZversion2}
Z_\calA^\eff(w-1,1)=-\frac12 \sum_{a=1}^{w-2}Z_\calA^\eff(a,b).
\end{align}
\end{cor}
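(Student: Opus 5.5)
The plan is to apply Lemma~\ref{lem:linAlg} directly to the two relations of Theorem~\ref{thm:KZeffDBZs}, then transport the resulting identity to the finite side via $f_\ES$.

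\textbf{Setting up the lemma.} Put $x_s:=\zeta_*(s,w-s)$ for $1\le s\le w-1$, and
\begin{equation*}
\psi_a:=\zeta_\ast^\Sy(a,1,w-a-1)-\zeta_*(a)\zeta(w-a)\quad (1\le a\le w-2).
\end{equation*}
All identities are taken in $\ES_w/\zeta(2)\ES_{w-2}$. The lemma uses only linear manipulations with rational coefficients, so it holds verbatim in this quotient.

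Relation \eqref{equ:KZrel1SMZversion}, multiplied by $(-1)^a$, gives exactly \eqref{equ:FormalRel1}. Its hypothesis $b\ge 2$ corresponds to $1\le a\le w-2$.

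Relation \eqref{equ:KZrel2SMZversion} reads $\zeta(w-1,1)+\sum_{s=1}^{w-1}x_s\equiv 0$. Since $\zeta(w-1,1)=x_{w-1}$, this becomes
\begin{equation*}
-\sum_{s=1}^{w-2}x_s\equiv 2x_{w-1}.
\end{equation*}
This is \eqref{equ:FormalRel2} with $A=2$ and $B=0$.

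\textbf{Classical identities.} With $A=2$, $B=0$, formula \eqref{equ:FormalRel3} is precisely \eqref{equ:KZrel3SMZversion}. Formula \eqref{equ:FormalRel4} gives
\begin{equation*}
\zeta(w-1,1)\equiv -\tfrac12\sum_{a=1}^{w-2}\zeta_*(a,w-a).
\end{equation*}

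\textbf{Passing to the finite side.} By Hypothesis~\ref{Hyp:Key} and Conjecture~\ref{conj:KanekoZagierAltVersion}, $f_\ES$ is a well-defined isomorphism, so effective values are characterized by their images. By definition $f_\ES(Z_\calA^\eff(a,b))=\zeta_*(a,b)$ mod $\zeta(2)$.

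For the right-hand side of \eqref{equ:KZrel3FMZversion}, I need two facts.
\begin{itemize}
\item $f_\ES(\zeta_\calA(s,1,w-s-1))=\zeta_\ast^\Sy(s,1,w-s-1)$, which is the definition of $f_\ES$.
\item $f_\ES\big(Z_\calA^\eff(s)Z_\calA^\eff(w-s)\big)\equiv\zeta_*(s)\zeta(w-s)$.
\end{itemize}

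The second fact is the main obstacle. Multiplicativity of $f_\ES$ is not asserted anywhere, so I will avoid it by a parity check. Since $w$ is even, $s$ and $w-s$ have the same parity.
\begin{itemize}
\item If both are even, $Z_\calA^\eff(s)=0$ (it is $0$ for even $s$, and $Z_\calA^\eff(1)=0$). Also $\zeta_*(s)\zeta(w-s)\equiv 0$, because $\zeta(\text{even})\in\zeta(2)\Q$.
\item If both are odd and one equals $1$, both sides vanish, using $\zeta_*(1)=0$ and $Z_\calA^\eff(1)=0$.
\item If both are odd and $>1$, the product term is a genuine product of depth-one values.
\end{itemize}

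For that last case I expect to argue as follows. First express $Z_\calA^\eff(s)Z_\calA^\eff(w-s)$ via the stuffle relation of finite Euler sums, $\zeta_\calA(s)=0$, and \eqref{equ:FMZVdepth2}, rewriting it as a combination of $\zeta_\calA(s-1,1)\zeta_\calA(w-s-1,1)$. Then expand into depth-four finite sums. A cleaner alternative is to apply $f_\ES$ to the product $\zeta_\calA(s-1,1)\zeta_\calA(w-s-1,1)$, using that the stuffle product is respected on both sides, and then invoke the depth-one computation of \S\ref{sec:depthOne}.

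If neither route closes cleanly, I would treat these product terms as part of the known structure: the Kaneko--Zagier framework regards $f_\ES$ as an algebra map modulo $\zeta(2)$, and the paper implicitly uses this throughout.

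Granting this, injectivity of $f_\ES$ turns \eqref{equ:KZrel3SMZversion} into \eqref{equ:KZrel3FMZversion}. Likewise the displayed formula for $\zeta(w-1,1)$ transports to \eqref{equ:KZrel3FMZversion2}.
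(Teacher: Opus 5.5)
Your proof is correct and follows the paper's argument: the paper also derives \eqref{equ:KZrel3SMZversion} from Lemma~\ref{lem:linAlg} with $A=2$, $B=0$, $x_s=\zeta_*(s,w-s)$ and $\psi_a=\zeta_\ast^\Sy(a,1,w-a-1)-\zeta_*(a)\zeta(w-a)$, and then passes to the finite side without comment. Your worry about the product terms is justified, and your ``cleaner alternative'' settles it. For odd $s>1$ one has $Z_\calA^\eff(s)=\zeta_\calA(s-1,1)/s$. The finite stuffle product is exact, and symmetric MZVs satisfy the same stuffle relations modulo $\zeta(2)$ (Kaneko--Zagier). Hence well-definedness of $f_\ES$ alone gives $f_\ES\big(Z_\calA^\eff(s)Z_\calA^\eff(w-s)\big)\equiv\zeta(s)\zeta(w-s)$, so you do not need to appeal to an unstated algebra-map property.
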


\begin{proof}
We only need to prove \eqref{equ:KZrel3SMZversion} which follows from Lemma~\ref{lem:linAlg} by taking $A=2$, $B=0$,
$x_s= \zeta_*(s,w-s)$  and $\psi_a=\zeta_\ast^\Sy(a,1,w-a-1)-\zeta_*(a)\zeta(w-a)$.
\end{proof}

\begin{lem}\label{lem:triple=0}
Let $w\ge 4$ be an even number. Then
\begin{equation*}
\sum_{s=1}^{w-2} \zeta_\calA(s,1,w-s-1)=0.
\end{equation*}
\end{lem}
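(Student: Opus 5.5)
The plan is to show the identity exactly in $\calA$, prime by prime for large $p$, using only a few standard facts about finite MZVs. The basic input is that every depth-one value $\zeta_\calA(k)$ vanishes. In addition, every double value of even weight vanishes: by \eqref{equ:FMZVdepth2}, $\zeta_\calA(s,t)$ is a multiple of $B_{p-w}$, and $p-w$ is odd and $>1$ when $w$ is even, so $B_{p-w}=0$.

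\emph{Step 1 (stuffle with $\zeta_\calA(1)$).} For $1\le s\le w-2$ put $t=w-1-s$. Since $\zeta_\calA(1)=0$, the stuffle product gives
\begin{equation*}
0=\zeta_\calA(1)\zeta_\calA(s,t)=\zeta_\calA(1,s,t)+\zeta_\calA(s,1,t)+\zeta_\calA(s,t,1)+\zeta_\calA(s+1,t)+\zeta_\calA(s,t+1).
\end{equation*}
The last two terms are double values of even weight $w$, so they vanish. Summing over $s$ yields
\begin{equation*}
\sum_{s=1}^{w-2}\zeta_\calA(s,1,w-s-1)=-\sum_{s=1}^{w-2}\big[\zeta_\calA(1,s,t)+\zeta_\calA(s,t,1)\big].
\end{equation*}
Next apply the reversal relation $\zeta_\calA(k_1,\dots,k_r)=(-1)^{k_1+\cdots+k_r}\zeta_\calA(k_r,\dots,k_1)$, which comes from $n\mapsto p-n$. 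With $w$ even it gives $\zeta_\calA(s,t,1)=\zeta_\calA(1,t,s)$. As $s$ runs over $1,\dots,w-2$, so does $t$. Hence the right-hand side equals $-2\sum_{s=1}^{w-2}\zeta_\calA(1,s,w-1-s)$, and it remains to show that this sum vanishes.

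\emph{Step 2 (Hoffman's relation).} Apply Hoffman's relation, valid for finite MZVs, to the index $(1,w-2)$. The left-hand side is $\zeta_\calA(2,w-2)+\zeta_\calA(1,w-1)$, which is $0$ because both are even-weight double values. On the right-hand side, the first slot contributes nothing, since its range $0\le j\le k_1-2=-1$ is empty. The second slot contributes $\sum_{j=0}^{w-4}\zeta_\calA(1,w-2-j,j+1)$, that is $\sum_{s=2}^{w-2}\zeta_\calA(1,s,w-1-s)$. Therefore this partial sum vanishes, and only the term $\zeta_\calA(1,1,w-2)$ remains.

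\emph{Step 3 (the boundary term; main obstacle).} I expect the real difficulty to be showing $\zeta_\calA(1,1,w-2)=0$. My first attempt would be to invoke the known explicit evaluation of $\zeta_\calA(1,1,k)$. Equivalently, by reversal, this is $\zeta_\calA(k,1,1)$, which Hoffman and Zhao show is a rational multiple of $B_{p-k-2}$. With $k=w-2$ this is $B_{p-w}=0$.

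If I want to avoid citing that evaluation, I would use the stuffle $0=\zeta_\calA(1)\zeta_\calA(1,w-2)$ together with $0=\zeta_\calA(1,1)\zeta_\calA(w-2)$. Combined with reversal and the vanishing of even-weight double values, these give linear relations among $\zeta_\calA(1,1,w-2)$, $\zeta_\calA(1,w-2,1)$ and $\zeta_\calA(w-2,1,1)$. I would then check that this small system forces $\zeta_\calA(1,1,w-2)=0$.

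Once Step 3 is settled, Steps 1 and 2 give $\sum_{s=1}^{w-2}\zeta_\calA(s,1,w-s-1)=0$, which is the lemma.
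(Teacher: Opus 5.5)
\textbf{Verdict: there is a genuine gap, in Steps 2 and 3.}

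Step 1 is correct. It reduces the lemma to
\[
\sum_{s=1}^{w-2}\zeta_\calA(1,s,w-1-s)=0.
\]
Steps 2 and 3 then split off the term $s=1$ and claim that both pieces vanish separately. Neither piece vanishes.

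\emph{Step 2.} Hoffman's relation in the form you use is not a relation among finite MZVs in this paper's convention. For the index $(2)$ it would give $\zeta_\calA(3)=\zeta_\calA(2,1)$. But $\zeta_\calA(3)=0$, while \eqref{equ:FMZVdepth2} gives $\zeta_\calA(2,1)=B_{p-3}$.

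\emph{Step 3.} The Hoffman--Zhao evaluation of $\zeta_\calA(k,1,1)$ as a multiple of $B_{p-k-2}$ is an odd-weight result. In even weight, depth-three values are not multiples of $B_{p-w}$. Concretely, a direct computation gives $\zeta_\calA(1,1,4)\equiv 1 \pmod{11}$ and $\zeta_\calA(1,1,4)\equiv -2 \pmod{13}$, even though $B_{p-6}=0$. Both values agree with $-\tfrac16 B_{p-3}^2$.

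\emph{The fallback.} It cannot work either. After reversal and the vanishing of even-weight double values, the two products $\zeta_\calA(1)\zeta_\calA(1,w-2)=0$ and $\zeta_\calA(1,1)\zeta_\calA(w-2)=0$ give the same single relation,
\[
2\zeta_\calA(1,1,w-2)+\zeta_\calA(1,w-2,1)=0.
\]
This leaves a one-dimensional solution space, so it cannot force $\zeta_\calA(1,1,w-2)=0$.

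\emph{Why the conclusion still comes out right.} For every even $w$ one has $\sum_{s=2}^{w-2}\zeta_\calA(1,s,w-1-s)=-\zeta_\calA(1,1,w-2)$, by the argument in the next paragraph. So the false claims of Steps 2 and 3 err by exactly opposite amounts and cancel.

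\textbf{How to repair it.} Do not split the sum. Apply the finite (reversal) shuffle relation of Kaneko--Zagier to $\tb\sha\tb\ta^{w-3}\tb$. In the paper's convention this gives
\[
\sum_{s=1}^{w-2}\zeta_\calA(1,s,w-1-s)+\zeta_\calA(1,1,w-2)+\zeta_\calA(1,w-2,1)=(-1)^{w-1}\zeta_\calA(w-2,1,1)=-\zeta_\calA(1,1,w-2).
\]
Combine this with the stuffle relation $2\zeta_\calA(1,1,w-2)+\zeta_\calA(1,w-2,1)=0$. The result is $\sum_{s=1}^{w-2}\zeta_\calA(1,s,w-1-s)=0$, and your Step 1 then finishes the proof.

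This repaired argument takes a different route from the paper's. The paper uses the Saito--Wakabayashi sum formula over compositions with middle entry $\ge 2$, which vanishes in even weight. It subtracts this from the total depth-three sum, which is zero by stuffle. The repaired version needs only stuffle, reversal, and a single finite shuffle relation.
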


\begin{proof}
By taking $k=w$, $n=3$ and $i=2$ in \cite[Theorem 1.4]{SaitoWa2013b}, we get
\begin{equation*}
\sum_{a+b+c=w,b\ge 2} \zeta_\calA(a,b,c)=0.
\end{equation*}
But for all $a,b,c\in\N$, we have
\begin{align*}
0=\sum_{a+b+c=w}  \zeta_\calA(a,b)  \zeta_\calA(c) & =3 \sum_{a+b+c=w}  \zeta_\calA(a,b,c)
 + \sum_{a+b+c=w}\Big( \zeta_\calA(a,b+c) +\zeta_\calA(a+c,b) \Big)\\
 & =3 \sum_{a+b+c=w}  \zeta_\calA(a,b,c)=0.
\end{align*}
Hence
\begin{align*}
\sum_{s=1}^{w-2} \zeta_\calA(s,1,w-s-1)
&=\sum_{a+b+c=w}  \zeta_\calA(a,b,c)-\sum_{a+b+c=w,b\ge 2} \zeta_\calA(a,b,c)=0
\end{align*}
as desired.
\end{proof}

\begin{prop}\label{prop:doubleZetaProperties}
Let $w\ge 4$ be an even integer. Then,  we have
\begin{enumerate}
  \item[\upshape{(i)}] Stuffle relation: $Z_\calA^\eff(a)Z_\calA^\eff(w-a)= Z_\calA^\eff(a,w-a)+ Z_\calA^\eff(w-a,a)$ for all $1\le a\le w-1$;

  \item[\upshape{(ii)}] Sum formula: $\displaystyle \sum_{a=2}^{w-1} Z_\calA^\eff(a,w-a)=0$;

  \item[\upshape{(iii)}] Weighted sum formula: $\displaystyle \sum_{a=2}^{w-1} 2^a Z_\calA^\eff(a,w-a)=0$.
\end{enumerate}
\end{prop}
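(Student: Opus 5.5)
We work throughout under Hypothesis~\ref{Hyp:Key}, treating $Z_\calA^\eff(a,w-a)$ as the element of $\FES_w$ whose image under $f_\ES$ is $\zeta_*(a,w-a)$ modulo $\zeta(2)$. Each of the three identities follows on the finite side from explicit formulas already proved. Where convenient, we also check the classical counterpart modulo $\zeta(2)$ and transport it.

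\emph{Stuffle relation (i).} On the classical side, $\zeta_*(a)\zeta_*(w-a)=\zeta_*(a,w-a)+\zeta_*(w-a,a)+\zeta(w)$, and $\zeta(w)\equiv 0$ modulo $\zeta(2)$ since $w$ is even. On the finite side I argue directly from the definitions. For $a$ and $b=w-a$ both odd, \eqref{equ:effRiemann} gives $Z_\calA^\eff(a)$ and $Z_\calA^\eff(b)$. For $a$ and $b$ both even, the effective single values vanish, and I must show $Z_\calA^\eff(a,b)+Z_\calA^\eff(b,a)=0$. I would do this by applying \eqref{equ:KZrel3FMZversion} to both terms and symmetrizing, using that $(-1)^a=(-1)^b$. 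The cleaner route is to observe that $\zeta_*(a,b)+\zeta_*(b,a)-\zeta_*(a)\zeta_*(b)\equiv 0$ modulo $\zeta(2)$, so both sides have the same image under $f_\ES$, and the conjectural injectivity built into the notion of ``the'' effective sum gives equality. I will adopt this transport principle, which the paper implicitly uses in defining $Z_\calA^\eff$: an identity among effective values holds exactly when its image identity holds in $\ES_w/\zeta(2)\ES_{w-2}$.

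\emph{Sum formula (ii).} Equation \eqref{equ:KZrel2SMZversion} says $\zeta(w-1,1)+\sum_{s=1}^{w-1}\zeta_*(s,w-s)\equiv0$. Hence $\sum_{a=2}^{w-1}\zeta_*(a,w-a)\equiv -\zeta(w-1,1)-\zeta_*(1,w-1)$. By stuffle, $\zeta_*(1,w-1)+\zeta(w-1,1)=\zeta_*(1)\zeta(w-1)-\zeta(w)=-\zeta(w)\equiv0$, using $\zeta_*(1)=T=0$. So the sum vanishes modulo $\zeta(2)$. For a finite-side verification, one can combine \eqref{equ:KZrel3FMZversion2} with the $a=1$ term: the stuffle relation (i) gives $Z_\calA^\eff(1,w-1)+Z_\calA^\eff(w-1,1)=0$, since $Z_\calA^\eff(1)=0$. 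Together with \eqref{equ:KZrel3FMZversion2} this yields $\sum_{a=1}^{w-1}Z_\calA^\eff(a,b)=-Z_\calA^\eff(w-1,1)$. Removing the $a=1$ term then gives $\sum_{a=2}^{w-1}Z_\calA^\eff(a,w-a)=0$.

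\emph{Weighted sum formula (iii).} This is the main obstacle. The classical input is the first identity of Cor.~\ref{cor:ESWtSumFormula}, $\sum 2^a\zeta(a,b)=(w+1)\zeta(w)\equiv0$ modulo $\zeta(2)$ for even $w$, which transports immediately. For an intrinsic finite check, I would insert \eqref{equ:KZrel3FMZversion} and swap sums. This produces coefficients $\sum_{a}(-2)^a[\binom{s}{a}-\frac12\binom{w-1}{a}]$ multiplying $\Psi_s$. Evaluating these gives $(-1)^s-1$ minus $\frac12\big((-1)^{w-1}-1\big)$, up to boundary terms at $a=0,1,w-1$. They should reduce to a constant multiple of $\sum_s\Psi_s$, plus a $(-1)^s$-weighted sum. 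That remaining sum should vanish by Lemma~\ref{lem:triple=0}, together with $\sum_s Z_\calA^\eff(s)Z_\calA^\eff(w-s)$ identities from \eqref{equ:effRiemann}. The binomial bookkeeping, especially the boundary terms for $a=1$ and $a=w-1$ that come from \eqref{equ:KZrel3FMZversion2}, is where I expect the effort to lie. I would fall back on the transport argument if this bookkeeping becomes unwieldy.
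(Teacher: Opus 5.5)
\textbf{Verdict: there is a genuine gap.} Your argument rests on the ``transport principle,'' and that principle is not available. Hypothesis~\ref{Hyp:Key} only says $f_\ES$ is well defined. That lets you push an identity in $\calA$ forward to $\ES_w/\zeta(2)\ES_{w-2}$. Pulling a classical congruence back to an equality in $\calA$ needs injectivity of $f_\ES$, which is the deep half of Conjecture~\ref{conj:KanekoZagierAltVersion}; the paper stresses that it is not even known whether $\FMZV$ contains a nonzero element.

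The paper never uses injectivity. Defining $Z_\calA^\eff(a,w-a)$ by \eqref{equ:KZrel3FMZversion} and \eqref{equ:KZrel3FMZversion2} only needs well-definedness, and the word ``the'' is explicitly conditional on the conjecture. Statements (i)--(iii) are identities in $\calA$ proved directly on the finite side, which is exactly why they count as evidence. So your classical checks, though correct, establish nothing here. One part of your proposal does survive: the reduction of (ii) to the case $a=1$ of (i) is valid. Indeed, \eqref{equ:KZrel3FMZversion2} gives $\sum_{a=2}^{w-1}Z_\calA^\eff(a,w-a)=-X_1$, where $X_a:=Z_\calA^\eff(a,w-a)+Z_\calA^\eff(w-a,a)$. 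But (i) and (iii) remain unproved.

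Two finite-side inputs are missing.

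\emph{First input: the reversal relation} $\zeta_\calA(s,1,w-s-1)=\zeta_\calA(w-s-1,1,s)$. Put $\Psi_s=\zeta_\calA(s,1,w-s-1)-Z_\calA^\eff(s)Z_\calA^\eff(w-s)$ and substitute \eqref{equ:KZrel3FMZversion}. Summing the binomials turns $2X_1$ into $-\sum_s(2s-w)\Psi_s$, and turns $2\sum_a 2^aZ_\calA^\eff(a,w-a)$ into $\sum_s c_s\Psi_s$ with $c_s$ linear in $s$ plus a multiple of $(-1)^s$. The paper then substitutes $s\mapsto w-1-s$ in the triple values, which is where reversal is used, and $s\mapsto w-s$ in the products. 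After this, the triple values carry a constant coefficient, so their sum vanishes by Lemma~\ref{lem:triple=0}. The products carry coefficients that are zero (case $a=1$) or multiples of $(-1)^s+1$, which vanish because $Z_\calA^\eff(s)=0$ for even $s$. Your sketch of (iii) cites Lemma~\ref{lem:triple=0} but not reversal. Without reversal, neither the linear terms nor the $(-1)^s$-weighted triple terms cancel.

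\emph{Second input: a deeper relation for (i) with $2\le a\le w-2$.} Your odd case is not argued at all, and symmetrizing does not settle the even case. One gets $X_a=(-1)^a\sum_{s=1}^{w-2}\big[\binom{s}{a}+\binom{s}{w-a}-\frac12\binom{w}{a}\big]\Psi_s$, and reversal plus Lemma~\ref{lem:triple=0} do not kill this. For example, with $w=6$ and $a=2$ they only reduce $X_2$ to $3\zeta_\calA(1,1,4)+\frac92 Z_\calA^\eff(3)^2$. Its vanishing is a genuine evaluation of a finite triple value. The paper takes it from \cite[Cor.~5.9]{BachmannRisan2026}, which expresses this binomially weighted sum of $\zeta_\calA(s,1,w-s-1)$ through the products $Z_\calA^\eff(s)Z_\calA^\eff(w-s)$. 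That yields $X_a=-(-1)^aZ_\calA^\eff(a)Z_\calA^\eff(w-a)=Z_\calA^\eff(a)Z_\calA^\eff(w-a)$.
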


\begin{rem}
When the weight is odd, the three corresponding formulas are given in Prop.~\ref{prop:StuffleRel}, Prop.~\ref{prop:SumFormula}, and \eqref{equ:OddWtSum(a,b)}.
\end{rem}

\begin{proof}
(i). We first observe that
\begin{align}\label{equ:CoefSumId}
\sum_{a=2}^{w-2}(-1)^a
\left[\binom{s}{a}-\frac12\binom{w-1}{a}\right]
=-\frac 12+\left(\binom{s}{1}-\frac12\binom{w-1}{1}\right)+\frac 12(-1)^{w-1}=s-\frac 12-\frac w2.
\end{align}
Set $\Psi_s =\zeta_\calA(s,1,w-s-1)-Z_\calA^\eff(s)Z_\calA^\eff(w-s)$ for $1\le s\le w-2$. Let
\begin{align*}
X_a:=&  Z_\calA^\eff(a,w-a)+ Z_\calA^\eff(w-a,a).
\end{align*}
Then we have
\begin{align*}
2X_1=&\, Z_\calA^\eff(1,w-1)-\sum_{a=2}^{w-2}Z_\calA^\eff(a,w-a) \\
 =&\,-\sum_{s=1}^{w-2} \left[ s-\frac{w-1}2 +\sum_{a=2}^{w-2} (-1)^a \left( \binom{s}{a}-\frac12\binom{w-1}{a}\right)\right] \Psi_s
 =-\sum_{s=1}^{w-2} \big(2s-w\big) \Psi_s.
\end{align*}
Note that $Z_\calA^\eff(1)=0$. By applying the substitution $s\to w-s-1$ for the sum involving triple finite MZVs,
the substitution $s\to w-s$ for the sum involving product of effective Riemann zetas,
and the reversal relation $\zeta_\calA(s,1,w-s-1)=\zeta_\calA(w-s-1,1,s)$, we get
\begin{align*}
4X_1 =&\, -\sum_{s=1}^{w-2} \big(2s-w+2(w-s-1)-w\big)\zeta_\calA(s,1,w-s-1) \\
&\, +\sum_{s=1}^{w-2} \big(2s-w+2(w-s)-2\big) Z_\calA^\eff(s)Z_\calA^\eff(w-s)\Big) \\
=&\, 2\sum_{s=1}^{w-2} \zeta_\calA(s,1,w-s-1)=0.
\end{align*}
by Lemma~\ref{lem:triple=0}. This proves (i) when $a=1$ or $a=w-1$.

Now we assume $2\le a\le w-2$. Then
\begin{align*}
X_a=&\,   (-1)^a \sum_{s=1}^{w-2} \left[ \binom{s}{a}-\frac12\binom{w-1}{a}+\binom{s}{w-a}-\frac12\binom{w-1}{w-a}\right]\Psi_s\\
=&\,   (-1)^a \sum_{s=1}^{w-2} \left[ \binom{s}{a}+\binom{s}{w-a}-\frac12\binom{w}{a}\right] \Big(\zeta_\calA(s,1,w-s-1)-Z_\calA^\eff(s)Z_\calA^\eff(w-s)\Big).
\end{align*}
By Lemma~\ref{lem:triple=0} and \cite[Cor.~5.9]{BachmannRisan2026},
\begin{align*}
&\, \sum_{s=1}^{w-2} \left[ \binom{s}{a}+\binom{s}{w-a}-\frac12\binom{w}{a}\right]\zeta_\calA(s,1,w-s-1)\\
=&\, -\frac12  \sum_{s=2}^{w-2}  \sum_{\substack{i+j=a\\ i<\min\{a,s\}\\ j<\min\{a,w-s\} }} \binom{s}{i}\binom{w-s}{j}  Z_\calA^\eff(s)Z_\calA^\eff(w-s)\\
=&\, \frac12  \Bigg\{\sum_{s=2}^{w-2}  \left[\binom{s}{a}+\binom{s}{w-a}+\binom{w-s}{a}+\binom{w-s}{w-a} - \binom{w}{a}\right] Z_\calA^\eff(s)Z_\calA^\eff(w-s) \\
&\, - \binom{a}{a}Z_\calA^\eff(a)Z_\calA^\eff(w-a)-\binom{w-a}{w-a}Z_\calA^\eff(a)Z_\calA^\eff(w-a)\Bigg\}\\
=&\,  \left\{\sum_{s=2}^{w-2}  \left[\binom{s}{a}+\binom{s}{w-a} - \frac12 \binom{w}{a}\right] Z_\calA^\eff(s)Z_\calA^\eff(w-s)\right\}- Z_\calA^\eff(a)Z_\calA^\eff(w-a)
\end{align*}
by the substitution $s\to w-s$, we see that
\begin{align*}
X_a=& \, -(-1)^a Z_\calA^\eff(a)Z_\calA^\eff(w-a)=Z_\calA^\eff(a)Z_\calA^\eff(w-a)
\end{align*}
since $Z_\calA^\eff(a)=0$ if $a$ is even. This completes the proof of (i) for all $1\le a\le w-1$.

\bigskip
(ii). By \eqref{equ:KZrel3FMZversion2},
\begin{align*}
2 \sum_{a=2}^{w-1} Z_\calA^\eff(a,w-a)& = -Z_\calA^\eff(1,w-1)+ \sum_{a=2}^{w-2} Z_\calA^\eff(a,w-a)\\
& = \sum_{a=2}^{w-2} (-1)^a \sum_{s=1}^{w-2} \left[ \binom{s}{a}-\frac12\binom{w-1}{a}\right] \Psi_s
+\sum_{s=1}^{w-2} \left[ \binom{s}{1}-\frac12\binom{w-1}{1}\right] \Psi_s.
\end{align*}
Hence,
\begin{align*}
2 \sum_{a=2}^{w-1} Z_\calA^\eff(a,w-a)
& = \sum_{s=1}^{w-2}(2s-w) \Psi_s.
\end{align*}
Note that $Z_\calA^\eff(1)=0$. By applying the same substitutions as in the proof of (i) and the reversal relation $\zeta_\calA(s,1,w-s-1)=\zeta_\calA(w-s-1,1,s)$, we get
\begin{align*}
4\sum_{a=2}^{w-1} Z_\calA^\eff(a,w-a)
& =  \sum_{s=1}^{w-2}\big(2s-w\big)\zeta_\calA(s,1,w-s-1)
- \sum_{s=2}^{w-2}\big(2s-w\big) Z_\calA^\eff(s)Z_\calA^\eff(w-s)\\
&+ \sum_{s=1}^{w-2}\big(2(w-s-1)-w\big) \zeta_\calA(s,1,w-s-1)
-\sum_{s=2}^{w-2}\big(2(w-s)-w\big) Z_\calA^\eff(s)Z_\calA^\eff(w-s)\\
&=-2\sum_{s=1}^{w-2} \zeta_\calA(s,1,w-s-1)=0
\end{align*}
by Lemma~\ref{lem:triple=0}. This proves (ii).

\bigskip
(iii). By \eqref{equ:KZrel3FMZversion2},
\begin{align*}
2\sum_{a=2}^{w-1} 2^a Z_\calA^\eff(a,w-a)& = \sum_{a=2}^{w-2} (2^{a+1} -2^w)Z_\calA^\eff(a,w-a)\\
& = \sum_{a=2}^{w-2} (-1)^a (2^{a+1} -2^w) \sum_{s=1}^{w-2} \left[ \binom{s}{a}-\frac12\binom{w-1}{a}\right] \Psi_s.
\end{align*}
Since
\begin{align*}
\sum_{a=2}^{w-2}(-1)^a  2^{a+1}
\left[\binom{s}{a}-\frac12\binom{w-1}{a}\right]
& =2(-1)^s-(-1)^{w-1}+2\left(-\frac 12+2\left(\binom{s}{1}-\frac12\binom{w-1}{1}\right)+\frac 12(-2)^{w-1}\right) \\
&=2(-1)^s+4s+2-2w-2^{w-1},
\end{align*}
and
\begin{align*}
 \sum_{a=2}^{w-2}(-1)^a  2^w
\left[\binom{s}{a}-\frac12\binom{w-1}{a}\right]
= 2^w \left(s-\frac 12-\frac w2\right),
\end{align*}
we find that
\begin{align*}
2\sum_{a=2}^{w-1} 2^a Z_\calA^\eff(a,w-a)
& = \sum_{s=1}^{w-2}\Big(2(-1)^s+(4-2^w)s+2+(2^{w-1}-2)w\Big) \Psi_s.
\end{align*}
By applying the same substitutions as above, we get
\begin{align*}
8\sum_{a=2}^{w-1} 2^a  Z_\calA^\eff(a,w-a)
& =  \sum_{s=1}^{w-2}\Big(2(-1)^s+(4-2^w)s+2+(2^{w-1}-2)w\Big) \zeta_\calA(s,1,w-s-1) \\
&+ \sum_{s=1}^{w-2}\Big(-2(-1)^s+(4-2^w)(w-s-1)+2+(2^{w-1}-2)w\Big) \zeta_\calA(s,1,w-s-1)\\
&- \sum_{s=2}^{w-2}\Big(2(-1)^s+(4-2^w)s+2+(2^{w-1}-2)w\Big)  Z_\calA^\eff(s)Z_\calA^\eff(w-s)\\
&-\sum_{s=2}^{w-2}\Big(2(-1)^s+(4-2^w)(w-s)+2+(2^{w-1}-2)w\Big) Z_\calA^\eff(s)Z_\calA^\eff(w-s)\\
&= 2^w\sum_{s=1}^{w-2}\zeta_\calA(s,1,w-s-1) -4\sum_{s=2}^{w-2} ((-1)^s+1) Z_\calA^\eff(s)Z_\calA^\eff(w-s)=0
\end{align*}
since $Z_\calA^\eff(s)=0$ for all even $s$. This concludes the proof of (iii) and the proposition.
\end{proof}

\subsection{Effective Euler sums $Z_\calA^\eff(\ol{a},b)$ and $Z_\calA^\eff(a,\ol{b})$ of even weight}
We will construct the effective double Euler sums of the form $Z_\calA^\eff(\ol{a},b)$ and $Z_\calA^\eff(a,\ol{b})$ of even weight in this section. As before, we first express some special symmetric triple Euler sums of even weight in terms of double and product of single Euler sums by the parity principle.

\begin{thm}\label{thm:effDBES(-1,1)}
Assume $b\ge 2$ and $w=a+b$ is even. Then, modulo $\zeta(2)$ we have
\begin{align}\label{equ:KZrel1SMES(-1,1)version}
&\, \zeta_\ast^\Sy(\ol{a},1,\ol{b-1})
    \equiv -b\zeta(\ol{a})\zeta(\ol{b})
            -(a+2)\zeta(\ol{a+1})\zeta(\ol{b-1})
            +(-1)^{a}\sum_{s=1}^{w-1}\binom{s}{a}\zeta_*(\ol{s},w-s),\\
&\zeta(\ol{w-1},1)+\sum_{s=1}^{w-1} \zeta(\ol{s},w-s)\equiv \zeta(\ol{1})\zeta(\ol{w-1}).\label{equ:KZrel2SMES(-1,1)version}
\end{align}
\end{thm}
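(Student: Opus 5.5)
The plan mirrors the proof of Theorem~\ref{thm:KZeffDBZs}, now with $\eps=-1$: the second identity will come from a shuffle product and the first from Cor.~\ref{cor:SMZV(a,1,b-1;eps,1,eps)}.

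\textbf{The relation \eqref{equ:KZrel2SMES(-1,1)version}.} I would start from the shuffle product of $\zeta(\ol{1})$ with $\zeta(\ol{w-1})$. In the level-two iterated-integral alphabet, a barred index is encoded by the letter $\tc=dt/(1+t)$ at the relevant position. Concretely, $\zeta(\ol{1})$ is a single letter, and $\zeta(\ol{w-1})$ is a word of the form $\ta^{w-2}\tc$ (up to sign). Shuffling the single letter into the word $\ta^{w-2}\tc$ produces three kinds of words:
\begin{enumerate}
\item inserting the letter at the end gives the depth-two word corresponding to $\zeta(\ol{w-1},1)$ (with the sign pattern fixed by the letter conventions);
\item inserting it before position $s$ among the $\ta$'s gives the words for $\zeta(\ol{s},w-s)$, summed over $s$;
\item any remaining term is a depth-one word, which is either $\zeta(\ol{w})$ or $\zeta(w)$.
\end{enumerate}
Since $w$ is even, $\zeta(w)$ and $\zeta(\ol{w})=(2^{1-w}-1)\zeta(w)$ are both $\equiv0 \pmod{\zeta(2)}$, so the depth-one terms drop out. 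Reading the signs carefully then yields \eqref{equ:KZrel2SMES(-1,1)version}. The regularization is harmless here: the $s=1$ term $\zeta(\ol{1},w-1)$ is convergent, and shuffle and stuffle regularizations agree in depth two modulo $\zeta(2)$.

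\textbf{The relation \eqref{equ:KZrel1SMES(-1,1)version}.} Apply Cor.~\ref{cor:SMZV(a,1,b-1;eps,1,eps)} with $\eps=-1$. The terms with coefficient $\eps+1$ vanish, and the terms with coefficient $\eps-1=-2$ survive, giving
\begin{equation*}
2\zeta_\ast^\Sy(a,1,b-1;\eps,1,\eps)\equiv -2\zeta(\ol{a})\zeta(\ol{b})-2b\zeta(\ol{a})\zeta(\ol{b})-2(a+1)\zeta(\ol{a+1})\zeta(\ol{b-1})-2(-1)^a\sum_{s}\binom{s}{b-1}\zeta_*(\ol{s},w-s).
\end{equation*}
This is not yet the target: the binomial is $\binom{s}{b-1}$ rather than $\binom{s}{a}$, and the coefficients of the products differ.

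As in the proof of Theorem~\ref{thm:KZeffDBZs}, I would fix this by the substitution $(a,b)\to(b-1,a+1)$. This requires the reversal relation
\begin{equation*}
\zeta_\ast^\Sy(b-1,1,a;\eps,1,\eps)=\pm\zeta_\ast^\Sy(a,1,b-1;\eps,1,\eps),
\end{equation*}
which follows from the definition, since reversing the string replaces each factor by its partner with sign $\prod(-1)^{s_j}\eps_j$. Here the total sign is $(-1)^w\eps^2=1$, and $a$ and $b$ have the same parity, so the prefactor $(-1)^{b-1}=-(-1)^a$.

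After the substitution, the product terms become $-(a+1)\zeta(\ol{b-1})\zeta(\ol{a+1})-\ldots$, with the exact coefficients to be matched in the computation. I then need to reconcile the two kinds of products $\zeta(\ol{a})\zeta(\ol{b})$ and $\zeta(\ol{a+1})\zeta(\ol{b-1})$. When $a$ is odd, $b$ is odd, so $b-1$ and $a+1$ are even and the second product is $\equiv0$; when $a$ is even, the first product is $\equiv0$. Thus in each parity case only one product survives, and its coefficient must be checked against the target $-b$ or $-(a+2)$.

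\textbf{Main obstacle.} The hard part is this sign and coefficient bookkeeping: getting the sign of $\zeta_\ast^\Sy$ under reversal right, and the case $b-1=1$, where $\zeta_*(\ol{1})$ is regularized. If the coefficients fail to match, I would fall back on adding a multiple of the shuffle product formula
\begin{equation*}
\zeta(\ol{a})\zeta(\ol{b})=\sum_s\Big[\binom{s-1}{a-1}+\binom{s-1}{b-1}\Big]\zeta_*(\ol{s},w-s),
\end{equation*}
the same identity used in the proof of Cor.~\ref{cor:SMZV(a,1,b-1;eps,1,eps)}. This trades product terms against binomial sums, and I would use it to convert $\binom{s}{b-1}$ into $\binom{s}{a}$ via Pascal's rule.
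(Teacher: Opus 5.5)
Your route is the paper's: Cor.~\ref{cor:SMZV(a,1,b-1;eps,1,eps)} at $\eps=-1$, the reversal $\zeta_\ast^\Sy(\ol{b-1},1,\ol{a})=\zeta_\ast^\Sy(\ol{a},1,\ol{b-1})$ and the substitution $(a,b)\to(b-1,a+1)$ give the first identity, and the shuffle $\tc\sha\ta^{w-2}\tc=\ta^{w-2}\tc^2+\sum_{s=1}^{w-1}\ta^{s-1}\tc\ta^{w-s-1}\tc$ gives the second. The bookkeeping you flagged closes at once: the substitution sends $-(b+1)\zeta(\ol{a})\zeta(\ol{b})$ to $-(a+2)\zeta(\ol{a+1})\zeta(\ol{b-1})$ and $-(a+1)\zeta(\ol{a+1})\zeta(\ol{b-1})$ to $-b\zeta(\ol{a})\zeta(\ol{b})$ (your ``$-(a+1)$'' is a slip), so no parity split or fallback is needed, and since every word in that shuffle contains two letters $\tc$ there are no depth-one terms, so the second identity holds exactly.
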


\begin{proof}
Taking $\eps=-1$ in Cor.~\ref{cor:SMZV(a,1,b-1;eps,1,eps)} we get
\begin{align*}
&\, \zeta_\ast^\Sy(\ol{a},1,\ol{b-1})
\equiv -(b+1)\zeta(\ol{a})\zeta(\ol{b})
-(a+1)\zeta(\ol{a+1})\zeta_*(\ol{b-1})
-(-1)^{a}\sum_{s=1}^{w-1}\binom{s}{b-1}\zeta_*(\ol{s},w-s) .
\end{align*}
Since $\zeta_\ast^\Sy(\ol{b-1},1,\ol{a})=\zeta_\ast^\Sy(\ol{a},1,\ol{b-1})$, applying the substitution $(a,b)\to (b-1,a+1)$, we can immediately obtain \eqref{equ:KZrel1SMES(-1,1)version}.

Finally, setting $\tc=dt/(1+t)$ we see that \eqref{equ:KZrel2SMES(-1,1)version} follows easily from the shuffle product
\begin{equation*}
    \tc\sha \ta^{w-2} \tc=\ta^{w-2} \tc^2 +\sum_{s=1}^{w-1} \ta^{s-1} \tc \ta^{w-s-1} \tc.
\end{equation*}
This completes the proof of the theorem.
\end{proof}

\begin{cor}\label{cor:KZeffDBES(-1,1)}
Suppose $a,b$ are positive integers with even $w=a+b\ge 4$. If $1\le a\le w-2$ then we have
\begin{align}
&\zeta(\ol{a},b)\equiv
 \sum_{s=1}^{w-2} \left[ \binom{s}{a}-\frac12\binom{w-1}{a}\right]
\left\{\aligned
&\zeta_\ast^\Sy(\ol{s},1,\ol{r-1})+r\zeta(\ol{s})\zeta(\ol{r})\\
&+(s+2)\zeta(\ol{s+1})\zeta(\ol{r-1})
\endaligned \right\}
-\frac{(-1)^a }2\binom{w-1}{a}\zeta(\ol{1})\zeta(\ol{w-1}) ,  \label{equ:KZrel3SMES(-1,1)}\\
&Z_\calA^\eff(\ol{a},b)=
 (-1)^a \sum_{s=1}^{w-2} \left[ \binom{s}{a}-\frac12\binom{w-1}{a}\right]
\left\{\aligned
&\zeta_\calA(\ol{s},1,\ol{r-1})+rZ_\calA^\eff(\ol{s})Z_\calA^\eff(\ol{r})\\
&+(s+2)Z_\calA^\eff(\ol{s+1})Z_\calA^\eff(\ol{r-1})
\endaligned \right\} \notag\\
&\hskip9cm-\frac{(-1)^a }2\binom{w-1}{a}Z_\calA^\eff(\ol{1})Z_\calA^\eff(\ol{w-1}) ,  \label{equ:KZrel3FES(-1,1)}
\end{align}
where $r=w-s$, and
\begin{align} \label{equ:KZrel1SMES(1,-1)}
\zeta_\ast(a,\ol{b})\equiv &\, \zeta_\ast(a)\zeta(\ol{b})-\zeta(\ol{b},a),\\
Z_\calA^\eff(a,\ol{b})\equiv &\, Z_\calA^\eff(a)\zeta_\calA(\ol{b})-Z_\calA^\eff(\ol{b},a).\label{equ:KZrel2FES(1,-1)}
\end{align}
Further,
\begin{equation} \label{equ:KZrel3SMES(1,-1)}
Z_\calA^\eff(\ol{w-1},1) = \frac12\left( Z_\calA^\eff(\ol{1})Z_\calA^\eff(\ol{w-1})-\sum_{s=1}^{w-2}  Z_\calA^\eff(\ol{s},w-s)\right).
\end{equation}
Additionally, for all $1\le a\le w-1$ we have
\begin{align} \label{equ:KZrel3SMES(1,-1)2}
Z_\calA^\eff(a,\ol{b})\equiv &\, Z_\calA^\eff(a)Z_\calA^\eff(\ol{b})-Z_\calA^\eff(\ol{b},a).
\end{align}
\end{cor}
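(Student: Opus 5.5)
The plan is to follow the proof of Cor.~\ref{cor:KZeffDBZs}, applying Lemma~\ref{lem:linAlg} to the system in Theorem~\ref{thm:effDBES(-1,1)}.

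\textbf{Step 1: the formula for $\zeta(\ol{a},b)$.} Set $x_s=\zeta_*(\ol{s},w-s)$ for $1\le s\le w-1$. Rewrite \eqref{equ:KZrel1SMES(-1,1)version} as
\begin{equation*}
\sum_{s=a}^{w-1}\binom{s}{a}x_s\equiv(-1)^a\psi_a,\qquad
\psi_a=\zeta_\ast^\Sy(\ol{a},1,\ol{b-1})+b\zeta(\ol{a})\zeta(\ol{b})+(a+2)\zeta(\ol{a+1})\zeta(\ol{b-1}).
\end{equation*}
This holds for $1\le a\le w-2$; for $a=1$ the case $b-1=w-2$ is still admissible. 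Equation \eqref{equ:KZrel2SMES(-1,1)version} says $-\sum_{s=1}^{w-2}x_s=2x_{w-1}-\zeta(\ol{1})\zeta(\ol{w-1})$, since $\zeta(\ol{w-1},1)=x_{w-1}$. So Lemma~\ref{lem:linAlg} applies with $A=2$ and $B=-\zeta(\ol{1})\zeta(\ol{w-1})$, and \eqref{equ:FormalRel3} gives \eqref{equ:KZrel3SMES(-1,1)}. The lemma is purely formal, so it holds in the quotient modulo $\zeta(2)$. One caveat: I expect a sign $(-1)^a$ in front of the sum, as in \eqref{equ:KZrel3FES(-1,1)}. I will check whether it is missing in \eqref{equ:KZrel3SMES(-1,1)} or absorbed; the finite version has it, so I will carry it.

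\textbf{Step 2: the finite formula \eqref{equ:KZrel3FES(-1,1)}.} Apply Hypothesis~\ref{Hyp:Key}. The effective element should satisfy $f_\ES Z^\eff_\calA(\ol{a},b)\equiv\zeta(\ol{a},b)$. Each term on the right of \eqref{equ:KZrel3SMES(-1,1)} has an explicit preimage:
\begin{itemize}
\item $\zeta_\ast^\Sy(\ol{s},1,\ol{r-1})$ is $f_\ES$ of $\zeta_\calA(\ol{s},1,\ol{r-1})$ by \eqref{equ:defn_f_ES};
\item the depth-one Euler sums are images of $Z_\calA^\eff(\ol{\cdot})$ by the Proposition of Section~\ref{sec:depthOne};
\item the products are images of products, because $f_\ES$ is compatible with multiplication under the conjecture, the finite stuffle mirroring the classical stuffle.
\end{itemize}
So we define $Z^\eff_\calA(\ol{a},b)$ by the right side, and the formula holds by construction.

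\textbf{Step 3: the $(a,\ol{b})$ case.} The stuffle relation gives $\zeta(a)\zeta(\ol{b})=\zeta(a,\ol{b})+\zeta(\ol{b},a)+\zeta(\ol{w})$. Since $w$ is even, $\zeta(\ol{w})$ is a rational multiple of $\zeta(w)\in\zeta(2)\Q$, so it vanishes modulo $\zeta(2)$. This gives \eqref{equ:KZrel1SMES(1,-1)}, and pulling it back gives \eqref{equ:KZrel2FES(1,-1)}. The only point to handle is $a=1$, where $\zeta_*(1)$ regularizes to $0$. This matches $Z^\eff_\calA(1)=0$, so the statement extends to all $1\le a\le w-1$, giving \eqref{equ:KZrel3SMES(1,-1)2}. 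In the finite version, $\zeta_\calA(\ol{b})=2Z_\calA^\eff(\ol{b})$ appears; one must check whether the coefficient should be $Z^\eff$. I will reconcile this using $Z_\calA^\eff(a)\zeta_\calA(\ol b)$: for even $w$, exactly one of $a,b$ being odd forces one factor to vanish, except when both are odd. Finally, \eqref{equ:KZrel3SMES(1,-1)} is \eqref{equ:FormalRel4} with $A=2$ and the above $B$, pulled back.

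\textbf{Main obstacle.} The hard part is the bookkeeping in Step 1. I must verify that Theorem~\ref{thm:effDBES(-1,1)}'s relation covers every index $1\le a\le w-2$, including $b-1=1$ and the regularized endpoints. I must also check that the sign conventions between $\psi_a$ and Lemma~\ref{lem:linAlg} match exactly.
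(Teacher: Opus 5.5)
Your proposal is correct and follows the paper's own proof: Lemma~\ref{lem:linAlg} with $A=2$, $B=-\zeta(\ol{1})\zeta(\ol{w-1})$, $x_s=\zeta_*(\ol{s},w-s)$ and your $\psi_a$. The remaining identities come from the stuffle relation (using $\zeta(\ol{w})\equiv 0$) and pulling back along $f_\ES$, and you are right that a factor $(-1)^a$ is missing in front of the sum in \eqref{equ:KZrel3SMES(-1,1)}.

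One slip in Step~3: for even $w$, $a$ and $b$ always have the same parity, so there is no ``exactly one odd'' case. The two finite versions differ only when $a,b$ are both odd with $a\ge 3$, and there only $Z_\calA^\eff(a)Z_\calA^\eff(\ol{b})$ is compatible with $f_\ES(\zeta_\calA(\ol{b}))=2\zeta(\ol{b})$. So \eqref{equ:KZrel2FES(1,-1)} should be read as \eqref{equ:KZrel3SMES(1,-1)2}.
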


\begin{proof}
We only need to prove \eqref{equ:KZrel3SMES(-1,1)} which follows from Lemma~\ref{lem:linAlg} by taking $A=2$, $B=-\zeta(\ol{1})\zeta(\ol{w-1}) $,
$x_s=\zeta_*(\ol{s},w-s)$ and
\begin{align*}
\psi_a=\zeta_\ast^\Sy(\ol{a},1,\ol{b-1})
   +b\zeta(\ol{a})\zeta(\ol{b})
            +(a+2)\zeta(\ol{a+1})\zeta(\ol{b-1}) \quad \text{for}\quad 1\le a\le w-2,
\end{align*}
where $b=w-a$.
\end{proof}

\begin{eg} For any even weight $w$, we have
\begin{align}
Z_\calA^\eff(\ol{1},w-1)
=&\, \frac12(w-1)Z_\calA^\eff(\ol{1})Z_\calA^\eff(\ol{w-1}) \notag \\
&\,-\sum_{s=1}^{w-2} \left( s-\frac{w-1}2 \right)
\left\{\aligned
&\zeta_\calA(\ol{s},1,\ol{w-s-1})+(w-s)Z_\calA^\eff(\ol{s})Z_\calA^\eff(\ol{w-s})\\
&+(s+2)Z_\calA^\eff(\ol{s+1})Z_\calA^\eff(\ol{w-s-1})\endaligned \right\}.\label{equ::KZrel3SMES(1,-1)eg}
\end{align}
\end{eg}

\begin{prop}
Let $w\ge 4$ be an even integer. Then we have
\begin{align}
\sum_{a=2}^{w-1} 2^a Z_\calA^\eff(\ol{a},w-a)=&\,0. \label{equ:ConjWtSumFormulaFDESbarbar}
\end{align}
\end{prop}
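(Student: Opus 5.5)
We follow the template of Proposition~\ref{prop:doubleZetaProperties}(iii). Write $x_a:=Z_\calA^\eff(\ol{a},w-a)$ for $1\le a\le w-1$, and
\[
\Phi_s:=\zeta_\calA(\ol{s},1,\ol{w-s-1})+(w-s)Z_\calA^\eff(\ol{s})Z_\calA^\eff(\ol{w-s})+(s+2)Z_\calA^\eff(\ol{s+1})Z_\calA^\eff(\ol{w-s-1})
\]
for $1\le s\le w-2$. Let $C:=Z_\calA^\eff(\ol{1})Z_\calA^\eff(\ol{w-1})$.

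First we remove $x_{w-1}$ using \eqref{equ:KZrel3SMES(1,-1)}, i.e.\ $2x_{w-1}=C-\sum_{s=1}^{w-2}x_s$. This gives
\[
2\sum_{a=2}^{w-1}2^a x_a=\sum_{a=2}^{w-2}(2^{a+1}-2^w)x_a-2^w x_1+2^w C .
\]
Next we substitute \eqref{equ:KZrel3FES(-1,1)} for every $x_a$ with $1\le a\le w-2$. The coefficient sums over $a$ are exactly the binomial sums already evaluated in the proof of Proposition~\ref{prop:doubleZetaProperties}(iii), together with \eqref{equ:CoefSumId}, plus the contribution from $a=1$. The $C$-terms collect as
\[
-\tfrac12\sum_a (-1)^a(2^{a+1}-2^w)\binom{w-1}{a}+\dots
\]
and can be evaluated by $(1-2)^{w-1}$ and $(1-1)^{w-1}$ expansions. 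The result is an identity of the form
\[
2\sum_{a=2}^{w-1}2^a x_a=\sum_{s=1}^{w-2}\big(2(-1)^s+(4-2^w)s+2+(2^{w-1}-2)w\big)\Phi_s+\gamma C
\]
for an explicit constant $\gamma$. Here the $a=1$ term is handled with the example \eqref{equ::KZrel3SMES(1,-1)eg}.

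Then we symmetrize under $s\to w-s-1$, using the reversal relation $\zeta_\calA(\ol{s},1,\ol{w-s-1})=\zeta_\calA(\ol{w-s-1},1,\ol{s})$, which holds because reversal of finite Euler sums carries the sign $(-1)^{w}\eps_1\eps_2\eps_3=1$. Exactly as in Proposition~\ref{prop:doubleZetaProperties}, the triple part then reduces to a constant multiple of $\sum_{s=1}^{w-2}\zeta_\calA(\ol{s},1,\ol{w-s-1})$ plus a multiple of $\sum_s(-1)^s\zeta_\calA(\ol{s},1,\ol{w-s-1})$, although the alternating parity term no longer cancels automatically. For the product terms we use $Z_\calA^\eff(\ol{m})=(2^{1-m}-1)Z_\calA^\eff(m)$, which vanishes for even $m>1$. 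Since $w$ is even, $Z_\calA^\eff(\ol{s})Z_\calA^\eff(\ol{w-s})$ survives only for odd $s$, and $Z_\calA^\eff(\ol{s+1})Z_\calA^\eff(\ol{w-s-1})$ survives only for even $s$ or at $s+1=1$. We also need $Z_\calA^\eff(\ol{1})=\tq_2$, which is not zero, so all terms containing $\tq_2$, namely $C$ and the $s=w-2$ term, must cancel against $\gamma C$.

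The main obstacle is to establish the alternating analogue of Lemma~\ref{lem:triple=0}: the relevant combination $\sum_s\zeta_\calA(\ol{s},1,\ol{w-s-1})$, together with its $(-1)^s$-weighted version, is expressible in products of single values. We would try two routes. The first is a finite stuffle argument: expand $\sum_{a+b+c=w}\zeta_\calA(\ol{a},b)\zeta_\calA(\ol{c})$ and the analogous products, then combine with the known vanishing/reduction of finite double Euler sums of odd weight, or with the Saito--Wakabayashi type sum formulas extended to level two.

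The second route, if the first fails, uses Conjecture~\ref{conj:KanekoZagierAltVersion} in the direction available here. Under Hypothesis~\ref{Hyp:Key} the identity may be verified on the real side. Map everything by $f_\ES$ and check that
\[
\sum_a 2^a\zeta(\ol{a},w-a)\equiv0 \pmod{\zeta(2)}
\]
using a real weighted sum formula such as \eqref{equ:WtSumES(a,barb)AND(a,barb)}-type identities for even weight. Then pull the identity back, since effective values are defined by their images. This route reduces the statement to a classical even-weight weighted sum formula modulo $\zeta(2)$, and that formula would be checked via Theorem~\ref{thm:effDBES(-1,1)} and the shuffle-derived relation \eqref{equ:KZrel2SMES(-1,1)version}.
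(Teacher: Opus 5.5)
\textbf{There is a genuine gap.} Your skeleton is the paper's: eliminate $x_{w-1}$ via \eqref{equ:KZrel3SMES(1,-1)}, substitute \eqref{equ:KZrel3FES(-1,1)} and \eqref{equ::KZrel3SMES(1,-1)eg}, then symmetrize with reversal. But you never compute the coefficients. Instead you import $2(-1)^s+(4-2^w)s+2+(2^{w-1}-2)w$ from Proposition~\ref{prop:doubleZetaProperties}(iii) and infer that a multiple of $\sum_s\zeta_\calA(\ol{s},1,\ol{w-s-1})$ survives. You then make the proof depend on an unproved level-two analogue of Lemma~\ref{lem:triple=0}, and neither of your routes closes that step. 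The first is only a sketch. The second is invalid: Hypothesis~\ref{Hyp:Key} only says $f_\ES$ is well defined, while the statement is an identity in $\calA$ between explicitly defined elements. Deducing it from $f_\ES(\cdot)\equiv 0 \pmod{\zeta(2)}$ would need $f_\ES$ to be injective, i.e.\ the full Conjecture~\ref{conj:KanekoZagierAltVersion}. Your elimination display also mixes normalizations. It should read $4\sum_{a=2}^{w-1}2^ax_a=\sum_{a=2}^{w-2}(2^{a+2}-2^w)x_a-2^wx_1+2^wC$.

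\textbf{The obstacle comes only from the wrong coefficient.} We have $\sum_{a=2}^{s}(2^{a+2}-2^w)(-1)^a\binom{s}{a}=4(-1)^s+8s-4-(s-1)2^w$ and $\sum_{a=2}^{w-2}(2^{a+2}-2^w)(-1)^a\binom{w-1}{a}=8w-16+(3-w)2^w$. So the terms $2\le a\le w-2$ give $\Phi_s$ the coefficient $4(-1)^s+8s-4w+4+2^{w-1}(w-2s-1)$. The term $-2^wx_1$ contributes $2^{w-1}(2s-w+1)$. The powers of $2$ cancel, and
\[
4\sum_{a=2}^{w-1}2^a x_a=-(4w-8)C+\sum_{s=1}^{w-2}\big(4(-1)^s+8s-4w+4\big)\Phi_s .
\]
Since $w$ is even, this coefficient is odd under $s\mapsto w-1-s$. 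The reversal relation you quote therefore kills the triple part outright. In particular the $(-1)^s$-weighted sum does cancel automatically, contrary to your claim, and no analogue of Lemma~\ref{lem:triple=0} is needed.

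For the products, shift $s+1\to s$ in the second one and use $Z_\calA^\eff(\ol{s})=0$ for even $s\ge 2$. The odd $s$ with $3\le s\le w-3$ give $(w+1)(8s-4w)Z_\calA^\eff(\ol{s})Z_\calA^\eff(\ol{w-s})$, which is odd under $s\mapsto w-s$ and so sums to zero. The $\tq_2$-terms come from $s=1$ in the first product (you list only the other one) and $s=w-2$ in the second. They total $(4w-8)C$, which cancels $-(4w-8)C$. This is exactly how the paper finishes.
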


\begin{rem}
The weighted sum formula for double Euler sums corresponding to \eqref{equ:ConjWtSumFormulaFDESbarbar} is given in the proof of \cite[Theorem 4.4]{BCJXXZhao2020c}. The odd weight case is given by
\end{rem}

\begin{proof}
By definition, setting $r=w-s$, we get
\begin{align*}
&\,4\sum_{a=2}^{w-1} 2^a Z_\calA^\eff(\ol{a},w-a)=
\sum_{a=2}^{w-2} (2^{a+2}-2^w)  Z_\calA^\eff(\ol{a},w-a)+2^{w}Z_\calA^\eff(\ol{1})Z_\calA^\eff(\ol{w-1})  -2^w Z_\calA^\eff(\ol{1},w-1)\\
=&\, -Z_\calA^\eff(\ol{1})Z_\calA^\eff(\ol{w-1}) \left(2^{w-1}(w-3)+\sum_{a=2}^{w-2}(2^{a+2}-2^{w}) \frac{(-1)^a}2 \binom{w-1}{a}\right) \\
&\, + \sum_{a=2}^{w-2} (2^{a+2}-2^w) (-1)^a \sum_{s=1}^{w-2} \left[ \binom{s}{a}-\frac12\binom{w-1}{a}\right]
\left\{\aligned
&\zeta_\calA(\ol{s},1,\ol{r-1})+rZ_\calA^\eff(\ol{s})Z_\calA^\eff(\ol{r})\\
&+(s+2)Z_\calA^\eff(\ol{s+1})Z_\calA^\eff(\ol{r-1})
\endaligned \right\} \\
&\, +2^w\sum_{s=1}^{w-2} \left( s-\frac{w-1}2 \right)
\left\{\aligned
&\zeta_\calA(\ol{s},1,\ol{r-1})+rZ_\calA^\eff(\ol{s})Z_\calA^\eff(\ol{r})\\
&+(s+2)Z_\calA^\eff(\ol{s+1})Z_\calA^\eff(\ol{r-1})\endaligned \right\}.
\end{align*}
Since
\begin{align*}
\sum_{a=2}^{w-2} (2^{a+2}-2^{w})(-1)^a \binom{w-1}{a}= 8w-16+(3-w)2^w
\end{align*}
and
\begin{align*}
\sum_{a=2}^{s} (2^{a+2}-2^{w})(-1)^a \binom{s}{a}= 4(-1)^s+8s-4-(s-1)2^w,
\end{align*}
we get
\begin{align*}
&\,4\sum_{a=2}^{w-1} 2^a Z_\calA^\eff(\ol{a},w-a) \\
=&\, -Z_\calA^\eff(\ol{1})Z_\calA^\eff(\ol{w-1}) \Big(4w-8\Big) \\
&\, +   \sum_{s=1}^{w-2} \Big(4(-1)^s+8s-4w+4+(w-2s-1)2^{w-1} \Big)
\left\{\aligned
&\zeta_\calA(\ol{s},1,\ol{r-1})+rZ_\calA^\eff(\ol{s})Z_\calA^\eff(\ol{r})\\
&+(s+2)Z_\calA^\eff(\ol{s+1})Z_\calA^\eff(\ol{r-1})
\endaligned \right\} \\
&\, +2^w\sum_{s=1}^{w-2} \left( s-\frac{w-1}2 \right)
\left\{\aligned
&\zeta_\calA(\ol{s},1,\ol{r-1})+rZ_\calA^\eff(\ol{s})Z_\calA^\eff(\ol{r})\\
&+(s+2)Z_\calA^\eff(\ol{s+1})Z_\calA^\eff(\ol{r-1})\endaligned \right\}\\
=&\, -Z_\calA^\eff(\ol{1})Z_\calA^\eff(\ol{w-1}) \Big(4w-8\Big)
  +   \sum_{s=1}^{w-2} \Big(4(-1)^s+8s-4w+4 \Big)
\zeta_\calA(\ol{s},1,\ol{r-1}) \\
&\, +   \sum_{s=1}^{w-2} \Big(4(-1)^s+8s-4w+4 \Big) rZ_\calA^\eff(\ol{s})Z_\calA^\eff(\ol{r})\\
& +  \sum_{s=2}^{w-1} \Big(-4(-1)^s+8s-4w-4\Big) (s+1)Z_\calA^\eff(\ol{s})Z_\calA^\eff(\ol{r})\\
=&\,  \sum_{s=1}^{w-2} \Big(4(-1)^s+8s-4w+4 \Big) \zeta_\calA(\ol{s},1,\ol{r-1})
+  (w+1) \sum_{s=2}^{w-2} \Big(8s-4w\Big)  Z_\calA^\eff(\ol{s})Z_\calA^\eff(\ol{r})
\end{align*}
since we $Z_\calA^\eff(\ol{s})=$ for all even $s$. Now, by the substitution $s\to w-s-1$  we see that
\begin{equation*}
 \sum_{s=1}^{w-2} \Big(4(-1)^s+8s-4w+4 \Big) \zeta_\calA(\ol{s},1,\ol{r-1}) = 0,
\end{equation*}
and by the  substitution $s\to w-s$ we get
\begin{equation*}
\sum_{s=2}^{w-2} \Big(8s-4w\Big)Z_\calA^\eff(\ol{s})Z_\calA^\eff(\ol{r})=0.
\end{equation*}
We have now completed the proof of the proposition.
\end{proof}

\subsection{Effective Euler sums $Z_\calA^\eff(\ol{a},\ol{b})$}
We consider the last type of effective Euler sums $Z_\calA^\eff(\ol{a},\ol{b})$ in this section.

\begin{thm}\label{thm:KZeffDBES(-1,-1)}
Assume $b\ge 2$ and $w=a+b\ge 4$ is even. Then, modulo $\zeta(2)$ we have
\begin{align}\label{equ:KZrel1SMES(-1,-1)version}
&\,  \zeta_\sha^\Sy(\ol{a},\ol{1},b-1)
\equiv   -(b-1)\zeta(\ol{a})\zeta(b)-\zeta(\ol{a})\zeta(\ol{b})
 +(-1)^{a}\sum_{s=1}^{w-1}\binom{s}{a}\zeta_*(\ol{s},\ol{w-s}),\\
&\sum_{s=1}^{w-1} \zeta(\ol{s},\ol{w-s})\equiv \zeta(\ol{1},w-1).\label{equ:KZrel2SMES(-1,-1)version}
\end{align}
\end{thm}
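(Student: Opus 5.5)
Both identities will follow the pattern of Theorems~\ref{thm:KZeffDBZs} and \ref{thm:effDBES(-1,1)}.

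\textbf{The second identity \eqref{equ:KZrel2SMES(-1,-1)version}.} This should come from a single shuffle product. With $\tc=dt/(1+t)$, the word $\ta^{s-1}\tc\,\ta^{w-s-1}\tc$ encodes $\zeta(\ol{s},w-s)$ with the sign pattern $(\ol{s},\ol{w-s})$ up to the usual translation of letters into signs. I will use the shuffle
\begin{equation*}
\tb\sha\ta^{w-2}\tc=\ta^{w-2}\tc\tb+\sum_{s=1}^{w-1}\ta^{s-1}\tb\,\ta^{w-s-1}\tc .
\end{equation*}
After converting letters to signs, the left-hand side is $\zeta_\sha(1)\zeta(\ol{w-1})$ up to sign, which vanishes under regularization at $T=0$. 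The first term on the right is the depth-two value $\zeta(\ol{w-1},\ol{1})$-type term. Together with the factor $\zeta(\ol 1)\zeta(\ol{w-1})$, it should reduce by Prop.~\ref{prop:DBESreduce}-type reasoning, or by the stuffle relation, to $\zeta(\ol{1},w-1)$ modulo $\zeta(2)$.

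I first need to fix exactly which word gives which signs; if the naive choice does not produce exactly $\zeta(\ol{1},w-1)$, I will instead shuffle $\tc\sha\ta^{w-2}\tb$. In either case the difference between $\zeta_*$ and $\zeta_\sha$ is absorbed modulo $\zeta(2)$.

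\textbf{The first identity \eqref{equ:KZrel1SMES(-1,-1)version}.} Here I will specialize Theorem~\ref{thm:SMZV(a,1,b-1;e1,e2,e3)} with $(\eps_1,\eps_2,\eps_3)=(-1,-1,1)$, so that $\eps_1\eps_2=1$. The terms with factor $\eps_1+1$ and $\eps_1\eps_2-1$ then vanish. The remaining ones are:
\begin{itemize}
\item $2\zeta(a+1)\zeta_*(b-1)$, which is $\equiv 0$ modulo $\zeta(2)$ because exactly one of $a+1$, $b-1$ is even;
\item $-2[(b-1)\zeta(\ol a)\zeta(b)+\zeta(\ol a)\zeta(\ol b)]$, which already matches the claimed product terms;
\item the binomial sums $-2(-1)^a\sum_s[\binom{s-1}{b-2}\zeta_*(s,\ol{w-s})-\binom{s-1}{a-1}\zeta_*(\ol s,\ol{w-s})]$.
\end{itemize}
Since $\zeta_\sha^\Sy\equiv\zeta_\ast^\Sy$, this gives an expression for $\zeta_\sha^\Sy(\ol a,\ol1,b-1)$.

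To eliminate the $\zeta_*(s,\ol{w-s})$ terms, I will use the reversal symmetry $\zeta^\Sy(\ol a,\ol1,b-1)=\pm\zeta^\Sy(b-1,\ol1,\ol a)$, with the sign $\prod\eps_j(-1)^{s_j}$, which is $+1$ here since $w$ is even. I will apply the same theorem to $(b-1,1,a;1,-1,-1)$, so the roles of the sums swap, and then average or subtract, as was done with the substitution $(a,b)\to(b-1,a+1)$ before.

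\textbf{Expected main obstacle.} Unlike Cor.~\ref{cor:SMZV(a,1,b-1;eps,1,eps)}, the two binomial sums involve different sign patterns, $(1,-1)$ and $(-1,-1)$. So the shuffle product $\zeta(\ol a)\zeta(\ol b)$ cannot simply merge them. My first attempt is to use the shuffle formula
\begin{equation*}
\zeta(\ol a)\zeta(b)=\sum_s\Big[\binom{s-1}{a-1}\zeta(\ol s,\ol{w-s})+\binom{s-1}{b-1}\zeta(s,\ol{w-s})\Big],
\end{equation*}
with signs to be checked. This converts the $\binom{s-1}{b-2}$ sum, via $\binom{s-1}{b-2}+\binom{s-1}{b-1}=\binom{s}{b-1}$, into the pure $(\ol s,\ol{w-s})$ sum with coefficient $\binom{s}{a}$ after reindexing, at the cost of producing the extra $\zeta(\ol a)\zeta(b)$ term that appears with coefficient $b-1$. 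Tracking these product terms exactly is where the computation is most error-prone.
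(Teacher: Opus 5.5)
\textbf{The gap is in the first identity.} With $(\eps_1,\eps_2,\eps_3)=(-1,-1,1)$, the surviving term $(\eps_1\eps_2+1)\zeta(a+1;\eps_2)\zeta_*(b-1;\eps_3)$ is $2\zeta(\ol{a+1})\zeta_*(b-1)$. Note the bar, since $\eps_2=-1$. Because $(a+1)+(b-1)=w$ is even, $a+1$ and $b-1$ have the \emph{same} parity, not opposite parity. So this product is $\equiv 0$ only for odd $a$. For even $a$ it is a product of two odd single values and cannot be dropped; for $(a,b)=(2,4)$ it equals $-\frac34\zeta(3)^2$.

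Your mechanism for removing the mixed sum $\sum_s\binom{s-1}{b-2}\zeta_*(s,\ol{w-s})$ also fails. Applying Theorem~\ref{thm:SMZV(a,1,b-1;e1,e2,e3)} to the reversed string $(b-1,1,a;1,-1,-1)$ returns literally the same congruence. The product now comes from the $(\eps_1+1)$-term, and the two binomial sums reappear with the same binomials and signs. So averaging or subtracting gives nothing. The shuffle of $\zeta(\ol a)\zeta(b)$ also does not help, because it carries the wrong binomial $\binom{s-1}{b-1}$ on the words of type $(s,\ol{w-s})$. After Pascal you are left with $\sum_s\binom{s}{b-1}\zeta(s,\ol{w-s})$, which is still of mixed sign type; reindexing $s\mapsto w-s$ does not change the sign pattern. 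Using it would also spoil the coefficient $b-1$ of $\zeta(\ol a)\zeta(b)$, which is already correct.

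\textbf{The missing idea.} The product you dropped is exactly the tool that removes the mixed sum. Since $\zeta(\ol{a+1})\zeta_*(b-1)\equiv 0$ for odd $a$, you may write it as $(-1)^a\zeta(\ol{a+1})\zeta_*(b-1)$ for every $a$. The shuffle $\ta^{b-2}\tb\sha\ta^{a}\tc$ then gives
\begin{equation*}
\zeta_*(b-1)\zeta(\ol{a+1})=\sum_{s=1}^{w-1}\Big[\binom{s-1}{b-2}\zeta_*(s,\ol{w-s})+\binom{s-1}{a}\zeta_*(\ol{s},\ol{w-s})\Big].
\end{equation*}
Substituting this, the $\binom{s-1}{b-2}$-sum cancels identically. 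Then $\binom{s-1}{a}+\binom{s-1}{a-1}=\binom{s}{a}$ yields the stated right-hand side, and no reversal is needed.

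\textbf{The second identity.} Your fallback is the right choice. The words $\ta^{s-1}\tb\,\ta^{w-s-1}\tc$ encode $\zeta(s,\ol{w-s})$, so $\tb\sha\ta^{w-2}\tc$ produces the wrong sum. By contrast, $\tc\sha\ta^{w-2}\tb$ is the case $a=0$ of the shuffle above and gives $\zeta(\ol1)\zeta(w-1)=\zeta(w-1,\ol1)+\sum_{s=1}^{w-1}\zeta(\ol s,\ol{w-s})$. Its left side does not vanish. Instead it is removed by the stuffle $\zeta(\ol1)\zeta(w-1)=\zeta(\ol1,w-1)+\zeta(w-1,\ol1)+\zeta(\ol w)$ together with $\zeta(\ol w)\equiv 0$. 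This is exactly the paper's argument for that part.
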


\begin{proof}
Taking $\eps_1=\eps_2=-1$ and $\eps_3=1$ in  Theorem~\ref{thm:SMZV(a,1,b-1;e1,e2,e3)}, we get
\begin{align*}
&\,  \zeta_\sha^\Sy(\ol{a},\ol{1},b-1)
\equiv   -(b-1)\zeta(\ol{a})\zeta(b)-\zeta(\ol{a})\zeta(\ol{b})+ (-1)^a\zeta(\ol{a+1})\zeta_*(b-1) \\
&\,-(-1)^{a}\sum_{s=1}^{w-1}\left[ \binom{s-1}{b-2} \zeta_*(s,\ol{w-s}) -\binom{s-1}{a-1}\zeta_*(\ol{s},\ol{w-s})\right] .
\end{align*}
Note that we multiplied by $(-1)^a$ in front of $\zeta(\ol{a+1})\zeta_*(b-1)$ since this term could be nonzero only when $a$ is even. By shuffle product formula (see \cite[(3)]{GanglKaZa2006} for the corresponding result for double zetas)
\begin{align}\label{equ:shuffle_b_bara}
\zeta_*(b-1)\zeta(\ol{a+1})=&\, \sum_{s=1}^{w-1} \bigg[\binom{s-1}{b-2}\zeta_*(s,\ol{w-s}) +\binom{s-1}{a}\zeta_*(\ol{s},\ol{w-s})\bigg]
\end{align}
which follows from
\begin{align*}
\ta^{b-2}\tb \sha \ta^a\tc =&\, \sum_{s=1}^{w-1} \bigg[\binom{s-1}{b-2}\ta^{s-1}\tb \ta^{w-s-1}\tc +\binom{s-1}{a} \ta^{s-1}\tc\ta^{w-s-1}\tb\bigg]  .
\end{align*}
These yields \eqref{equ:KZrel1SMES(-1,-1)version} quickly. Finally, \eqref{equ:KZrel2SMES(-1,-1)version} is the special case of \eqref{equ:shuffle_b_bara} when $a=0$ since
\begin{align*}
\zeta(\ol{1})\zeta(w-1)-\zeta(w-1,\ol{1})\equiv \zeta(\ol{1},w-1) \pmod{\zeta(2)}.
\end{align*}
This concludes the proof of the theorem.
\end{proof}

\begin{cor}\label{cor:KZeffDBES(-1,-1)}
Suppose $a,b$ are positive integers with even $w=a+b\ge 4$. If $1\le a\le w-2$ then we have
\begin{align} \label{equ:KZrel3SMES(-1,-1)}
\zeta(\ol{a},\ol{b})\equiv &\, -(-1)^a \binom{w-1}{a}\zeta(\ol{1},w-1)\\
&\,+ (-1)^{a}\sum_{s=1}^{w-2} \left[ \binom{s}{a}-\binom{w-1}{a}\right]
\left\{\aligned
&\zeta_\ast^\Sy(\ol{s},\ol{1},w-s-1)+\zeta(\ol{s})\zeta(\ol{w-s}) \\
&+(w-s-1)\zeta(\ol{s})\zeta(w-s)\endaligned \right\}, \notag\\     
Z_\calA^\eff(\ol{a},\ol{b})=&\,-(-1)^a \binom{w-1}{a} Z_\calA^\eff(\ol{1},w-1)  \notag\\
&\,+ (-1)^{a}\sum_{s=1}^{w-2} \left[ \binom{s}{a}-\binom{w-1}{a}\right]
\left\{\aligned
&\zeta_\calA(\ol{s},\ol{1},w-s-1)+Z_\calA^\eff(\ol{s})Z_\calA^\eff(\ol{w-s}) \\
&+(w-s-1)Z_\calA^\eff(\ol{s})Z_\calA^\eff(w-s)\endaligned \right\}, \notag
\end{align}
where $Z_\calA^\eff(\ol{1},w-1)$ is given by \eqref{equ::KZrel3SMES(1,-1)eg}. Further,
\begin{align}
Z_\calA^\eff(\ol{w-1},\ol{1})=Z_\calA^\eff(\ol{1},w-1)-\sum_{s=1}^{w-2} Z_\calA^\eff(\ol{s},\ol{w-s}) .  \label{equ:ConjSumFormulaFDESbarbar}
\end{align}
\end{cor}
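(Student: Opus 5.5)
This is the analogue of Corollaries~\ref{cor:KZeffDBZs} and \ref{cor:KZeffDBES(-1,1)}, so the plan is to feed Theorem~\ref{thm:KZeffDBES(-1,-1)} into Lemma~\ref{lem:linAlg} with suitable constants, and then transport the resulting classical identity to the finite side through $f_\ES$.

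First we fix the data for the lemma. We take $x_s=\zeta_*(\ol{s},\ol{w-s})$ for $1\le s\le w-1$, and
\begin{equation*}
\psi_a=\zeta_\sha^\Sy(\ol{a},\ol{1},w-a-1)+(w-a-1)\zeta(\ol{a})\zeta(w-a)+\zeta(\ol{a})\zeta(\ol{w-a}),\qquad 1\le a\le w-2 .
\end{equation*}
Equation \eqref{equ:KZrel1SMES(-1,-1)version} then says exactly $\sum_{s=a}^{w-1}\binom{s}{a}x_s=(-1)^a\psi_a$, which is \eqref{equ:FormalRel1}. Equation \eqref{equ:KZrel2SMES(-1,-1)version} can be rewritten as $-\sum_{s=1}^{w-2}x_s=x_{w-1}-\zeta(\ol{1},w-1)$. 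This is \eqref{equ:FormalRel2} with $A=1$ and $B=-\zeta(\ol{1},w-1)$.

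Plugging these into \eqref{equ:FormalRel3} produces the coefficient $\binom{s}{a}-\binom{w-1}{a}$, which explains why the factor $\frac12$ from the earlier cases disappears. It also produces the extra term $(-1)^a B\binom{w-1}{a}=-(-1)^a\binom{w-1}{a}\zeta(\ol{1},w-1)$, which is the first displayed formula. As the paper remarks after Conjecture~\ref{conj:KanekoZagierAltVersion}, $\zeta_\sha^\Sy$ and $\zeta_\ast^\Sy$ agree modulo $\zeta(2)$, so the formula may be written with $\zeta_\ast^\Sy$. In the same way, \eqref{equ:FormalRel4} gives $x_{w-1}=\zeta(\ol{1},w-1)-\sum_{s=1}^{w-2}x_s$, which is the classical form of \eqref{equ:ConjSumFormulaFDESbarbar}.

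For the finite statements we proceed as in Corollary~\ref{cor:KZeffDBES(-1,1)}. We take the right-hand side of the finite formula as the definition of $Z_\calA^\eff(\ol{a},\ol{b})$, and then check that $f_\ES$ sends it to $\zeta(\ol{a},\ol{b})$ modulo $\zeta(2)$. The check has three ingredients, each applied termwise:
\begin{itemize}
\item Hypothesis~\ref{Hyp:Key} sends $\zeta_\calA(\ol{s},\ol{1},w-s-1)$ to $\zeta_\ast^\Sy(\ol{s},\ol{1},w-s-1)$.
\item The depth-one results of Section~\ref{sec:depthOne} send $Z_\calA^\eff(\ol{s})$ and $Z_\calA^\eff(w-s)$ to the corresponding zeta values. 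The product terms need only this, together with the vanishing of $Z_\calA^\eff(1)$ and of the even-argument cases.
\item \eqref{equ::KZrel3SMES(1,-1)eg}, already established, sends $Z_\calA^\eff(\ol{1},w-1)$ to $\zeta(\ol{1},w-1)$.
\end{itemize}
Since $f_\ES$ is linear, the image of the right-hand side equals the right-hand side of the classical formula, and hence $\zeta(\ol{a},\ol{b})$. Finally, the converse part of Lemma~\ref{lem:linAlg} shows that the defined $Z_\calA^\eff(\ol{s},\ol{w-s})$ satisfy the finite analogue of \eqref{equ:FormalRel2}. This yields \eqref{equ:ConjSumFormulaFDESbarbar} with $Z_\calA^\eff(\ol{w-1},\ol{1})$ defined by that relation, and applying $f_\ES$ confirms it maps to $\zeta(\ol{w-1},\ol{1})$.

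The main obstacle is the bookkeeping in the first step. We must verify that the sign and index conventions of \eqref{equ:KZrel1SMES(-1,-1)version} match \eqref{equ:FormalRel1} exactly, with $(-1)^a$ on the correct side. We must also confirm that the stray term $(-1)^a\zeta(\ol{a+1})\zeta_*(b-1)$ was fully absorbed by the shuffle formula \eqref{equ:shuffle_b_bara}, so that $\psi_a$ contains only the two product terms above. A related point is the case $a=w-2$, i.e.\ $b=2$: there $\zeta_*(b-1)=\zeta_*(1)$ is regularized, and one must check that no regularization polynomial survives modulo $\zeta(2)$. If a mismatch appears, the first thing to try is recomputing $\psi_a$ directly from Theorem~\ref{thm:SMZV(a,1,b-1;e1,e2,e3)} with $(\eps_1,\eps_2,\eps_3)=(-1,-1,1)$.
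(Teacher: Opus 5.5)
Your proposal is correct and follows the paper's proof: the paper applies Lemma~\ref{lem:linAlg} to the two relations of Theorem~\ref{thm:KZeffDBES(-1,-1)} with the same data, namely $A=1$, $B=-\zeta(\ol{1},w-1)$, $x_s=\zeta(\ol{s},\ol{w-s})$ and the same $\psi_a$, and it obtains the finite formulas by transporting the classical ones through $f_\ES$, with $Z_\calA^\eff(\ol{1},w-1)$ taken from \eqref{equ::KZrel3SMES(1,-1)eg}. Your bookkeeping concern is already settled in the proof of Theorem~\ref{thm:KZeffDBES(-1,-1)}, where the term $(-1)^a\zeta(\ol{a+1})\zeta_*(b-1)$ is absorbed via \eqref{equ:shuffle_b_bara} together with $\binom{s-1}{a}+\binom{s-1}{a-1}=\binom{s}{a}$.
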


\begin{rem}
The sum formula for double Euler sums corresponding to \eqref{equ:ConjSumFormulaFDESbarbar} is given by  \cite[Theorem 4.2]{BCJXXZhao2020c}.
\end{rem}

\begin{proof}
The proof is again similar to that of Cor.~\ref{cor:KZeffDBZs}. We only need to prove \eqref{equ:KZrel3SMES(-1,-1)}  which follows from Lemma~\ref{lem:linAlg} by taking $A=1$, $B=-\zeta(\ol{1},w-1)$,
$x_s=\zeta(\ol{s},\ol{w-s})$  and
\begin{align*}
\psi_a=\zeta_\ast^\Sy(\ol{a},\ol{1},b-1) +\zeta(\ol{a})\zeta(\ol{b}) +(b-1)\zeta(\ol{a})\zeta(b)
  \quad \text{for } 1\le a\le w-2,
\end{align*}
where $b=w-a$.
\end{proof}

The odd weight version of the following proposition is given by Prop.~\ref{prop:SumFormula}.

\begin{prop}\label{prop:SumFormulaBarab}   \emph{(Sum formulas})
For all positive even integer $w\ge 3$, setting $v=w-1$ then we have
\begin{align}
  \sum_{\substack{a\ge 2,b\ge 1\\ a+b=w}}  Z_\calA^\eff(\ol{a},\ol{b})=&\,  Z_\calA^\eff(\ol{1},v)- Z_\calA^\eff(\ol{1},\ol{v}), \label{EvenWtSumBaraBarb}
\end{align}
\end{prop}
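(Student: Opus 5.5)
The identity is a rearrangement of \eqref{equ:ConjSumFormulaFDESbarbar} in Corollary~\ref{cor:KZeffDBES(-1,-1)}, so the plan is to start from that equation rather than from the explicit formula for $Z_\calA^\eff(\ol{a},\ol{b})$. With $v=w-1$, that equation gives
\begin{equation*}
\sum_{s=1}^{w-1} Z_\calA^\eff(\ol{s},\ol{w-s}) = Z_\calA^\eff(\ol{1},v),
\end{equation*}
where the $s=w-1$ term is $Z_\calA^\eff(\ol{v},\ol{1})$. The sum in \eqref{EvenWtSumBaraBarb} runs over $a\ge 2$, so it equals the left-hand side above minus the single term $Z_\calA^\eff(\ol{1},\ol{v})$. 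This yields exactly $Z_\calA^\eff(\ol{1},v)-Z_\calA^\eff(\ol{1},\ol{v})$.

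The substantive point is that \eqref{equ:ConjSumFormulaFDESbarbar} really holds for the effective values. I would verify it through the converse half of Lemma~\ref{lem:linAlg}. The values $Z_\calA^\eff(\ol{a},\ol{b})$ for $1\le a\le w-2$ are defined by formula \eqref{equ:FormalRel3}, applied on the finite side with $A=1$, $B=-Z_\calA^\eff(\ol{1},w-1)$, and
\begin{equation*}
\psi_s=\zeta_\calA(\ol{s},\ol{1},w-s-1)+Z_\calA^\eff(\ol{s})Z_\calA^\eff(\ol{w-s})+(w-s-1)Z_\calA^\eff(\ol{s})Z_\calA^\eff(w-s).
\end{equation*}
We then take $x_{w-1}:=Z_\calA^\eff(\ol{w-1},\ol{1})$ to be given by \eqref{equ:FormalRel4}, namely $-(B+\sum_{s\le w-2}x_s)$. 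That is precisely \eqref{equ:ConjSumFormulaFDESbarbar}, which is equivalent to \eqref{equ:FormalRel2} with $A=1$.

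To confirm this is consistent with $f_\ES$, apply $f_\ES$ to both sides. By Hypothesis~\ref{Hyp:Key} and the classical relation \eqref{equ:KZrel2SMES(-1,-1)version} of Theorem~\ref{thm:KZeffDBES(-1,-1)}, the images satisfy the same relation $\sum_{s=1}^{w-1}\zeta(\ol{s},\ol{w-s})\equiv\zeta(\ol{1},w-1)$. This is the correct effective counterpart.

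The main obstacle is the converse direction of Lemma~\ref{lem:linAlg}, which the paper leaves as an exercise. One must check that the $x_a$ defined by \eqref{equ:FormalRel3} together with \eqref{equ:FormalRel4} actually satisfy \eqref{equ:FormalRel1}. For the present statement, however, only \eqref{equ:FormalRel2} is needed, and that is immediate from \eqref{equ:FormalRel4}. So the proof reduces to citing Corollary~\ref{cor:KZeffDBES(-1,-1)} and subtracting the $a=1$ term.
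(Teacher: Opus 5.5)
Your proposal is correct and matches the paper's proof, which likewise obtains the statement by splitting off the $a=1$ term $Z_\calA^\eff(\ol{1},\ol{v})$ from $\sum_{s=1}^{w-1}Z_\calA^\eff(\ol{s},\ol{w-s})=Z_\calA^\eff(\ol{1},v)$, i.e.\ from \eqref{equ:ConjSumFormulaFDESbarbar}. Your extra remark is consistent with the paper but not needed beyond citing Corollary~\ref{cor:KZeffDBES(-1,-1)}: that relation is just \eqref{equ:FormalRel4} with $A=1$ and $B=-Z_\calA^\eff(\ol{1},w-1)$, so the unproved converse half of Lemma~\ref{lem:linAlg} plays no role.
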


\begin{proof}
The follows from \eqref{equ:ConjSumFormulaFDESbarbar} immediately.
\end{proof}

It is natural to expect the following relations to hold. To prove these, however, one first needs the triple Euler sum analog of \cite[Cor.~5.9]{BachmannRisan2026}, which we have verified numerically only. Further, \eqref{EvenWtSumBarab} follows easily from the $a=1$ case of the stuffle relation \eqref{equ:ConjStuffFDESbarbar}.

\begin{conj}
Let $w\ge 4$ be an even integer. Then, for all $1\le a\le w-1$, we have
\begin{align}
  \sum_{\substack{a\ge 2,b\ge 1\\ a+b=w}}  Z_\calA^\eff(\ol{a},b)=&\,  Z_\calA^\eff(\ol{v},\ol{1})+ Z_\calA^\eff(\ol{1},\ol{v})-Z_\calA^\eff(\ol{v},1)-Z_\calA^\eff(\ol{1},v), \label{EvenWtSumBarab}\\
  \sum_{\substack{a\ge 2,b\ge 1\\ a+b=w}}  Z_\calA^\eff(a,\ol{b})=&\, Z_\calA^\eff(\ol{v},1)-Z_\calA^\eff(\ol{v},\ol{1}),\\
Z_\calA^\eff(\ol{a})Z_\calA^\eff(\ol{w-a})=&\, Z_\calA^\eff(\ol{a},\ol{w-a})+ Z_\calA^\eff(\ol{w-a},\ol{a}),\label{equ:ConjStuffFDESbarbar}\\
\sum_{\substack{a\ge 2,b\ge 1\\ a+b=w}}  2^a \Big(Z_\calA^\eff(\ol{a},\ol{b})+ Z_\calA^\eff(a,\ol{b})\Big)=&\, 0. \label{equ:EvenWtSum(a,barb)AND(a,barb)}
\end{align}
\end{conj}

\section{Concluding Remarks}
In the main body of this paper, we have concentrated on constructing effective single and double effective Euler sums that correspond to the classical one under Conjecture~\ref{conj:KanekoZagierAltVersion}. At higher depths, the number of possible sign patterns and compositions grows rapidly, and the space of relations becomes more complicated. Moreover, the two ways of regularization will enter the picture in an essential way, leading to two models of ``effective'' Euler sums. Nevertheless, the low-depth formulas indicate that the finite/classical correspondence contains considerably more explicit information than an abstract isomorphism alone would suggest. In particular, one may hope to construct a compatible family of effective representatives that respects products, dualities, and the various natural filtrations on the two sides.

We emphasize that the purpose of the present work is therefore not merely to verify isolated instances of a conjectural correspondence. Rather, we seek to make the correspondence itself explicit. The resulting formulas provide concrete objects on the finite side attached to classical Euler sums (and their regularized values for divergent ones) and thereby turn an abstract conjectural isomorphism into a calculable procedure. Therefore, it would be very interesting to construct effect Euler sums explicitly at general depth.

\bigskip

\appendix

\renewcommand{\thesection}{Appendix.}
\section{Heuristic Approach to Effective Double Euler Sums}

\begin{center}
by Ryan Shi and Jianqiang Zhao
\end{center}

\renewcommand{\thesection}{A}

\bigskip

We now pursue the question of constructing effective Euler sums from a complementary direction in the case of depth two and even weight, by giving another family of plausible effective Euler sums, motivated by the heuristic argument of Kaneko and Zagier for double zeta values in \cite{KanekoZa2026}. The argument suggests that one should regard the finite/classical correspondence not only as a map between already-established vector spaces, but as a mechanism for transporting the classical series definition into the finite setting. Extending this heuristic to Euler sums leads to candidates that are different in appearance from the formulas obtained in the main body of the paper, while exhibiting the same expected behavior. These candidates provide additional evidence for the conjectural correspondence and may also point toward a more conceptual construction of effective representatives in arbitrary depth.

\subsection{Effective double zetas}
We first recall Kaneko and Zagier's definition in the double zeta case.

\begin{defn} \label{defn:KZdefn} (\cite[\S 5]{KanekoZa2026})
For all $l\ge 0$ and $i\ge 1$, set
\begin{equation*}
    \gb(l,i):=-\frac{1}{l+1} \binom{l+1}{i} B_{l+1-i}.
\end{equation*}
For $a\ge 2$ and $b\ge 1$, let $v=a+b-1$. If $a+b$ is \emph{even} then we can define
\begin{equation*}
Z_\calA(a,b):=\big(Z_p(a,b) \pmod{p}\big)_{p\in \calP,p>a+b}
\end{equation*}
where for all primes $p$
\begin{align*}
Z_p(a,b):=&\, \sum_{i=1}^{a-2} (-1)^b \binom{b+i}{b}Z_p^\eff(b+i)Z_p^\eff(a-i)+\sum_{j=b-1}^{p-1-a} \gb(j,b)Z_p^\eff(a+j)-Z_p^\eff(a+b)\\
&\, +(-1)^b \binom{v}{b}\Big[(H_{a-1}-H_{v}+\gamma_\calA )Z_\calA^\eff(v) - Z'_\calA(v)\Big].
\end{align*}
Here $\gamma_\calA=(\gamma_p )_{p\in\calP}$ is the finite analog of Euler's gamma constant where
\begin{equation*}
    \gamma_p \equiv \frac{(p-1)!+1}{p} \pmod{p},
\end{equation*}
and
\begin{equation}\label{equ:Zder}
    Z'_\calA(v):=\left(\frac1{p}\left( \frac{B_{2p-1-v}}{2p-1-v}-\frac{B_{p-v}}{p-v} \right)\mod p\right)_{p\in\calP}\in \calA.
\end{equation}
\end{defn}

We have noticed that Kina \cite{Kina2026} recently obtained independently some the same results in the main body of this paper in the MZV setting via a completely different approach. Using one of his results we now prove that Kaneko and Zagier's heuristically defined effective double zeta values coincide with ours.

\begin{thm}\label{thm:KZversion=mine}
For all $a,b\in\N$ with $a+b$ \emph{even}, we have
\begin{equation*}
Z_\calA(a,b)=Z_\calA^\eff(a,b).
\end{equation*}
\end{thm}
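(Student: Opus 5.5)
The plan is to show that both families satisfy the same uniquely solvable linear system, namely the system of Lemma~\ref{lem:linAlg} with $A=2$, $B=0$.

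For our effective values $Z_\calA^\eff(a,b)$ this is already in hand. By construction, \eqref{equ:KZrel3FMZversion} and \eqref{equ:KZrel3FMZversion2} say exactly that $x_s=Z_\calA^\eff(s,w-s)$ has the form \eqref{equ:FormalRel3}--\eqref{equ:FormalRel4}. Here
\[
\psi_a=\Psi_a=\zeta_\calA(a,1,w-a-1)-Z_\calA^\eff(a)Z_\calA^\eff(w-a).
\]
By the converse part of Lemma~\ref{lem:linAlg}, these $x_s$ satisfy \eqref{equ:FormalRel1} and \eqref{equ:FormalRel2}. Conversely, the forward direction of the lemma shows that any family satisfying \eqref{equ:FormalRel1}--\eqref{equ:FormalRel2} with the same $\psi_a$ is uniquely determined. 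So it suffices to prove that Kaneko--Zagier's $Z_\calA(s,w-s)$ satisfies \eqref{equ:KZrel1} and \eqref{equ:KZrel2}.

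Note that $Z_p^\eff$ and $Z_\calA^\eff$ for single values agree by \eqref{equ:effRiemann}. Hence $\Psi_s$ in Conjecture~\ref{conj:KZeffDBZs} coincides with the $\Psi_s$ above, so this reduction is legitimate.

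The two relations \eqref{equ:KZrel1}--\eqref{equ:KZrel2} are precisely what Kaneko and Zagier conjectured in \cite[(65)]{KanekoZa2026} for their heuristic values. This is where I would invoke Kina \cite{Kina2026}. My understanding is that he proves, unconditionally and directly from Definition~\ref{defn:KZdefn}, that the Kaneko--Zagier values satisfy these relations (equivalently, an explicit formula expressing $Z_\calA(a,b)$ via triple finite MZVs $\zeta_\calA(s,1,w-s-1)$ and products of $Z_\calA^\eff$'s).
\begin{itemize}
\item If Kina's result is stated as \eqref{equ:KZrel3} itself, with the same coefficients $\binom{s}{a}-\frac12\binom{w-1}{a}$, then comparing with \eqref{equ:KZrel3FMZversion} gives equality for $1\le a\le w-2$ at once.
\item For $a=w-1$, it then remains to check \eqref{equ:KZrel2} for $Z_\calA$, i.e.\ $Z_\calA(w-1,1)=-\frac12\sum_{a\le w-2}Z_\calA(a,w-a)$. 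That matches \eqref{equ:KZrel3FMZversion2}.
\end{itemize}

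The main obstacle is the normalization when matching Kina's statement to ours. I expect the difficulties to be the index range $1\le a\le w-2$ against $a\ge2$ in Definition~\ref{defn:KZdefn}, how $Z_\calA(1,w-1)$ is defined, and the possible appearance of $\gamma_\calA$ and $Z'_\calA(v)$ terms.

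For $Z_\calA(1,w-1)$, I would use the stuffle-type identity $Z_\calA(1,w-1)+Z_\calA(w-1,1)=Z_\calA^\eff(1)Z_\calA^\eff(w-1)=0$, matching Proposition~\ref{prop:doubleZetaProperties}(i).

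The terms $\gamma_\calA$ and $Z'_\calA(v)$ carry the coefficient $\binom{v}{b}$ with $v$ odd. For these, I would first try to check that they cancel in the combinations entering \eqref{equ:KZrel1}. Failing that, I would verify that Kina's formula already absorbs them, confirming with numerical checks for small $w$ and primes $p$.
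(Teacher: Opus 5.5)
Your reduction is logically sound. Since $A=2\neq 0$ in Lemma~\ref{lem:linAlg}, the values $Z_\calA(s,w-s)$ are determined by \eqref{equ:KZrel1}--\eqref{equ:KZrel2}, and $Z_\calA^\eff(s,w-s)$ satisfy these relations by Theorem~\ref{thm:KZeffDBZs} and Corollary~\ref{cor:KZeffDBZs}. But the statement you reduce to is Kaneko--Zagier's Conjecture~\ref{conj:KZeffDBZs} for their heuristic values, and given the paper's results on $Z_\calA^\eff$ it is \emph{equivalent} to the theorem. You have replaced the theorem by an equivalent statement and then assumed it on the strength of ``my understanding'' of Kina's paper. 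You also leave open exactly the points that make it hard.

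In particular, the $\gamma_\calA$ and $Z'_\calA$ terms do not cancel formally. For $w=4$, $a=2$, the left side of \eqref{equ:KZrel1} is $Z_\calA(2,2)+3Z_\calA(3,1)$. In it, $\gamma_\calA Z_\calA^\eff(3)-Z'_\calA(3)$ appears with coefficient $3+3\cdot(-3)=-6$, while $\Psi_2=\zeta_\calA(2,1,1)$ involves no such term. So the relation is a genuine congruence linking the Bernoulli convolution sums $\sum_j \gb(j,b)Z_p^\eff(a+j)$ with the Wilson quotient and the derivative term, and numerical checks cannot replace it. Likewise, the identity $Z_\calA(1,w-1)+Z_\calA(w-1,1)=0$ that you use for the Kaneko--Zagier values is itself an unproved stuffle relation for those values.

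The paper needs much less from Kina: only the single identity $Z_\calA(a,b)+Z_\calA^\eff(b,a)=Z_\calA^\eff(a)Z_\calA^\eff(b)$, which already absorbs all the $\gamma_\calA$, $Z'_\calA$ and Bernoulli-convolution terms. It combines this with the stuffle relation for the \emph{effective} values, Proposition~\ref{prop:doubleZetaProperties}(i), which the paper proves from Lemma~\ref{lem:triple=0} and \cite[Cor.~5.9]{BachmannRisan2026}. This gives $Z_\calA(a,b)=Z_\calA^\eff(a)Z_\calA^\eff(b)-Z_\calA^\eff(b,a)=Z_\calA^\eff(a,b)$ at once. That stuffle relation for $Z_\calA^\eff$ is the ingredient your plan is missing; you mention it only in passing, and apply its analogue to the wrong family.

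Note also that the logic in the paper runs opposite to yours. Kaneko--Zagier's relations \eqref{equ:KZrel1}--\eqref{equ:KZrel3} for $Z_\calA$ are a \emph{consequence} of this theorem together with Theorem~\ref{thm:KZeffDBZs} and Corollary~\ref{cor:KZeffDBZs}, not an input to it.
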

\begin{proof}
In \cite{Kina2026}, Kina proved that
\begin{equation*}
Z_\calA(a,b)+Z_\calA^\eff(b,a)=Z_\calA^\eff(a)Z_\calA^\eff(b).
\end{equation*}
The theorem now follows from the stuffle relation Prop.~\ref{prop:doubleZetaProperties}(i).
\end{proof}

\subsection{Effective double Euler sums of types $(\ol{a},b)$ and $(a,\ol{b})$}
We now adapt the above ideas to other double Euler sums.

\begin{defn}\label{defn:zBar12}
Suppose $a,b\in\N$ with $a+b$ \emph{even}. Define $Z_\calA(\ol{a},b)=\big(Z_p(\ol{a},b) \pmod{p}\big)_{p\in\calP}$
where for all primes $p$
\begin{equation*}
Z_p(\ol{a},b):=\sum_{i=1}^{a-1} (-1)^b \binom{b+i}{b} Z_p^\eff(b+i)Z_p^\eff(\ol{a-i})  +\sum_{j=b-1}^{p-1-a} \gb(j,b)Z_p^\eff(\ol{a+j})-Z_p^\eff(\ol{a+b}).
\end{equation*}
Further, define
\begin{equation*}
Z_p(a,\ol{b}):= Z_p^\eff(a)Z_p^\eff(\ol{b})-Z_p(\ol{b},a).
\end{equation*}
\end{defn}

\begin{conj} \label{conj:zBar12}
Suppose $a,b\in\N$ with $a+b$ \emph{even}. Then $Z_\calA(\ol{a},b)=Z_\calA^\eff(\ol{a},b)$ and $Z_\calA(a,\ol{b})=Z_\calA^\eff(a,\ol{b})$.
\end{conj}

We provide the following heuristic argument for the correctness of the definition for $Z_p(\ol{a},b)$. Then the definition for $Z_p(a,\ol{b})$ follows easily from the stuffle relation since $Z_p^\eff(\ol{a+b})=0$.
Recall the Seki-Bernoulli formula can be used to express the sum of powers
\begin{equation*}
S_l(n):=\sum_{j=1}^{n} j^l=\sum_{i=1}^{l+1} (-1)^{l-i} \gb(l,i) n^i
\end{equation*}
for all $l,n\in\N$. By \cite[(60)]{KanekoZa2026} we see that ``modulo prime $p$''
\begin{align*}
  \zeta^\star (\ol{a},b) =\phantom{'} &\, \sum_{n=1}^\infty \sum_{j=1}^{n} \frac{(-1)^n}{n^a j^b}\\
 \fkeqv &\, \sum_{n=1}^\infty  (-1)^n n^{-a} \sum_{j=1}^{n}  j^{p-1-b}   \\
  \equiv\phantom{'}&\, \sum_{n=1}^\infty (-1)^n \sum_{i=1}^{p-b} \gb(p-1-i,b) n^{i-a}  \\
  \equiv\phantom{'}&\, \sum_{n=1}^\infty (-1)^n\left( \sum_{i=1}^{a-1} \gb(p-1-i,b) n^{i-a}
  + \sum_{j=b-1}^{p-1-a} \gb(j,b) n^{p-1-j-a}  \right)  \\
  \fkeq &\,  \sum_{i=1}^{a-1} \gb(p-1-i,b) \zeta (\ol{a-i})   + \sum_{j=b-1}^{p-1-a} \gb(j,b) \zeta\big(\ol{1-(p-a-j)}\big)
\end{align*}
where, for $n>0$, we understand $\zeta(\ol{-n})$ as the value of the analytically continued $\zeta(\ol{s})=(2^{1-s}-1)\zeta(s)$ at $s=-n$.
As $\zeta(1-n)=-B_n/n$ for all $n>1$ and
\begin{equation*}
\gb(p-1-i,b)=-\frac{1}{p-i} \binom{p-i}{b} B_{p-b-i}\equiv (-1)^b \binom{b+i}{b} \frac{B_{p-b-i}}{b+i} \pmod{p},
\end{equation*}
we are led to Definition~\ref{defn:zBar12} immediately.

\end{document}